\newtheorem{theorem}{Theorem}
\newtheorem{lemma}{Lemma}
\newtheorem{definition}{Definition}
\newtheorem{proposition}{Proposition}
\newtheorem{corollary}{Corollary}
\theoremstyle{remark}
\newtheorem{remark}{Remark}
\theoremstyle{definition}
\renewcommand{\hat}{\widehat}
\newenvironment{enum_H}
  {%
  \setlength{\leftmargini}{4em}\begin{enumerate}}
  {\end{enumerate}}
\newenvironment{enum_C}
  {%
  %
  \setlength{\leftmargini}{2em}
  \begin{enumerate}}
  {\end{enumerate}}
\newenvironment{enum_A}
  {%
  \setlength{\leftmargini}{4em}\begin{enumerate}}
  {\end{enumerate}}
\def\rmd{\mathrm{d}}
\def\rme{\mathrm{e}}
\def\rmi{\mathrm{i}}
\def\cl{\mathop{\stackrel{\mathcal{L}}{\longrightarrow}}}
\def\cp{\mathop{\stackrel{\mathbbm{P}}{\longrightarrow}}}
\def\1{\mathbbm{1}}
\def\bS{\mathbf{S}}
\def\bZ{\mathbf{Z}}
\def\bg{\mathbf{g}}
\def\bc{\mathbf{c}}
\def\bY{\mathbf{Y}}
\def\Zset{\mathbb{Z}}
\def\Rset{\mathbb{R}} 
\def\Cset{\mathbb{C}} 
\def\PE{\mathbb{E}} 
\def\PVar{\mathrm{Var}}
\def\PCov{\mathrm{Cov}}
\def\calN{\mathcal{N}}
\def\eg{\textit{e.g.} }
\newcommand{\eqdef}{\ensuremath{\stackrel{\mathrm{def}}{=}}}
\newcommand{\AVvar}[3][]
{
\ifthenelse{\equal{#1}{}}{\mathbf{V}_{#3}(#2)}{\mathbf{V}_{#3}(#2,#1)}}
\newcommand{\AVvarJoint}[3][]
{
\ifthenelse{\equal{#1}{}}{\mathbf{\Lambda}_{#3}(#2)}{\mathbf{\Lambda}_{#3}(#2,#1)}}
\newcommand{\AsympVarWWE}[2][]
{\rho^2(#2)}
\newcommand{\AVvarInv}[3][]
{\ifthenelse{\equal{#1}{}}{\mathbf{V}^{-1}_{#3}(#2)}{\mathbf{V}^{-1}_{#3}(#2,#1)}}
\newcommand{\sigmaasymp}[2][]
{
\ifthenelse{\equal{#1}{}}{\sigma(#2)}{\sigma(#2,#1)}}
\def\vjsymb{\sigma}
\newcommand{\vj}[4][]{%
\ifthenelse{\equal{#1}{}}{\vjsymb^{#4}_{#2}}{\vjsymb^{#4}_{#2}(#3,#1)}}
\newcommand{\stdj}[3][]{%
\ifthenelse{\equal{#1}{}}{\vjsymb_{#2}}{\vjsymb_{#2}(#3,#1)}}
\newcommand{\hvj}[3][]{%
\ifthenelse{\equal{#1}{}}{\hat{\vjsymb}^2_{#2}}{\hat{\vjsymb}^2_{#2}(#1)}}
\newcommand{\hvjLIM}[3][]{%
\ifthenelse{\equal{#1}{}}{Q^{(#2)}_{#3}}{Q^{(#2)}_{#3}(#1)}}
\def\poldecrease{h}
\def\vseq{v}
\def\vfonc{v^\ast}
\def\aseq{a}
\def\afonc{a^\ast}
\def\vfoncsym{w^\ast}
\def\limcons{\mathrm{C}}
\def\vphase{\Phi}
\def\dimV{N}
\def\decim{\gamma}
\def\localset{\mathrm{L}_{0}}
\title[CLT for arrays of decimated linear processes] 
{Central Limit Theorems for arrays of decimated linear processes} 
\author{F. Roueff}
\address{TELECOM ParisTech, INSTITUT Télécom, CNRS LTCI, 46, rue Barrault, 75634 Paris Cédex 13, France.}
\email{roueff@tsi.enst.fr}
\author{M.S. Taqqu}
\address{Department of Mathematics and Statistics, Boston University Boston, MA 02215, USA.}
\email{murad@math.bu.edu}
\subjclass{Primary 62M10, 62M15, 62G05 Secondary: 60G18.}
\keywords{Spectral analysis, Wavelet analysis, long range dependence, semiparametric estimation.}
\date{May 6, 2008}
\thanks{Murad~S.~Taqqu would like to thank l'\'Ecole Normale Sup\'erieure des T\'elecom\-munications in Paris
for their hospitality.  This research was partially supported by the NSF Grants DMS--0505747 and DMS--0706786 at Boston University.}
\begin{document}

\maketitle

\begin{center}
{\it TELECOM ParisTech and Boston University}\\
\end{center}
\renewcommand{\thefootnote}{}
\footnote{\textit{Corresponding author}: F. Roueff,  TELECOM ParisTech, 46, rue Barrault, 75634 Paris Cédex 13, France.}

\begin{abstract}
Linear processes are defined as a discrete-time convolution between a kernel and an infinite sequence of i.i.d. random
variables. We modify this convolution by introducing decimation, that is, by stretching time accordingly. We then establish
central limit theorems for arrays of squares of such decimated processes. These theorems are used to obtain the asymptotic
behavior of estimators of the spectral density at specific frequencies. Another application, treated elsewhere, concerns the
estimation of the long-memory parameter in time-series, using wavelets.
\end{abstract}


\tableofcontents

\section{Introduction}


Consider a linear process, that is, a weakly stationary sequence
$$
\sum_{t\in\Zset}\vseq(k-t)\xi_k,\quad k\in\Zset,\quad\text{where}\quad\sum_{t\in\Zset}\vseq^2(t)<\infty
$$
and $\{\xi_t, \;t\in\Zset\}$ is a centered white noise sequence, that is an uncorrelated sequence with mean zero. 
We shall sometimes make the following additional assumptions on $\{\xi_t, \;t\in\Zset\}$.

\medskip
\noindent\textbf{Assumptions}~A

\begin{enum_A}
\item\label{it:A1-2moments} $\{\xi_t,\,t\in\Zset\}$ is a sequence of independent and identically distributed  real-valued
  random variables such that  $\PE[\xi_0]=0$, $\PE[\xi_0^2]=1$.
\item\label{it:A1} $\{\xi_t,\,t\in\Zset\}$ satisfies~\ref{it:A1-2moments} and $\kappa_4\eqdef\PE[\xi_0^4]-3$ is finite. 
\end{enum_A}

\medskip

\noindent We will allow decimation and consider, moreover, not one but $N$ linear sequences, all using the same $\{\xi_t,
\;t\in\Zset\}$.
\begin{definition}\label{def:decimatedlinearprocess}
An array of $\dimV$--dimensional decimated linear processes is a process admitting the following linear representation, 
\begin{equation} 
\label{eq:Zdef}
Z_{i,j,k} = \sum_{t\in\Zset} \vseq_{i,j}(\decim_jk-t) \, \xi_t  \;,\quad i=1,\dots,\dimV,\,k\in\Zset,\,j\geq0\;,
\end{equation}
where $\{\xi_t,\;t\in\Zset\}$ is a centered weak white noise, $(\decim_j)_{j\geq0}$ is a diverging sequence of positive integers 
and, for all $i=1,\dots,\dimV$ and $j\geq0$,
$\{\vseq_{i,j}(t),\;t\in\Zset\}$ is  real-valued and satisfies $\sum_{t\in\Zset} \vseq_{i,j}^2(t)<\infty$.
\end{definition}

\begin{remark}
$Z_{i,j,k}$ involves three indices. The index $i=1,\dots,\dimV$ is used to define an $\dimV$-variate version,  the index $j$
labels the decimation factor $\decim_j$, and the index $k$
corresponds to time. 
Because of the presence of the factor $\decim_j$ in~(\ref{eq:Zdef}), $Z_{i,j,k}$
is not a usual convolution.
It can be viewed as a decimated convolution of a white noise in the sense that, after convolution, one keeps only
values spaced by $\decim_j$. A typical choice of decimation is $\decim_j=2^j$, $j\geq0$.
\end{remark}
Our goal is to study the asymptotic behavior of the sample mean square of $Z_{i,j,k}$, namely to find
conditions on the kernels $\vseq_{i,j}$, the decimation factor $\decim_j$ and normalization $n_j$, so that 
the normalized vector 
$$
\left\{n_j^{-1/2}\sum_{k=0}^{n_j-1}\left(Z_{i,j,k}^2-\PE\left[Z_{i,j,k}^2\right]\right),\;i=1,\dots,\dimV\right\}
$$
converges to a multivariate normal $\calN(0,\Gamma)$ distribution. We want also to characterize the limiting covariance
matrix $\Gamma$. Thus we are interested in the sum of squares of the $Z_{i,j,k}$.

Such results are useful in estimation. In section~\ref{sec:appl-non-param}, for example, we apply our result to obtain a
central limit theorem for the estimator $\hat{f}_n(0)$ of the spectral density at the origin $f(0)$ of a linear process. This
CLT is compared to~\cite{bhansali:giraitis:kokoszka:2007}, Eq~(3.9), as discussed in Remark~\ref{rem:compareGiraitisEtAl}.
Another, more involved application, which involves wavelets, can be found in~\cite{roueff:taqqu:2008b}.

The paper is structured as follows. In Section~\ref{sec:main-assumptions}, we indicate the main assumptions. The central
limit theorems (Theorems~\ref{thm:CLTlinear} and~\ref{thm:CLTlinearLocale}) for decimated sequences are stated in
Section~\ref{sec:centr-limit-theor}. Section~\ref{sec:appl-non-param} contains an application to the estimation of the value
of spectral density at the origin (Theorem~\ref{thm:appl-spectr-dens}). 
Section~\ref{sec:technical-lemmas} contains technical lemmas. Theorems~\ref{thm:CLTlinear}, ~\ref{thm:CLTlinearLocale}
and~\ref{thm:appl-spectr-dens} are proved in Sections~\ref{sec:proof-theor-clt},~\ref{sec:proof-theor-cltLocal}
and~\ref{sec:proof-theor-appli} respectively.



\section{Main assumptions}
\label{sec:main-assumptions}

Our assumptions will be expressed in terms of the Fourier series of the $\ell^2$ sequences $\{\vseq_{i,j}(t),\;t\in\Zset\}$, namely
\begin{equation}
\label{eq:vfoncdef}
\vfonc_{i,j}(\lambda)=(2\pi)^{-1/2}\,\sum_{t\in\Zset} \vseq_{i,j}(t)\,\rme^{-\rmi\lambda t} \; .
\end{equation}
We suppose that for any $i=1,\dots,\dimV$, as $j\to\infty$, the Fourier series $\vfonc_{i,j}$ concentrates around some
frequency $\lambda_{i,\infty}\in[0,\pi)$. By ''concentrate'', we mean that when adequately normalized, translated
and rescaled around these frequencies, the series  $\vfonc_{i,j}$ converges as $j\to\infty$ to some limit functions $\vfonc_{i,\infty}$, with a uniform
polynomial control (see Eq~(\ref{eq:unfiBoundvfonc}) and~(\ref{eq:Limitvfonc}) below).
Because of the particular structure of the $\decim_j$--decimation in~(\ref{eq:Zdef}), however,
in order to derive the asymptotic behavior for the processes,  
we need to introduce
sequences of frequencies $(\lambda_{i,j})_{j\geq0}$  that satisfy some special conditions 
and converge to $\lambda_{i,\infty}$ for all  $i=1,\dots,\dimV$. We shall first specify the conditions on the  Fourier
series $\vfonc_{i,j}$, the frequencies $\lambda_{i,\infty}$ and  
the limit functions $\vfonc_{i,\infty}$, and then comment on these conditions.

\medskip

\noindent\textbf{Condition}~C

\begin{enum_C}
\item \label{item:FourierConditions}
There exist a $\dimV$--dimensional array of frequencies $(\lambda_{i,j})_{i\in1,\dots,\dimV,j\geq0}$
valued in $[0,\pi)$ such that, for all $i=1,\dots,\dimV$,
\begin{align}
  \label{eq:integerCondition}
  \decim_j\lambda_{i,j} \in 2\pi\Zset_+,\quad\text{for $j$ large enough}\;,
\end{align}
\begin{equation}
  \label{eq:Limitfreq}
  \lambda_{i,j}\to\lambda_{i,\infty}\;,\quad\text{as $j\to\infty$}\;,
\end{equation}
\begin{align}
  \label{eq:CondFreqEqual0orDifferent} 
\text{if}\quad\lambda_{i,\infty}=0 \;,\quad\text{then}\quad \lambda_{i,j}=0,\quad\text{for $j$ large enough}\;.
\end{align}
and,  for all $1\leq i < i'\leq\dimV$, 
\begin{align}
  \label{eq:CondFreqEqualOrDifferent} 
\text{if}\quad\lambda_{i,\infty}=\lambda_{i',\infty}\;,\quad\text{then}\quad \lambda_{i,j}=\lambda_{i',j},\quad\text{for $j$ large enough}\;.
\end{align}
\item Moreover there exist $\delta>1/2$ and a sequence of $[-\pi,\pi)$-valued functions $\vphase_j(\lambda)$ defined on $\lambda\in\Rset$ such that    
\begin{align}
\label{eq:unfiBoundvfonc}
&\sup_{j\geq0} \sup_{\lambda\in[0,\pi)} \decim_j^{-1/2}|\vfonc_{i,j}(\lambda)|(1+\decim_j|\lambda-\lambda_{i,j}|)^\delta < \infty \;, \\
\label{eq:Limitvfonc}
&\lim_{j\to\infty}\decim_j^{-1/2}\vfonc_{i,j}(\decim_j^{-1}\lambda+\lambda_{i,j})\rme^{\rmi\vphase_j(\lambda)} = \vfonc_{i,\infty}(\lambda) \quad\text{for all}\quad
\lambda\in\Rset\;,
\end{align}
\end{enum_C}

\medskip

\noindent The following remarks provide some insight into these conditions.


\begin{remark}
Equations~(\ref{eq:Limitfreq}) and~(\ref{eq:Limitvfonc}) imply that the spectral density
$\lambda\mapsto|\vfonc_{i,j}|^2(\lambda)$ of the \emph{undecimated} stationary process
$$
W_{i,j,k}= \sum_{t\in\Zset} \vseq_{i,j}(k-t) \, \xi_t  \;,\quad k\in\Zset\;,
$$
concentrates, as $j\to\infty$ around the frequency $\lambda_{i,\infty}$. In practical applications of the theorem, the limiting
frequencies $\{\lambda_{i,\infty},\;i\in1,\dots,\dimV\}$ are given. However, one can often easily find sequences $(\decim_j)_{j\geq0}$
and $(\lambda_{i,j})_{j\geq0}$ that satstisfy Conditions~(\ref{eq:integerCondition}) and~(\ref{eq:Limitfreq}) hold. 
In the particular case where the $\lambda_{i,\infty}$ are such
 that 
$q\,\lambda_{i,\infty}\in2\pi\Zset$ for all $i=1,\dots,\dimV$ and some positive integer $q$, one  
may take $\lambda_{i,j}=\lambda_{i,\infty}$ and $\decim_j$ as a multiple of $q$. This happens for instance when the limiting
frequencies are all at the origin, that is,
$\lambda_{1,\infty}=\dots=\lambda_{\dimV,\infty}=0$ and $\decim_j=2^{j}$. 
\end{remark}
\begin{remark}
  The presence of the phase function $\vphase_j$ in~(\ref{eq:Limitvfonc}) offers flexibility and implies  that 
$\decim_j^{-1/2}\vfonc_{i,j}(\decim_j^{-1}\lambda+\lambda_0)$ converges to $\vfonc_{i,\infty}(\lambda)$ \emph{up to a common change of phase}. 
Observe, however, that $\vphase_j$ should not depend on $i$ and thus,  for $\dimV>1$, Condition~(\ref{eq:Limitvfonc}) is not equivalent to
requiring that $\decim_j^{-1/2}|\vfonc_{i,j}(\decim_j^{-1}\lambda+\lambda_{i,j})|$ converges to
$|\vfonc_{i,\infty}(\lambda)|$ for all $i$. 
The presence of the phase $\vphase$ is consistent with the fact that the asymptotic covariance matrix $\Gamma$ defined
in~(\ref{eq:GammaDef}) is invariant through a common phase translation of the functions $\vfonc_{i,\infty}$ for all
$i=1,\dots,\dimV$. 
\end{remark}
\begin{remark}
Condition~(\ref{eq:CondFreqEqualOrDifferent}) states that if two limits $\lambda_{i,\infty}$ and  $\lambda_{i',\infty}$ are
equal, then the $\lambda_{i,j}$ and  $\lambda_{i',j}$ which converge to them must coincinde for large enough
$j$. Condition~(\ref{eq:CondFreqEqual0orDifferent}) has a similar interpretation. 
\end{remark}
\begin{remark}
Conditions~(\ref{eq:CondFreqEqualOrDifferent}) and~(\ref{eq:Limitvfonc}) imply that, for all $1\leq i\leq i'\leq\dimV$ such 
that $\lambda_{i,\infty}=\lambda_{i',\infty}$, 
\begin{equation}
\label{eq:LimitvfoncAltern}
\lim_{j\to\infty}
\decim_j^{-1}[\vfonc_{i,j}\overline{\vfonc_{i',j}}](\decim_j^{-1}\lambda+\lambda_{i,j}) 
= [\vfonc_{i,\infty}\overline{\vfonc_{i',\infty}}](\lambda) \quad\text{for all}\quad
\lambda\in\Rset\;.
\end{equation}
Here $\overline{z}$ denotes the conjugate of the complex $z$.
\end{remark}
\begin{remark}
Since $\vseq_{i,j}(t)$ is real valued, since $\vseq_{i,j}(t)$ is real-valued, we have
\begin{equation}
\label{eq:symVfonc}
\vfonc_{i,j}(-\lambda)=\overline{\vfonc_{i,j}(\lambda)} \; .
\end{equation} 
Thus, Conditions~(\ref{eq:unfiBoundvfonc}) and~(\ref{eq:LimitvfoncAltern}) imply that  
\begin{align}
\label{eq:unfiBoundvfoncSym}
&\sup_{j\geq0} \sup_{\lambda\in(-\pi,\pi)} \decim_j^{-1/2}|\vfonc_{i,j}(\lambda)|(1+\decim_j\left||\lambda|-\lambda_{i,j}\right|)^\delta < \infty \;, \\
\label{eq:LimitvfoncSym}
&
\lim_{j\to\infty}\decim_j^{-1}[\vfonc_{i,j}\overline{\vfonc_{i',j}}](\decim_j^{-1}\lambda-\lambda_{i,j}) 
= [\overline{\vfonc_{i,\infty}}\vfonc_{i',\infty}](-\lambda) 
\quad\text{for all}\quad \lambda\in\Rset\;.
\end{align}
In particular, if $\lambda_{i,\infty}=\lambda_{i',\infty}=0$,
by~(\ref{eq:CondFreqEqual0orDifferent}),~(\ref{eq:LimitvfoncAltern}) and~(\ref{eq:LimitvfoncSym}), we have 
\begin{equation}
  \label{eq:symetryFreqZero}
  [\vfonc_{i,\infty}\overline{\vfonc_{i',\infty}}](\lambda)=
[\overline{\vfonc_{i,\infty}}\vfonc_{i',\infty}](-\lambda) \;.
\end{equation}
\end{remark}
\begin{remark}
Since $(\decim_j)$ is a diverging sequence and $\lambda_{i,j}\to\lambda_{i,\infty}\in[0,\pi)$, for any $\lambda\in\Rset$, for
$j$ large enough, we have $\decim_j^{-1}\lambda+\lambda_{i,j}\in[0,\pi)$. 
Hence Conditions~(\ref{eq:unfiBoundvfonc}) and~(\ref{eq:Limitvfonc}) imply that, for all $i=1,\dots,\dimV$,  
  \begin{equation}
    \label{eq:LimitvfoncBound}
  \sup_{\lambda\in\Rset}  \left|\vfonc_{i,\infty}(\lambda)\right|\; (1+|\lambda|)^{\delta} < \infty \;.
  \end{equation}
\end{remark}
To better understand these assumptions, we start with a result on the asymptotic behavior of the cross-covariance function 
for the array~(\ref{eq:Zdef}). In this proposition, we set, without loss of generality, $\dimV=2$. 

\begin{proposition}\label{prop:Limcov}
Let $\{Z_{i,j,k},\;i=1,2, j\geq0, k\in\Zset\}$ be an array of $2$--dimensional decimated linear processes as defined by~(\ref{eq:Zdef}).
Assume that Condition~\ref{item:FourierConditions} holds for some $\lambda_{i,\infty}\in[0,\pi)$ and 
functions $\vfonc_{i,\infty}$, $i=1,2$, from $\Rset\to\Zset$. Then, for all $k,k'\in\Zset$, as $j\to\infty$,
\begin{equation}
  \label{eq:Limcov-neq1}
  \PCov\left(Z_{1,j,k},Z_{2,j,k'}\right)  \to \limcons_{1,2}\;
\int_{-\infty}^{\infty}\vfoncsym_{1,2}(\lambda)\;\rme^{\rmi\lambda(k'-k)}\,\rmd\lambda\;,
\end{equation}
where, for any $i,i'\in\{1,2\}$,
\begin{equation}
  \label{eq:vfoncsymDef}
  \vfoncsym_{i,i'}(\lambda)=
\frac12\left[\overline{\vfonc_{i,\infty}(-\lambda)}\vfonc_{i',\infty}(-\lambda)
+\vfonc_{i,\infty}(\lambda)\overline{\vfonc_{i',\infty}(\lambda)}\right],\quad\lambda\in\Rset\;,
\end{equation}
and 
\begin{equation}
  \label{eq:LimConsDef}
\limcons_{i,i'}=\begin{cases}
0&\text{ if $\lambda_{i,\infty}\neq\lambda_{i',\infty}$}\\
1&\text{ if $\lambda_{i,\infty}=\lambda_{i',\infty}=0$}\\
2&\text{ if $\lambda_{i,\infty}=\lambda_{i',\infty}>0$}
\end{cases}\;.
\end{equation}
\end{proposition}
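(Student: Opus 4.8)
The plan is to reduce $\PCov(Z_{1,j,k},Z_{2,j,k'})$ to a single spectral integral over $[-\pi,\pi)$ and then to rescale around the concentration frequencies. Since $\{\xi_t\}$ is a centered white noise, $\PE[\xi_t\xi_{t'}]=\1_{\{t=t'\}}$, so
\[
\PCov(Z_{1,j,k},Z_{2,j,k'})=\sum_{t\in\Zset}\vseq_{1,j}(\decim_jk-t)\,\vseq_{2,j}(\decim_jk'-t)=\sum_{s\in\Zset}\vseq_{1,j}(s)\,\vseq_{2,j}\bigl(s-\decim_j(k-k')\bigr),
\]
and, by Parseval applied to the $\ell^2$ sequences $\vseq_{i,j}$, using~(\ref{eq:vfoncdef}) together with $\overline{\vfonc_{2,j}(\lambda)}=\vfonc_{2,j}(-\lambda)$ (the kernels being real),
\[
\PCov(Z_{1,j,k},Z_{2,j,k'})=\int_{-\pi}^{\pi}\vfonc_{1,j}(\lambda)\,\overline{\vfonc_{2,j}(\lambda)}\,\rme^{\rmi\decim_j(k-k')\lambda}\,\rmd\lambda .
\]
By~(\ref{eq:unfiBoundvfoncSym}) the modulus of the integrand is bounded, up to a constant, by $\decim_j\,(1+\decim_j||\lambda|-\lambda_{1,j}|)^{-\delta}(1+\decim_j||\lambda|-\lambda_{2,j}|)^{-\delta}$; since $\decim_j\to\infty$ and $\lambda_{i,j}\to\lambda_{i,\infty}$, the mass of the integral concentrates in shrinking neighbourhoods of $\pm\lambda_{1,j}$ and $\pm\lambda_{2,j}$.

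\emph{The case $\lambda_{1,\infty}\neq\lambda_{2,\infty}$.} Here $\lambda_{1,j}$ and $\lambda_{2,j}$, both in $[0,\pi)$, stay at distance $\geq c>0$ for $j$ large. Splitting $[-\pi,\pi)$ into the set where $||\lambda|-\lambda_{1,j}|\geq c/2$ and its complement (on which then $||\lambda|-\lambda_{2,j}|\geq c/2$), one of the two factors above is $O(\decim_j^{-\delta})$ uniformly, while $\decim_j$ times the integral of the other over $[-\pi,\pi)$ is $O(\decim_j^{\,1-(\delta\wedge1)}\log\decim_j)$; the product is $o(1)$ because $\delta>1/2$. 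Hence the limit is $0=\limcons_{1,2}$.

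\emph{The case $\lambda_{1,\infty}=\lambda_{2,\infty}=:\lambda_\infty$.} By~(\ref{eq:CondFreqEqualOrDifferent}), $\lambda_{1,j}=\lambda_{2,j}=:\lambda_j$ for $j$ large (with $\lambda_j=0$ if $\lambda_\infty=0$, by~(\ref{eq:CondFreqEqual0orDifferent})). Suppose first $\lambda_\infty>0$. Fix $\delta_0\in(0,\min(\lambda_\infty,\pi-\lambda_\infty))$, so that the $\delta_0$-neighbourhoods of $\lambda_j$ and of $-\lambda_j$ in $[-\pi,\pi)$ are disjoint and do not overlap modulo $2\pi$, and write $\int_{-\pi}^{\pi}=\int_{|\lambda-\lambda_j|<\delta_0}+\int_{|\lambda+\lambda_j|<\delta_0}+\int_{\mathrm{rest}}$; the remainder is $O(\decim_j^{1-2\delta})\to0$ by the bound above. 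On the first piece substitute $\lambda=\decim_j^{-1}\theta+\lambda_j$: then $\rmd\lambda=\decim_j^{-1}\rmd\theta$, and — this is the crux —
\[
\rme^{\rmi\decim_j(k-k')\lambda}=\rme^{\rmi(k-k')\theta}\,\rme^{\rmi(k-k')\decim_j\lambda_j}=\rme^{\rmi(k-k')\theta},
\]
because $\decim_j\lambda_j\in2\pi\Zset$ by~(\ref{eq:integerCondition}) and $k-k'\in\Zset$; moreover $\decim_j^{-1}[\vfonc_{1,j}\overline{\vfonc_{2,j}}](\decim_j^{-1}\theta+\lambda_j)\to[\vfonc_{1,\infty}\overline{\vfonc_{2,\infty}}](\theta)$ pointwise by~(\ref{eq:LimitvfoncAltern}), with modulus dominated by the integrable function $C(1+|\theta|)^{-2\delta}$ (use~(\ref{eq:unfiBoundvfonc}), hence~(\ref{eq:LimitvfoncBound})). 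By dominated convergence this piece tends to $\int_{\Rset}[\vfonc_{1,\infty}\overline{\vfonc_{2,\infty}}](\theta)\,\rme^{\rmi(k-k')\theta}\,\rmd\theta$. On the second piece substitute $\lambda=\decim_j^{-1}\theta-\lambda_j$ and argue identically, now invoking~(\ref{eq:LimitvfoncSym}) and once more~(\ref{eq:integerCondition}) for the phase; it tends to $\int_{\Rset}[\overline{\vfonc_{1,\infty}}\vfonc_{2,\infty}](-\theta)\,\rme^{\rmi(k-k')\theta}\,\rmd\theta$. Adding the two pieces and using~(\ref{eq:vfoncsymDef}), the limit equals $2\int_{\Rset}\vfoncsym_{1,2}(\theta)\,\rme^{\rmi(k-k')\theta}\,\rmd\theta$, which by the change of variable $\theta\mapsto-\theta$ and the Hermitian symmetry $\vfoncsym_{1,2}(-\lambda)=\overline{\vfoncsym_{1,2}(\lambda)}$ (immediate from~(\ref{eq:vfoncsymDef})) is the right-hand side of~(\ref{eq:Limcov-neq1}) with $\limcons_{1,2}=2$. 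If $\lambda_\infty=0$ there is a single bump at $0$, the phase factor is simply $1$, and~(\ref{eq:symetryFreqZero}) identifies $[\vfonc_{1,\infty}\overline{\vfonc_{2,\infty}}]$ with $\vfoncsym_{1,2}$, giving~(\ref{eq:Limcov-neq1}) with $\limcons_{1,2}=1$.

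The step I expect to require the most care is the localisation together with the dominated-convergence setup: one must split $[-\pi,\pi)$ so that the two bumps at $\pm\lambda_j$ remain separated even modulo $2\pi$ (this is where $\lambda_\infty<\pi$ is used), bound the remainder uniformly in $j$, and produce a $\decim_j$-free dominating function in the rescaled variable. It is precisely here that $\delta>1/2$ is used twice — to force the remainder bound $\decim_j^{1-2\delta}\to0$ and to make $(1+|\theta|)^{-2\delta}$ integrable on $\Rset$. By contrast the phase cancellation furnished by~(\ref{eq:integerCondition}), which is the conceptual heart of the decimation mechanism, is a one-line observation once the rescaling $\lambda=\decim_j^{-1}\theta\pm\lambda_j$ is in place.
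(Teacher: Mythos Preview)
Your argument is correct and follows essentially the same route as the paper: write the covariance as a spectral integral via Parseval, then treat the three cases separately, rescaling around $\pm\lambda_j$ and using~(\ref{eq:integerCondition}) to kill the fast phase, with dominated convergence furnished by~(\ref{eq:unfiBoundvfonc}). The only cosmetic difference is that the paper splits $\int_{-\pi}^{\pi}$ into $\int_0^{\pi}+\int_{-\pi}^0$ and rescales each half over the full expanding domain $[-\decim_j\lambda_j,\decim_j(\pi-\lambda_j)]$, rather than first localising to $\delta_0$-neighbourhoods and bounding a remainder; this avoids your separate $O(\decim_j^{1-2\delta})$ estimate but is otherwise the same computation.

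One caveat on your very last step. Your Parseval identity with exponent $\rme^{\rmi\decim_j(k-k')\lambda}$ is in fact the correct one (the paper's~(\ref{eq:covZ0Z1}) with $\rme^{\rmi\decim_j(k'-k)\lambda}$ carries a sign typo that propagates into the statement). However, your attempt to reconcile the two via ``$\theta\mapsto-\theta$ and Hermitian symmetry $\vfoncsym_{1,2}(-\lambda)=\overline{\vfoncsym_{1,2}(\lambda)}$'' does not actually do the job: that substitution turns $\int \vfoncsym_{1,2}(\theta)\rme^{\rmi(k-k')\theta}\rmd\theta$ into $\int \overline{\vfoncsym_{1,2}(\lambda)}\rme^{\rmi(k'-k)\lambda}\rmd\lambda$, which shows the integral is real but not that it is even in $k-k'$. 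Since $\vfoncsym_{1,2}$ need not be real-valued when $i\neq i'$, the two integrals can differ. This is a sign discrepancy in the target formula, not a gap in your reasoning; your computed limit $2\int_{\Rset}\vfoncsym_{1,2}(\theta)\rme^{\rmi(k-k')\theta}\rmd\theta$ is the right answer.
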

\begin{proof}
Using~(\ref{eq:Zdef}) and Parseval's theorem, we have
\begin{align}
\label{eq:covZ0Z1sum}
\PCov\left(Z_{1,j,k},Z_{2,j,k'}\right)
=\sum_{t\in\Zset}\vseq_{1,j}(\decim_jk-t)\vseq_{2,j}(\decim_jk'-t)\\
\label{eq:covZ0Z1}
=\int_{-\pi}^{\pi}[\vfonc_{1,j}\overline{\vfonc_{2,j}}](\lambda)\,\rme^{\rmi\decim_j\lambda (k'-k)}\,\rmd\lambda\;.
\end{align}

We now consider separately the three cases $\lambda_{1,\infty}\neq\lambda_{2,\infty}$, $\lambda_{1,\infty}=\lambda_{2,\infty}>0$ and
$ \lambda_{1,\infty}=\lambda_{2,\infty}=0$.
\begin{enumerate}[1)]
\item Suppose $\lambda_{1,\infty}\neq\lambda_{2,\infty}$. Then by~(\ref{eq:unfiBoundvfonc}), there is a constant $C>0$ such that
\begin{align}\nonumber
\left|\PCov\left(Z_{1,j,k},Z_{2,j,k'}\right)\right|&\leq C\decim_j\int_0^\pi
(1+\decim_j|\lambda-\lambda_{1,j}|)^{-\delta}(1+\decim_j|\lambda-\lambda_{2,j}|)^{-\delta}\,\rmd\lambda\\
\label{eq:limvInJdifferentFreq}
&\to 0 \quad\text{as $j\to\infty$}\;,
\end{align}
since $\decim_j\to\infty$, $\delta>1/2$ and $|\lambda_{1,j}-\lambda_{2,j}|$ has a positive limit.
\item Suppose $\lambda_{1,\infty}=\lambda_{2,\infty}>0$. Setting $\lambda=\decim_j^{-1}\xi+\lambda_{1,j}$
and using~(\ref{eq:integerCondition}), we have
\begin{align}\nonumber
\int_{0}^{\pi}[\vfonc_{1,j}\overline{\vfonc_{2,j}}](\lambda)\,\rme^{\rmi\decim_j\lambda (k'-k)}\,\rmd\lambda
&=\int_{-\decim_j\lambda_{1,j}}^{\decim_j(\pi-\lambda_{1,j})}\decim_j^{-1}
          [\vfonc_{1,j}\overline{\vfonc_{2,j}}](\decim_j^{-1}\xi+\lambda_{1,j})\,\rme^{\rmi\xi (k'-k)}\,\rmd\xi \\
\label{eq:limvInJ}
& \to\int_{-\infty}^{\infty}[\vfonc_{1,\infty}\overline{\vfonc_{2,\infty}}](\xi)\,\rme^{\rmi\xi (k'-k)}\,\rmd\xi  
\quad\text{as $j\to\infty$,} 
\end{align}
where the limit follows from
Conditions~(\ref{eq:CondFreqEqualOrDifferent}),~(\ref{eq:Limitfreq}),~(\ref{eq:unfiBoundvfonc}),~(\ref{eq:LimitvfoncAltern})
and dominated convergence. Similarly we have 
$$
\int_{-\pi}^0[\vfonc_{1,j}\overline{\vfonc_{2,j}}](\lambda)\,\rme^{\rmi\decim_j\lambda (k'-k)}\,\rmd\lambda 
\to\int_{-\infty}^{\infty}[\overline{\vfonc_{1,\infty}}\vfonc_{2,\infty}](-\xi)\,\rme^{\rmi\xi (k'-k)}\,\rmd\xi  
\quad\text{as $j\to\infty$,} 
$$
by using~(\ref{eq:LimitvfoncSym}) instead of~(\ref{eq:LimitvfoncAltern}). The last display,~(\ref{eq:covZ0Z1})
and~(\ref{eq:limvInJ}) yield
\begin{align}\label{eq:limvInJPosSameFreq}
\PCov\left(Z_{1,j,k},Z_{2,j,k'}\right)\to 2\;\int_{-\infty}^{\infty}
\vfoncsym_{1,2}(\xi)\,\rme^{\rmi\xi (k'-k)}\,\rmd\xi  
 \quad\text{as $j\to\infty$}\;.
\end{align}
\item Suppose finally $\lambda_{1,\infty}=\lambda_{2,\infty}=0$. Setting $\lambda=\decim_j^{-1}\xi$ gives
\begin{align*}
\int_{-\pi}^{\pi}[\vfonc_{1,j}\overline{\vfonc_{2,j}}](\lambda)\,\rme^{\rmi\decim_j\lambda (k'-k)}\,\rmd\lambda
&=
\int_{-\decim_j\pi}^{\decim_j\pi}\decim_j^{-1}
          [\vfonc_{1,j}\overline{\vfonc_{2,j}}](\decim_j^{-1}\xi)\,\rme^{\rmi\xi (k'-k)}\,\rmd\xi \\
&\to\int_{-\infty}^{\infty}[\vfonc_{1,\infty}\overline{\vfonc_{2,\infty}}](\xi)\,\rme^{\rmi\xi (k'-k)}\,\rmd\xi  
\end{align*}
by using Conditions~(\ref{eq:CondFreqEqual0orDifferent}),~(\ref{eq:unfiBoundvfonc}),~(\ref{eq:LimitvfoncAltern}) and
dominated convergence. The last display,~(\ref{eq:limvInJdifferentFreq}) 
and~(\ref{eq:limvInJ}) yield~(\ref{eq:Limcov-neq1}).
\end{enumerate}
\end{proof}

\section{Main results}
\label{sec:centr-limit-theor}

We let $\cl$ denote the convergence in law. 
Our first result provides the asymptotic behavior of the sample mean square of an array of a decimated linear sequence under
a global assumption on the 
behavior of the spectral density (the bound~(\ref{eq:unfiBoundvfonc})). A local version of this assumption is considered in
Theorem~\ref{thm:CLTlinearLocale}. 

\begin{theorem}\label{thm:CLTlinear}
Let $\{Z_{i,j,k},\;i=1,2, j\geq0, k\in\Zset\}$ be an array of $\dimV$--dimensional decimated linear processes as defined by~(\ref{eq:Zdef}).
Assume~\ref{it:A1} and that $\decim_j$ is even for $j$ large enough. 
For each  $i=1,\dots,\dimV$, we let $\lambda_{i,\infty}$ denote a frequency in $[0,\pi)$ and 
$\vfonc_{i,\infty}$ a continuous $\Rset\to\Zset$ function such that Condition~\ref{item:FourierConditions} holds.
Then, for any diverging sequence $(n_j)$,
\begin{equation}
\label{eq:CenteredZ}
n_j^{-1/2} \sum_{k=0}^{n_j-1} 
\left[
\begin{array}{c}
Z_{1,j,k}^2 -\PE[Z_{1,j,k}^2]\\
\vdots\\
Z_{\dimV,j,k}^2 -\PE[Z_{\dimV,j,k}^2]
\end{array}
\right]\cl\calN(0,\Gamma) \; ,
\end{equation}
where $\Gamma$ is the covariance matrix defined by
\begin{equation}
  \label{eq:GammaDef}
  \Gamma_{i,i'}= 
4\pi \,\limcons_{i,i'}\,\int_{-\pi}^\pi \left|\sum_{p\in\Zset}\vfoncsym_{i,i'}(\lambda+2p\pi)\right|^2 \, \rmd\lambda\;,\quad 1\leq i, i'\leq \dimV\; , 
\end{equation}
where $\limcons_{i,i'}$ and $\vfoncsym_{i,i'}$ are defined in~(\ref{eq:LimConsDef}) and~(\ref{eq:vfoncsymDef}).
\end{theorem}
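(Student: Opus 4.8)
The plan is to establish the multivariate CLT via the Cramér–Wold device combined with the classical method of moments (cumulants), which is the standard route for quadratic forms in linear processes. First I would fix an arbitrary vector $c=(c_1,\dots,c_\dimV)\in\Rset^\dimV$ and consider the scalar array $S_{j}=n_j^{-1/2}\sum_{k=0}^{n_j-1}\sum_{i=1}^{\dimV}c_i(Z_{i,j,k}^2-\PE[Z_{i,j,k}^2])$; it suffices to show $S_j\cl\calN(0,c^\top\Gamma c)$. Expanding each $Z_{i,j,k}$ via the linear representation~(\ref{eq:Zdef}), $S_j$ becomes a centered quadratic form $\sum_{s,t}a_j(s,t)\,(\xi_s\xi_t-\PE[\xi_s\xi_t])$ in the i.i.d.\ sequence $\{\xi_t\}$, with coefficients $a_j(s,t)=n_j^{-1/2}\sum_{i,k}c_i\,\vseq_{i,j}(\decim_jk-s)\vseq_{i,j}(\decim_jk-t)$. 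Using assumption~\ref{it:A1} (four moments, finite $\kappa_4$), the cumulants of such a quadratic form are explicitly expressible: the $q$-th cumulant of $S_j$ is a sum of a ''Gaussian-type'' term built from traces of products of the coefficient matrix $A_j=(a_j(s,t))$ and a term carrying the factor $\kappa_4$ from the diagonal. The strategy is then: (i) show $\PVar(S_j)\to c^\top\Gamma c$; (ii) show all cumulants of order $q\geq3$ tend to $0$; then invoke the fact that a sequence whose cumulants converge to those of a Gaussian converges in law to that Gaussian.

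For step (i), the variance is $\PVar(S_j)=2\trace(A_j^2)+\kappa_4\sum_t a_j(t,t)^2$ (up to the obvious bookkeeping of which indices $s=t$ enter). Both pieces should be rewritten in the spectral domain: by Parseval, $a_j(s,t)$ involves $\int[\vfonc_{i,j}\overline{\vfonc_{i',j}}](\lambda)\rme^{\rmi\decim_j\lambda(\cdot)}\rmd\lambda$-type kernels, exactly as in the proof of Proposition~\ref{prop:Limcov}. The sum over $k=0,\dots,n_j-1$ produces a Fej\'er/Dirichlet kernel in $\decim_j\lambda$; rescaling $\lambda\mapsto\decim_j^{-1}\xi+\lambda_{i,j}$ as in~(\ref{eq:limvInJ}) and using the local limit~(\ref{eq:LimitvfoncAltern})–(\ref{eq:LimitvfoncSym}) together with the uniform polynomial bound~(\ref{eq:unfiBoundvfonc}) (which provides the integrable dominating function since $\delta>1/2$) lets one pass to the limit. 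The integer condition~(\ref{eq:integerCondition}) is what makes the shift $\lambda_{i,j}$ harmless in the exponential $\rme^{\rmi\decim_j\lambda(k'-k)}$. The aliasing sum $\sum_{p\in\Zset}\vfoncsym_{i,i'}(\lambda+2p\pi)$ and the factor $4\pi\limcons_{i,i'}$ in~(\ref{eq:GammaDef}) will emerge from folding the real line back onto $[-\pi,\pi)$ when the $k$-sum Fej\'er kernel is replaced by its periodized limit; the evenness of $\decim_j$ is presumably needed here to control the behavior near the symmetric frequency $-\lambda_{i,\infty}$ and the case $\lambda_{i,\infty}>0$ (the $\limcons=2$ case). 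One also checks that the $\kappa_4$ contribution to the variance vanishes asymptotically because $\sum_t a_j(t,t)^2 = O(n_j^{-1})\cdot(\text{bounded})$ — the diagonal is a lower-order set.

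For step (ii), the key estimate is that for $q\geq3$ the $q$-th cumulant of $S_j$ is bounded by a constant times $n_j^{1-q/2}$ multiplied by sums of products of $\ell^1$–$\ell^2$ norms of rows/columns of $A_j$, all of which are $O(1)$ uniformly in $j$ thanks to~(\ref{eq:unfiBoundvfonc}); hence these cumulants are $O(n_j^{1-q/2})\to0$. Making this bound rigorous — tracking the combinatorics of the cumulant-of-quadratic-form expansion, and showing the relevant traces $\trace(|A_j|^q)$-type quantities stay bounded under only the polynomial spectral control rather than summability of the kernels — is the main obstacle, and this is presumably where the ''technical lemmas'' of Section~\ref{sec:technical-lemmas} are invoked. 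Once (i) and (ii) hold, the Cramér–Wold device gives the joint convergence~(\ref{eq:CenteredZ}), and since $c$ was arbitrary the limiting covariance is read off from the quadratic form $c\mapsto c^\top\Gamma c$ computed in step (i), yielding exactly~(\ref{eq:GammaDef}). A final remark: the continuity of $\vfonc_{i,\infty}$ assumed in the theorem is used to guarantee that the periodized limit $\sum_p\vfoncsym_{i,i'}(\lambda+2p\pi)$ is a well-defined function (the sum converges by~(\ref{eq:LimitvfoncBound})) whose square is integrable, so that $\Gamma$ is finite.
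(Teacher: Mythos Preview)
Your overall architecture is reasonable and the variance computation in step~(i) is essentially what the paper does (Lemmas~\ref{lem:DecompAjBj}--\ref{lem:varlimitCLTlinear} and Corollary~\ref{cor:Limcov}), including the observation that the $\kappa_4$ term is asymptotically negligible. But step~(ii), the control of higher cumulants, has a genuine gap under the stated hypotheses. Assumption~\ref{it:A1} gives only $\PE[\xi_0^4]<\infty$; it says nothing about moments of order $6$ or higher. Since $S_j$ is a quadratic form in the $\xi_t$'s, its $q$-th cumulant involves joint moments of the $\xi_t$'s of total degree $2q$, so already the third cumulant of $S_j$ need not be finite. Your sentence ``the $q$-th cumulant of $S_j$ is a sum of a Gaussian-type term \dots\ and a term carrying the factor $\kappa_4$'' is only correct when all cumulants of $\xi_0$ of order up to $2q$ exist; under~\ref{it:A1} alone the method of moments simply cannot be launched beyond order~$2$. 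If you strengthened the moment assumption to $\PE[|\xi_0|^p]<\infty$ for all $p$, your scheme would go through, but that is not the theorem as stated.

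The paper avoids this obstacle by a different route: it does not bound higher cumulants at all. Instead it (a)~proves a finite-dimensional CLT for the linear array $(Z_{i,j,k})$ itself via Lindeberg--Feller (Proposition~\ref{prop:CLTlinearForms}), obtaining a limiting Gaussian process $\bZ_\centerdot$; (b)~assuming the array $Y_{j,k}=(Z_{i,j,k}^2-\PE Z_{i,j,k}^2)_i$ is $m$-dependent, applies a block-Lindeberg CLT for $m$-dependent arrays (Proposition~\ref{prop:cltMdep}), where the Lindeberg condition is verified using the weak convergence from~(a) together with the second-moment convergence from Corollary~\ref{cor:Limcov}; and (c)~removes the $m$-dependence assumption by a smooth spectral truncation $\vseq_{i,j}^{(m)}(t)=\vseq_{i,j}(t)K(t/(m\decim_j))$, showing the truncated kernels still satisfy~(\ref{eq:unfiBoundvfonc})--(\ref{eq:Limitvfonc}) uniformly in $m$ and that the approximation error vanishes in $L^2$ as $m\to\infty$ after $j\to\infty$. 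This three-step scheme uses only fourth moments throughout. Incidentally, the continuity of $\vfonc_{i,\infty}$ is used not where you guessed but precisely in step~(c), to ensure that the mollified limit ${\vfonc_{i,\infty}}^{(m)}$ converges pointwise back to $\vfonc_{i,\infty}$ as $m\to\infty$; see~(\ref{eq:vFoncMToVfonc}).
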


\begin{remark}
 From~(\ref{eq:LimitvfoncBound}), it follows that the doubly infinite sum in~(\ref{eq:GammaDef}) is well defined and bounded on $\lambda\in\Rset$ and hence $\Gamma$ is well defined.
\end{remark}

\begin{remark}
The number of time indices $k$ appearing in the centered sum in~(\ref{eq:CenteredZ}) is $n_j$ and asymptotic normalization occurs as $j$ and $n_j$ tend to $\infty$. 
\end{remark}

\begin{remark}
The presence of the factor $\decim_j$ in~(\ref{eq:Zdef}), and hence of decimation, is essential for the Central Limit Theorem to
hold in this generality because it ensures that the dependence of the $Z_{i,j,k}$'s decreases sufficiently fast as
$j\to\infty$.  Decimation of this type is typically encountered in settings involving wavelets, or more generally filter
banks, see \cite{mallat:1998}. 
\end{remark}
\begin{remark}\label{rem:CenteringTermindep_of_j}
In applications, the expectations in~(\ref{eq:CenteredZ}), which depend on $j$, will be approximated by quantities that
are independent of $j$. To see why this is possible, observe that, applying Relation~(\ref{eq:Limcov-neq1}) in Proposition~\ref{prop:Limcov}
with $k=k'=0$ and $i=i'=1,\dots,\dimV$, we get 
\begin{equation}
    \label{eq:varianceZlim}
\lim_{j\to\infty}\PE\left[Z_{i,j,0}^2\right]=\int_{-\infty}^{\infty}\left|\vfonc_{i,\infty}(\lambda)\right|^2\,\rmd\lambda <
\infty \; .
\end{equation}
Thus, when the convergence rate to this limit is fast enough, the expectations in~(\ref{eq:CenteredZ}) can be replaced by 
$\int_{-\infty}^{\infty}\left|\vfonc_{i,\infty}(\lambda)\right|^2\,\rmd\lambda$, $i=1,\dots,\dimV$, which does not depend on $j$.
\end{remark}  

We have assumed in~(\ref{eq:unfiBoundvfonc}) 
 a bound for $\vfonc_{i,j}(\lambda)$ for $\lambda\in(-\pi,\pi)$. This bound implies
that the spectral density of the process $Z_{i,j,\centerdot}$ defined in~(\ref{eq:Zdef}) is bounded on $(-\pi,\pi)$. We shall
weaken this assumption by only assuming a local bound around the frequency $\lambda_{i,j}$ as follows.
\begin{theorem}\label{thm:CLTlinearLocale}
Assume that all the conditions of Theorem~\ref{thm:CLTlinear} hold except that~(\ref{eq:unfiBoundvfonc}) is replaced by
\begin{equation}
\label{eq:unfiBoundvfoncLocale}
\sup_{j\geq0} \sup_{|\lambda-\lambda_{i,\infty}|\leq\varepsilon}
\decim_j^{-1/2}|\vfonc_{i,j}(\lambda)|(1+\decim_j\left|\lambda-\lambda_{i,j}\right|)^\delta < \infty \;,  
\end{equation}
where  $\varepsilon>0$ is arbitrary small. Suppose in addition that
\begin{equation}
\label{eq:BoundvfoncLocaleNonZero}
n_j^{1/2}\;\int_{0}^\pi\1(|\lambda-\lambda_{i,\infty}|>\varepsilon)\;
|\vfonc_{i,j}(\lambda)|^2\;\rmd\lambda \to0\quad\text{as}\quad j \to\infty \;.
\end{equation}
Then the conclusion of Theorem~\ref{thm:CLTlinear}, that is, the CLT~(\ref{eq:CenteredZ}), still holds.
\end{theorem}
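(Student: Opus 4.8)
The plan is to split each $\vfonc_{i,j}$ into a frequency-localized part, to which Theorem~\ref{thm:CLTlinear} applies verbatim, and a remainder that contributes nothing to the limiting variance. Fix $\varepsilon>0$ as in~(\ref{eq:unfiBoundvfoncLocale})--(\ref{eq:BoundvfoncLocaleNonZero}), and let $\chi_i$ be the even, $2\pi$-periodic, $\{0,1\}$-valued function with $\chi_i(\lambda)=\1\{|\lambda-\lambda_{i,\infty}|\le\varepsilon\}$ for $\lambda\in[0,\pi]$. Set $g_{i,j}=\chi_i\vfonc_{i,j}$ and $h_{i,j}=(1-\chi_i)\vfonc_{i,j}$; since $\chi_i$ is even, both have real, square-summable inverse Fourier sequences $\vseq^{g}_{i,j}$ and $\vseq^{h}_{i,j}$, and these define decimated linear processes $Y_{i,j,k}=\sum_t\vseq^{g}_{i,j}(\decim_jk-t)\xi_t$ and $V_{i,j,k}=\sum_t\vseq^{h}_{i,j}(\decim_jk-t)\xi_t$, built from the same $\{\xi_t\}$ and the same $\decim_j$, with $Z_{i,j,k}=Y_{i,j,k}+V_{i,j,k}$.

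The first step is to check that the array $\{Y_{i,j,k}\}_{i=1,\dots,\dimV}$ satisfies the hypotheses of Theorem~\ref{thm:CLTlinear} with the same $\lambda_{i,\infty}$ and $\vfonc_{i,\infty}$. The frequencies $\lambda_{i,j}$ are unchanged, so~(\ref{eq:integerCondition})--(\ref{eq:CondFreqEqualOrDifferent}) still hold. On $[0,\pi)$ we have $g_{i,j}=\vfonc_{i,j}$ where $|\lambda-\lambda_{i,\infty}|\le\varepsilon$ and $g_{i,j}=0$ elsewhere, so the local bound~(\ref{eq:unfiBoundvfoncLocale}) upgrades to the global bound~(\ref{eq:unfiBoundvfonc}) for $g_{i,j}$. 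Finally, since $\decim_j^{-1}\lambda+\lambda_{i,j}\to\lambda_{i,\infty}$, for each fixed $\lambda\in\Rset$ one has $g_{i,j}(\decim_j^{-1}\lambda+\lambda_{i,j})=\vfonc_{i,j}(\decim_j^{-1}\lambda+\lambda_{i,j})$ once $j$ is large, so~(\ref{eq:Limitvfonc}) holds with the same limit $\vfonc_{i,\infty}$. As $\Gamma$ in~(\ref{eq:GammaDef}) depends only on the $\lambda_{i,\infty}$'s and the $\vfonc_{i,\infty}$'s, Theorem~\ref{thm:CLTlinear} applied to $\{Y_{i,j,k}\}$ gives
$$
\mathbf A_j:=n_j^{-1/2}\sum_{k=0}^{n_j-1}\big(Y_{i,j,k}^2-\PE[Y_{i,j,k}^2]\big)_{i=1,\dots,\dimV}\ \cl\ \calN(0,\Gamma).
$$

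The second step, which is the heart of the argument, is to show that for each $i$, $R_{i,j}:=n_j^{-1/2}\sum_{k=0}^{n_j-1}\big[(Z_{i,j,k}^2-Y_{i,j,k}^2)-\PE[Z_{i,j,k}^2-Y_{i,j,k}^2]\big]\cp 0$; since $Z_{i,j,k}^2-\PE[Z_{i,j,k}^2]$ splits as $(Y_{i,j,k}^2-\PE[Y_{i,j,k}^2])+\big((Z_{i,j,k}^2-Y_{i,j,k}^2)-\PE[Z_{i,j,k}^2-Y_{i,j,k}^2]\big)$, the left-hand side of~(\ref{eq:CenteredZ}) is $\mathbf A_j+(R_{1,j},\dots,R_{\dimV,j})$, and Slutsky's lemma then yields~(\ref{eq:CenteredZ}). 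I would prove $R_{i,j}\cp0$ via $\PVar(R_{i,j})=n_j^{-1}\sum_{k,k'=0}^{n_j-1}\PCov\big(Z_{i,j,k}^2-Y_{i,j,k}^2,\ Z_{i,j,k'}^2-Y_{i,j,k'}^2\big)\to0$. Writing $Z_{i,j,k}^2-Y_{i,j,k}^2=2Y_{i,j,k}V_{i,j,k}+V_{i,j,k}^2$ and using that, under~\ref{it:A1}, covariances of products of linear forms $A,B,C,D$ in $\{\xi_t\}$ obey $\PCov(AB,CD)=\PE[AC]\PE[BD]+\PE[AD]\PE[BC]+\kappa_4\sum_t a_tb_tc_td_t$, together with the key fact that $g_{i,j}$ and $h_{i,j}$ have disjoint supports — so that $\PCov(Y_{i,j,k},V_{i,j,k'})=0$ for all $k,k'$ — each such covariance equals $4\PCov(Y_{i,j,k},Y_{i,j,k'})\PCov(V_{i,j,k},V_{i,j,k'})+2\PCov(V_{i,j,k},V_{i,j,k'})^2+\kappa_4\sum_t b_{k,t}b_{k',t}$ with $b_{k,t}=\vseq^{h}_{i,j}(\decim_jk-t)\big[2\vseq^{g}_{i,j}(\decim_jk-t)+\vseq^{h}_{i,j}(\decim_jk-t)\big]$, so every term carries a factor built from $V_{i,j,\cdot}$ alone. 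These are all small: $\sigma^2_{h,j}:=\PE[V_{i,j,0}^2]=\int_{-\pi}^{\pi}|h_{i,j}|^2=\int_{\{\chi_i=0\}}|\vfonc_{i,j}|^2$, so by~(\ref{eq:BoundvfoncLocaleNonZero}) and the symmetry~(\ref{eq:symVfonc}) we have $n_j^{1/2}\sigma^2_{h,j}\to0$, hence $\sigma^2_{h,j}\to0$; while the localized part is uniformly tame, namely $\sup_j\PE[Y_{i,j,0}^2]<\infty$ and $\sup_j\sum_{\ell\in\Zset}|\PCov(Y_{i,j,0},Y_{i,j,\ell})|<\infty$, both consequences of~(\ref{eq:unfiBoundvfonc}) for $g_{i,j}$ and $\delta>1/2$. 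Bounding $|\PCov(V_{i,j,k},V_{i,j,k'})|\le\sigma^2_{h,j}$ by Cauchy--Schwarz, and, for the $\kappa_4$-sums, using $\|\vseq^{h}_{i,j}\|_\infty\le\sigma_{h,j}$, $\sum_{k\in\Zset}\vseq^{g}_{i,j}(\decim_jk-t)^2\le\sup_j\PE[Y_{i,j,0}^2]$, and $\sum_t\sum_{k=0}^{n_j-1}\vseq^{h}_{i,j}(\decim_jk-t)^2=n_j\sigma^2_{h,j}$, a routine accounting gives $\PVar(R_{i,j})\le C\big[(n_j^{1/2}\sigma^2_{h,j})^2+\sigma^2_{h,j}\big]\to0$, as required.

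The step I expect to require genuine work is this last variance bound, and within it the uniform estimate $\sup_j\sum_{\ell\in\Zset}|\PCov(Y_{i,j,0},Y_{i,j,\ell})|<\infty$ — a uniform short-range-dependence property of the decimated, spectrally concentrated process $Y_{i,j,\cdot}$ — which should follow from the aliasing identity used in the proof of Proposition~\ref{prop:Limcov} together with the polynomial bound~(\ref{eq:unfiBoundvfonc}) and $\delta>1/2$, and is plausibly among the technical lemmas. Conceptually, however, the picture is simple: the decomposition separates the \emph{large but spectrally localized} part, governed by Theorem~\ref{thm:CLTlinear}, from the \emph{spread-out but $L^2$-small} part, and the factor $n_j^{1/2}$ in assumption~(\ref{eq:BoundvfoncLocaleNonZero}) is exactly what is needed to kill the one contribution not absorbed by the weak decay of the covariances of $V_{i,j,\cdot}$, namely the pure-$V$ variance $2\,n_j^{-1}\sum_{k,k'}\PCov(V_{i,j,k},V_{i,j,k'})^2\le 2\,(n_j^{1/2}\sigma^2_{h,j})^2$.
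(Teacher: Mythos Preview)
Your overall strategy coincides with the paper's: localize $\vfonc_{i,j}$ by a spectral cutoff, apply Theorem~\ref{thm:CLTlinear} to the localized piece, and show that the remainder is asymptotically negligible. The paper packages the remainder estimate as Proposition~\ref{prop:ApproxLocalCondition}; your decomposition $Z^2-Y^2=2YV+V^2$, the orthogonality $\PCov(Y_{i,j,k},V_{i,j,k'})=0$, and the cumulant bookkeeping are essentially the same as there, and your treatment of the pure-$V$ term and of the fourth-cumulant contributions goes through.

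The one place that does not work as written is your claim $\sup_j\sum_{\ell\in\Zset}|\PCov(Y_{i,j,0},Y_{i,j,\ell})|<\infty$. This is \emph{not} a consequence of the global bound~(\ref{eq:unfiBoundvfonc}) for $g_{i,j}$ together with $\delta>1/2$: those hypotheses yield a uniform bound on the spectral density $\widehat f_j$ of the decimated process $Y_{i,j,\cdot}$ (this is exactly what Lemma~\ref{lem:DomConvArgumenttt} is used for, see~(\ref{eq:Bounfwidehatfj})), but a uniformly bounded spectral density does not place its Fourier coefficients in $\ell^1$. Your sharp indicator $\chi_i$ in fact introduces discontinuities in the aliased density, so absolute summability of the covariances is doubtful already for fixed $j$, and it is not among the paper's technical lemmas.

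The paper sidesteps this by never taking absolute values on the second-order cross term: it writes
\[
n_j^{-1}\sum_{k,k'}\PCov(Y_{k},Y_{k'})\,\PCov(V_{k},V_{k'})
= n_j^{-1}\int_{-\pi}^\pi\!\!\int_{-\pi}^\pi\widehat f_j(\lambda)\,\widetilde f_j(\mu)\,\Bigl|\sum_{k=0}^{n_j-1}\rme^{\rmi k(\lambda+\mu)}\Bigr|^2\rmd\lambda\,\rmd\mu,
\]
which is nonnegative and bounded by $2\pi\,\sup_\lambda\widehat f_j(\lambda)\,\int_{-\pi}^\pi\widetilde f_j(\mu)\,\rmd\mu=C\,\sigma_{h,j}^2\to0$; see~(\ref{eq:AjhattildeBound})--(\ref{eq:Bounftildehatfj}). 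If you prefer to keep your line of argument, apply Cauchy--Schwarz in $(k,k')$ and use instead the square-summability $\sup_j\sum_{\ell}\PCov(Y_{i,j,0},Y_{i,j,\ell})^2=2\pi\sup_j\int_{-\pi}^\pi\widehat f_j^{\,2}<\infty$, which \emph{does} follow from the uniform spectral bound; this gives a bound $C\,n_j^{1/2}\sigma_{h,j}^2\to0$ for the cross term. Either patch closes the gap; otherwise your proposal is correct.
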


\begin{remark}
Since~(\ref{eq:unfiBoundvfonc}) is replaced by the local condition 
\eqref{eq:BoundvfoncLocaleNonZero},  we impose the additional condition~(\ref{eq:BoundvfoncLocaleNonZero}) on the growth of
$n_j$. This condition
does not appear in the conditions of Theorem~\ref{thm:CLTlinear}, where it was only required that $n_j\to\infty$.
\end{remark}

\section{Application to spectral density estimation}\label{sec:appl-non-param}

Let $\{X(u),\;u\in\Zset\}$ be a standard linear process,
\begin{equation}
\label{eq:Xdef}
X_u=\sum_{t\in\Zset}\aseq(u-t)\,\xi_t\;,
\end{equation}
where $\{\xi_t,\;t\in\Zset\}$ is a centered weak white noise with unit variance and $\{\aseq(t),\,t\in\Zset\}$ is real-valued
sequence such that $\sum_k\aseq_k^2<\infty$ with Fourier series
\begin{equation}
\label{eq:afoncdef}
\afonc(\lambda)=(2\pi)^{-1/2}\,\sum_{t\in\Zset} \aseq(t)\,\rme^{-\rmi\lambda t} \; .
\end{equation}
Then $\{X_k,\;k\in\Zset\}$ admits the following spectral density
$$
f(\lambda)=\left|\afonc(\lambda)\right|^2,\quad\lambda\in(-\pi,\pi)\;.
$$
For simplicity, as in Section~3 of \cite{bhansali:giraitis:kokoszka:2007}, we consider the problem of estimating $f(0)$ from
observations $X_1,\dots,X_n$. 

Let us denote by $W$ a bounded $\Rset\to\Rset$ function with compact support and by $\hat{W}$ its Fourier transform,
$$
\hat{W}(\xi)=\int_{-\infty}^\infty W(t) \rme^{-\rmi \xi t} \, \rmd t \; .
$$

Let $(\decim_j)$ be any diverging sequence of even integers. 

We let $\dimV=1$, and $\lambda_{1,j} = \lambda_ {1,\infty}=0$ for all $j\geq0$, which
 yields
(\ref{eq:integerCondition}),~(\ref{eq:Limitfreq}),~(\ref{eq:CondFreqEqual0orDifferent})
and~(\ref{eq:CondFreqEqualOrDifferent}) in Condition~\ref{item:FourierConditions}.

Define
\begin{equation}
  \label{eq:ZestSpecDef}
Z_{1,j,k}=\decim_j^{-1/2}\sum_{u\in\Zset}W(k - \decim_j^{-1} u )\,X_u \; .
\end{equation}

We assume that
\begin{enum_H}
\item\label{item:betaAssump} As $\lambda\to0$,
\begin{equation}
  \label{eq:fSmooth}
  f(\lambda)=f(0)+O(|\lambda|^2)\;.
\end{equation}
\item\label{item:appl-spectr-dens} the support of $W$ is included in $[-1,0]$,
  $\sup_{\xi\in\Rset}|\hat{W}(\xi)|(1+|\xi|)^{\beta}<\infty$ with $\beta>1$ and 
\begin{equation}
    \label{eq:WhatNorm}
  \int_{-\infty}^\infty\left|\hat{W}(\lambda)\right|^2\;\rmd\lambda=1
\end{equation}
\end{enum_H}

Assumptions~\ref{item:betaAssump} and~\ref{item:appl-spectr-dens} are related to the standard bias control of kernel
estimates of the spectral density (see Lemma~\ref{lem:BijApproxBound} below).

Define
\begin{equation}
  \label{eq:njDefEstSpec}
  n_j=[\decim_j^{-1}(n+1)] \; .
\end{equation}

For all $k=0,\ldots,n_j-1$, since $W(k - \decim_j^{-1} u )$ vanishes for $u\leq0$ and $u\geq n+1$, we have
$$
Z_{1,j,k}=\decim_j^{-1/2}\sum_{u=1}^nW(k - \decim_j^{-1} u )\,X_u \; .
$$
In other words, $\{Z_{1,j,k},\,k=0,\ldots,n_j-1\}$ can be computed from the $n$ observations $X_1,\dots,X_n$.
Thus
$$
\hat{f}_n(0)=n_j^{-1}\sum_{k=0}^{n_j-1}Z_{1,j,k}^2
$$
can be used as an estimator of $f(0)$.
The following theorem provides a central limit result for $\hat{f}_n(0)$.

\begin{theorem}\label{thm:appl-spectr-dens}
Assume~\ref{item:betaAssump} and~\ref{item:appl-spectr-dens} with $\beta>2$. Let $(\decim_n)$ be a diverging sequence of
even integers such that $\decim_n^{-1}n\to\infty$ 
Then, as $n\to\infty$,
\begin{align}
  \label{eq:limitEsp}
\PE\left[\hat{f}_n(0)\right]  =
  \PE\left[Z_{1,n,0}^2\right]  =f(0)+O(\decim_n^{-2}) \; .
\end{align}
If moreover~\ref{it:A1} in Section~\ref{sec:centr-limit-theor} holds and
\begin{equation}
  \label{eq:njCondEstSpec}
  n^{1/2}\decim_n^{1/2-2\beta} \to 0 \;,
\end{equation}
then
\begin{align}\label{eq:nCLTCondEstSpec} 
 (\decim_n^{-1}n)^{1/2} \{\hat{f}_n(0)-\PE\left[Z_{1,n,0}^2\right]\}\cl  \calN(0,\sigma^2)\;,
\end{align}
where
$$
\sigma^2 =2\pi\;f(0)^2\;\int_{-\pi}^\pi\left(\sum_{p\in\Zset}\left|\hat{W}(\lambda+2p\pi)\right|^2\right)^2\;\rmd\lambda \; .
$$
\end{theorem}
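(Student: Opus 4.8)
The plan is to realize $\{Z_{1,j,k}\}$ as a one-dimensional array of decimated linear processes and apply Theorem~\ref{thm:CLTlinearLocale}. Substituting $X_u=\sum_{t\in\Zset}\aseq(u-t)\xi_t$ into~\eqref{eq:ZestSpecDef} and using $W(k-\decim_j^{-1}u)=W\big(\decim_j^{-1}(\decim_j k-u)\big)$, the change of summation index $w=\decim_j k-u$ gives
\[
Z_{1,j,k}=\sum_{t\in\Zset}\vseq_{1,j}(\decim_j k-t)\,\xi_t\,,\qquad
\vseq_{1,j}(m)=\decim_j^{-1/2}\sum_{w\in\Zset}W(\decim_j^{-1}w)\,\aseq(m-w)\,,
\]
which is of the form~\eqref{eq:Zdef} with $\dimV=1$ ($\vseq_{1,j}$ is real-valued and in $\ell^2$, being a finitely supported sequence convolved with $\aseq\in\ell^2$). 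Its Fourier series factorizes, $\vfonc_{1,j}(\lambda)=\decim_j^{-1/2}V_j(\lambda)\,\afonc(\lambda)$ with $V_j(\lambda):=\sum_{w\in\Zset}W(\decim_j^{-1}w)\,\rme^{-\rmi w\lambda}$. I take $\lambda_{1,j}=\lambda_{1,\infty}=0$, which (as already noted in the text) gives the frequency part of Condition~\ref{item:FourierConditions}, and $\vfonc_{1,\infty}(\xi)=f(0)^{1/2}\,\hat W(\xi)$: since $t\mapsto W(t)\rme^{-\rmi t\xi}$ has compact support, $\decim_j^{-1}V_j(\decim_j^{-1}\xi)=\decim_j^{-1}\sum_w W(\decim_j^{-1}w)\rme^{-\rmi(\decim_j^{-1}w)\xi}$ is a Riemann sum converging to $\hat W(\xi)$, while $|\afonc(\decim_j^{-1}\xi)|\to|\afonc(0)|=f(0)^{1/2}$ by~\ref{item:betaAssump}; hence~\eqref{eq:Limitvfonc} holds after absorbing $\arg\afonc(\decim_j^{-1}\cdot)$ into the phase $\vphase_j$ (allowed since $\dimV=1$), and the polynomial control~\eqref{eq:LimitvfoncBound} follows from $|\hat W(\xi)|\le C(1+|\xi|)^{-\beta}$ in~\ref{item:appl-spectr-dens}.

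For the bias~\eqref{eq:limitEsp}, the untruncated array $\{Z_{1,j,k}\}_{k\in\Zset}$ is weakly stationary in $k$, so $\PE[Z_{1,j,k}^2]=\PE[Z_{1,j,0}^2]$ for all $k$ and $\PE[\hat f_n(0)]=\PE[Z_{1,n,0}^2]$ (here $j=n$, $n_n=[\decim_n^{-1}(n+1)]$ as in~\eqref{eq:njDefEstSpec}). By Parseval, $\PE[Z_{1,n,0}^2]=\int_{-\pi}^\pi|\vfonc_{1,n}(\lambda)|^2\,\rmd\lambda=\decim_n^{-1}\int_{-\pi}^\pi|V_n(\lambda)|^2 f(\lambda)\,\rmd\lambda$. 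Substituting $\lambda=\decim_n^{-1}\xi$ and using the Poisson-type identity $\decim_n^{-1}V_n(\decim_n^{-1}\xi)=\sum_{p\in\Zset}\hat W(\xi+2\pi p\decim_n)=\hat W(\xi)+O(\decim_n^{-\beta})$, uniformly on $|\xi|\le\decim_n\pi$ (the aliasing tail being summable since $\beta>1$), together with $f(\decim_n^{-1}\xi)=f(0)+O(\decim_n^{-2}\xi^2)$ near $0$ from~\ref{item:betaAssump}, the finiteness of $\int_{-\infty}^\infty\xi^2|\hat W(\xi)|^2\,\rmd\xi$ (from $\beta>3/2$), the normalization~\eqref{eq:WhatNorm}, and $\int_{-\pi}^\pi f(\lambda)\,\rmd\lambda=\sum_t\aseq(t)^2<\infty$, one obtains $\PE[Z_{1,n,0}^2]=f(0)+O(\decim_n^{-2})+O(\decim_n^{-\beta})+O(\decim_n^{1-2\beta})=f(0)+O(\decim_n^{-2})$, the last equality using $\beta>2$. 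These are precisely the estimates collected in Lemma~\ref{lem:BijApproxBound}.

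For the CLT~\eqref{eq:nCLTCondEstSpec}, it remains to verify the two hypotheses of Theorem~\ref{thm:CLTlinearLocale}. The local bound~\eqref{eq:unfiBoundvfoncLocale} holds with any $\delta\in(1/2,\beta]$: for $|\lambda|\le\varepsilon$ the Poisson expansion gives $\decim_j^{-1}|V_j(\lambda)|\le|\hat W(\decim_j\lambda)|+C\decim_j^{-\beta}$, so $\decim_j^{-1/2}|\vfonc_{1,j}(\lambda)|(1+\decim_j|\lambda|)^\delta=\decim_j^{-1}|V_j(\lambda)|\,|\afonc(\lambda)|\,(1+\decim_j|\lambda|)^\delta$ is bounded uniformly in $j$, $|\afonc|$ being bounded near $0$ by~\ref{item:betaAssump}. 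For~\eqref{eq:BoundvfoncLocaleNonZero}, on $\{\varepsilon\le|\lambda|\le\pi\}$ every alias $\decim_j(\lambda+2\pi p)$ sits at distance $\ge\decim_j\varepsilon$ from $0$, so $\decim_j^{-1}|V_j(\lambda)|\le C\decim_j^{-\beta}$, hence $|\vfonc_{1,j}(\lambda)|^2\le C\decim_j^{1-2\beta}f(\lambda)$ and $\int_0^\pi\1(\lambda>\varepsilon)|\vfonc_{1,n}(\lambda)|^2\,\rmd\lambda\le C\decim_n^{1-2\beta}$; since $n_n\sim\decim_n^{-1}n$, this gives $n_n^{1/2}\int_0^\pi\1(\lambda>\varepsilon)|\vfonc_{1,n}(\lambda)|^2\,\rmd\lambda\le C\,n^{1/2}\decim_n^{1/2-2\beta}\to0$ by~\eqref{eq:njCondEstSpec}. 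Theorem~\ref{thm:CLTlinearLocale} then yields $n_n^{-1/2}\sum_{k=0}^{n_n-1}\big(Z_{1,n,k}^2-\PE[Z_{1,n,k}^2]\big)\cl\calN(0,\Gamma_{1,1})$; by stationarity this sum equals $n_n^{1/2}\{\hat f_n(0)-\PE[Z_{1,n,0}^2]\}$, and $n_n^{1/2}$ may be replaced by $(\decim_n^{-1}n)^{1/2}$ since their ratio tends to $1$. Finally, evaluating $\Gamma$ in~\eqref{eq:GammaDef} at $\dimV=1$, with $\limcons_{1,1}=1$ (as $\lambda_{1,\infty}=0$) and $\vfoncsym_{1,1}(\lambda)=\tfrac12\big(|\vfonc_{1,\infty}(-\lambda)|^2+|\vfonc_{1,\infty}(\lambda)|^2\big)=f(0)\,|\hat W(\lambda)|^2$ (using $|\hat W(-\lambda)|=|\hat W(\lambda)|$, valid as $W$ is real), gives the announced $\sigma^2$.

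The step I expect to be most delicate is the uniform-in-$j$ comparison between the sampled kernel $w\mapsto W(\decim_j^{-1}w)$ and the continuous template $W$, i.e.\ establishing that the aliasing (Riemann-sum) remainders in $\decim_j^{-1}V_j(\decim_j^{-1}\cdot)\approx\hat W$ carry the right polynomial decay, uniformly in $j$. This underlies both the $O(\decim_n^{-2})$ rate in~\eqref{eq:limitEsp} --- where it forces $\beta>2$ --- and the local bound~\eqref{eq:unfiBoundvfoncLocale}, and is exactly what the technical lemmas of Section~\ref{sec:technical-lemmas} provide.
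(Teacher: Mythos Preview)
Your proof is correct and follows essentially the same route as the paper: realize $Z_{1,j,k}$ as a one-dimensional decimated linear process, use Poisson summation to control the sampled kernel (your $V_j$ equals the paper's $\decim_j^{1/2}B_j$ of Lemma~\ref{lem:BijApproxBound}), split the bias into a near-zero piece handled via~\eqref{eq:fSmooth} and a far-from-zero piece handled by the $\decim_j^{1-2\beta}$ aliasing bound (this is the content of Lemmas~\ref{lem:BiasControl}--\ref{lem:BijApproxBound}), and then verify~\eqref{eq:unfiBoundvfoncLocale}--\eqref{eq:BoundvfoncLocaleNonZero} to invoke Theorem~\ref{thm:CLTlinearLocale} (what the paper packages as Lemma~\ref{lem:SpecDensEstConditions}). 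Your phase trick---absorbing $\arg\afonc(\decim_j^{-1}\cdot)$ into $\vphase_j$ so that $\vfonc_{1,\infty}=f(0)^{1/2}\hat W$---is a harmless variant of the paper's choice $\vfonc_{1,\infty}=\afonc(0)\hat W$, legitimate since $N=1$, and in fact slightly cleaner because it only needs continuity of $|\afonc|$ at the origin. One caution on the last line: substituting $\vfoncsym_{1,1}=f(0)|\hat W|^2$ and $\limcons_{1,1}=1$ into~\eqref{eq:GammaDef} yields a leading constant $4\pi$, not the $2\pi$ in the stated $\sigma^2$; this is an internal inconsistency in the paper (compare~\eqref{eq:GammaDef} with~\eqref{eq:GammaMDef}) rather than an error in your computation.
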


\begin{remark}\label{rem:compareGiraitisEtAl}
Our CLT~(\ref{eq:nCLTCondEstSpec}) can be compared with  \cite[Eq.~3.9]{bhansali:giraitis:kokoszka:2007}, although the estimators are
different since ours involve a decimation and the one in~\cite{bhansali:giraitis:kokoszka:2007} is expresses as a weighted
integral of the standard periodogram.  In~(\ref{eq:nCLTCondEstSpec}), our $\gamma_n$ has a role similar to the $q=q_n$ for their estimator.
Our bias estimate~(\ref{eq:limitEsp}) has a faster decrease than the corresponding one $O(q^{-1})$
in~\cite{bhansali:giraitis:kokoszka:2007}, see the last display in their Section 3.
Our conditions also differ from those of~\cite{bhansali:giraitis:kokoszka:2007}. Our conditions on the weight sequence $\aseq(t)$ is
much more general, since we assume  a polynomial decrease neither of this sequence nor of $\aseq(t)-\aseq(t+1)$, as assumed
for the corresponding (causal) sequence $(\psi_j)$ in Assumption~2.1 of~\cite{bhansali:giraitis:kokoszka:2007}.
Standard results on spectral estimation (see e.g. \cite[Theorem~9.4.1]{anderson:1994}) usually assume the even stronger
condition $\sum_t|\aseq(t)|<\infty$.
On the other hand we do assume that the noise sequence $\{\xi_t\}$ has fourth finite moment 
(see~\ref{it:A1}) while only a finite $2+\beta$ moment (with $\beta>0$ arbitrary small) is assumed
in~\cite{bhansali:giraitis:kokoszka:2007}. It is an open question whether similar moment condition can be used for our estimator
$\hat{f}_n(0)$.  
\end{remark}

\section{Technical lemmas}\label{sec:technical-lemmas}

The following lemma will be used several times.

\begin{lemma}\label{lem:finiteFolding}
Let $g$ be a $(2\pi)$-periodic locally integrable function. Then for all positive integer $\decim$, the function defined by 
$$
g_\decim(\lambda)=\sum_{p=0}^{\decim-1} g(\decim^{-1}(\lambda+2p\pi))
$$
is $(2\pi)$-periodic. Moreover, one has
\begin{equation}
  \label{eq:JfoldingInt}
  \int_{-\pi}^\pi g(\lambda)\;\rmd\lambda=\decim^{-1}\;\int_{-\pi}^\pi g_\decim(\lambda) \;\rmd\lambda \; .
\end{equation}
\end{lemma}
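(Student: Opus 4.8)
The plan is to prove the two assertions separately, both by elementary manipulations: a reindexing argument for the periodicity and a change of variables plus periodicity for the integral identity.

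First I would establish the $(2\pi)$-periodicity of $g_\decim$. Replacing $\lambda$ by $\lambda+2\pi$ in the definition gives $g_\decim(\lambda+2\pi)=\sum_{p=0}^{\decim-1}g\big(\decim^{-1}(\lambda+2(p+1)\pi)\big)$. Reindexing with $q=p+1$, this becomes $\sum_{q=1}^{\decim}g\big(\decim^{-1}(\lambda+2q\pi)\big)$. The term $q=\decim$ is $g(\decim^{-1}\lambda+2\pi)$, which by the $(2\pi)$-periodicity of $g$ equals $g(\decim^{-1}\lambda)$, i.e. the term $q=0$. Hence the sum over $q\in\{1,\dots,\decim\}$ coincides with the sum over $q\in\{0,\dots,\decim-1\}$, which is exactly $g_\decim(\lambda)$.

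Next I would prove~(\ref{eq:JfoldingInt}). By linearity, $\int_{-\pi}^\pi g_\decim(\lambda)\,\rmd\lambda=\sum_{p=0}^{\decim-1}\int_{-\pi}^\pi g\big(\decim^{-1}(\lambda+2p\pi)\big)\,\rmd\lambda$; local integrability of $g$ makes each term finite. In the $p$-th integral I substitute $\mu=\decim^{-1}(\lambda+2p\pi)$, so $\rmd\lambda=\decim\,\rmd\mu$ and the range $\lambda\in[-\pi,\pi]$ becomes $\mu\in[\decim^{-1}(2p-1)\pi,\decim^{-1}(2p+1)\pi]$. Summing over $p=0,\dots,\decim-1$, these intervals are consecutive and abut to form the single interval $[-\decim^{-1}\pi,\,2\pi-\decim^{-1}\pi]$, which has length $2\pi$. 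Therefore $\int_{-\pi}^\pi g_\decim(\lambda)\,\rmd\lambda=\decim\int_{-\decim^{-1}\pi}^{2\pi-\decim^{-1}\pi}g(\mu)\,\rmd\mu$, and since $g$ is $(2\pi)$-periodic its integral over any interval of length $2\pi$ equals $\int_{-\pi}^\pi g$. Dividing by $\decim$ yields~(\ref{eq:JfoldingInt}).

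There is no real obstacle here; the only point requiring a little care is bookkeeping the endpoints in the change of variables so as to see that the shifted sub-intervals tile an interval of length exactly $2\pi$, together with the (standard) fact that the integral of a $(2\pi)$-periodic locally integrable function over any interval of length $2\pi$ is independent of the interval. Both the telescoping in the periodicity argument and the tiling in the integral argument are the same phenomenon viewed additively versus as a partition of the domain.
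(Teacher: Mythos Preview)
Your proof is correct and follows essentially the same approach as the paper: the same reindexing argument for periodicity, and the same change of variables plus tiling for the integral identity. The only cosmetic difference is that the paper carries out the change of variables over $[0,2\pi]$ (obtaining the tiled interval $[0,2\pi]$ directly) and then invokes periodicity to pass to $[-\pi,\pi]$, whereas you work on $[-\pi,\pi]$ from the start and land on the shifted interval $[-\decim^{-1}\pi,\,2\pi-\decim^{-1}\pi]$.
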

\begin{proof}
  Observe that, for all $\lambda\in\Rset$,
  \begin{align*}
  g_\decim(\lambda+2\pi)&=\sum_{p=0}^{\decim-1} g\left(\decim^{-1}\left(\lambda+\left(p+1\right)2\pi\right)\right)\\
  &=\sum_{p=1}^{\decim-1}
  g\left(\decim^{-1}\left(\lambda+2p\pi\right)\right)+g\left(\decim^{-1}\left(\lambda+\decim2\pi\right)\right)=g_\decim(\lambda)\,,
\end{align*}
  since $g(\decim^{-1}(\lambda+\decim2\pi))=g(\decim^{-1}\lambda)$ by the $(2\pi)$-periodicity of $g$. Hence  $g_\decim(\lambda)$ is $(2\pi)$-periodic.

  With a change of variable, one gets
  $$
  \decim^{-1}\int_{0}^{2\pi} g_\decim(\lambda) \;\rmd\lambda=\sum_{p=0}^{\decim-1} \int_{0}^{2\pi\decim^{-1}} g(\xi+2p\pi\decim^{-1})\;\rmd\xi
  =\int_{0}^{2\pi}g(\xi)\;\rmd\xi \; .
  $$
  Relation~(\ref{eq:JfoldingInt}) follows by $(2\pi)$-periodicity of the integrands. 
\end{proof}
The next lemma relates the rates of decrease of two functions with the rate of decrease of their convolution.
\begin{lemma}\label{lem:polynomDecrease}
Let $\delta>0$. For all $T>0$ and $t_0\in[0,T/2)$, we let $\poldecrease_{T,t_0}(t)$, $t\in\Rset$ be the even and $T$-periodic 
function such that   
$$
\poldecrease_{T,t_0}(t) = (1+|t-t_0|)^{-\delta} \quad\text{for all}\quad t\in [0,T/2] \; .
$$
Let $g$ be an integrable non-negative function on $\Rset$ such that
\begin{equation}\label{eq:g-Decrease-assump}
g(t)\leq c_0 |t|^{-\delta-1}\quad\text{for $|t|\geq1$.}
\end{equation}
Then there exists a positive constant $c$, depending only on $\delta$, $\|g\|_1=\int_{-\infty}^\infty g(s)\;ds$ and $c_0$ such 
that, for all $T>0$, $t_0\in[0,T/2)$, and $t\in[0,T/2]$,
\begin{equation}\label{eq:polynDecrease}
g\ast\poldecrease_{T,t_0}(t)=\int_{-\infty}^\infty g(t-u) \poldecrease_{T,t_0}(u) \,du \leq c (1+|t-t_0|)^{-\delta} \; .
\end{equation}
\end{lemma}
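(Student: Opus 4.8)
The goal is to bound the convolution $g\ast\poldecrease_{T,t_0}(t)$ uniformly in $T$ and $t_0$. The plan is to split the integration over $u\in\Rset$ according to where the periodic function $\poldecrease_{T,t_0}$ behaves like the basic decaying profile $(1+|\cdot-t_0|)^{-\delta}$ versus where periodicity forces it to be small in an unhelpful way, and then to separately control the contribution of $g$ near its peak (where $g$ may be large but integrable) versus $g$ in its tail (where the polynomial bound~\eqref{eq:g-Decrease-assump} applies).

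First I would reduce to the non-periodic problem. Write $\poldecrease_{T,t_0}(u)=\sum_{m\in\Zset}(1+|u-t_0-mT|)^{-\delta}\1(u-mT\in[-T/2,T/2])$, so that $\poldecrease_{T,t_0}(u)\le \Psi(u):=\sum_{m\in\Zset}(1+\mathrm{dist}(u-t_0, T\Zset))^{-\delta}$, but more simply: on each period-translate, $\poldecrease_{T,t_0}(u)$ equals $(1+|v|)^{-\delta}$ where $v=u-t_0-mT\in[-T/2,T/2]$ is the representative. The key elementary observation is that for $t\in[0,T/2]$ and $u$ in the period window centered at $mT$, the exponent argument satisfies $|t-u|\ge$ (something comparable to) the distance used, so that convolving with $g$ can only help. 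The cleanest route is: fix $t\in[0,T/2]$ and split $\int_{-\infty}^\infty g(t-u)\poldecrease_{T,t_0}(u)\,du = \int_{|t-u|\le 1} + \int_{|t-u|>1}$. On the first region, bound $\poldecrease_{T,t_0}(u)\le (1+|u-t_0|\wedge\ldots)$; since $|t-u|\le1$ we have $|u-t_0|\ge |t-t_0|-1$, hence $\poldecrease_{T,t_0}(u)\le C(1+|t-t_0|)^{-\delta}$ (using that $\poldecrease_{T,t_0}\le 1$ always, which handles $|t-t_0|\le 2$), and this region contributes at most $\|g\|_1\cdot C(1+|t-t_0|)^{-\delta}$.

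For the tail region $|t-u|>1$, use $g(t-u)\le c_0|t-u|^{-\delta-1}$ and bound
\[
\int_{|t-u|>1} c_0|t-u|^{-\delta-1}\poldecrease_{T,t_0}(u)\,du \le c_0\int_{\Rset} |t-u|^{-\delta-1}\1(|t-u|>1)\,(1+|\pi_T(u-t_0)|)^{-\delta}\,du,
\]
where $\pi_T$ denotes the signed representative in $[-T/2,T/2]$. Here the main obstacle is the uniformity in $T$: one must show the integral of $|t-u|^{-\delta-1}(1+|\pi_T(u-t_0)|)^{-\delta}$ is $\le C(1+|t-t_0|)^{-\delta}$ with $C$ independent of $T$. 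I would handle this by writing the real line as the union of the period windows $I_m=t_0+mT+[-T/2,T/2]$, so the integral becomes $\sum_{m}\int_{I_m}|t-u|^{-\delta-1}(1+|u-t_0-mT|)^{-\delta}\,du$. For $m=0$ this is a genuine convolution of two $\Rset$-integrable profiles and is $\le C(1+|t-t_0|)^{-\delta}$ by a standard lemma (convolution of $|\cdot|^{-\delta-1}\1_{|\cdot|>1}$, which is integrable, with $(1+|\cdot|)^{-\delta}$). For $m\ne0$, on $I_m$ one has $|t-u|\ge c\,|m|T$ (since $t\in[0,T/2]$, $t_0\in[0,T/2)$, so $I_m$ lies at distance $\gtrsim(|m|-1)T$ from $t$), hence $\sum_{m\ne0}\int_{I_m}|t-u|^{-\delta-1}(1+|u-t_0-mT|)^{-\delta}\,du \le \sum_{m\ne0}(c|m|T)^{-\delta-1}\cdot\int_{-T/2}^{T/2}(1+|v|)^{-\delta}\,dv$, and since $\int_{-T/2}^{T/2}(1+|v|)^{-\delta}\,dv$ is $O(1)$ if $\delta>1$ and $O(T^{1-\delta})$ (or $O(\log T)$) for $\delta\le1$, the sum over $m$ is $O(T^{-\delta-1}\cdot T^{(1-\delta)_+}) = o(1)$, and is in any case $\le C(1+|t-t_0|)^{-\delta}$ since $|t-t_0|\le T/2$ forces $(1+|t-t_0|)^{-\delta}\ge (1+T/2)^{-\delta}$. (If $\delta\le1$ the geometric-type factor $\sum_{m\ge1}m^{-\delta-1}$ still converges, and $T^{-\delta-1}\int_{-T/2}^{T/2}(1+|v|)^{-\delta}dv\le C\,T^{-\delta-1}\cdot T = C T^{-\delta}\le C\,2^{\delta}(1+|t-t_0|)^{-\delta}$, so the bound survives for all $\delta>0$.) Combining the two regions gives~\eqref{eq:polynDecrease} with $c$ depending only on $\delta$, $\|g\|_1$ and $c_0$.

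The main obstacle, as indicated, is the $m\ne0$ terms: one must make sure the periodization of $\poldecrease_{T,t_0}$ does not create a $T$-dependent blow-up, and the resolution is the crude-but-sufficient estimate that those far windows are at distance $\gtrsim|m|T$ from $t$, together with $|t-t_0|\le T/2$ to reabsorb the worst case into the target bound. Everything else is a routine convolution estimate plus the trivial bound $\poldecrease_{T,t_0}\le1$ to cover small $|t-t_0|$.
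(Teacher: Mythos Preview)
Your strategy---split at $|t-u|=1$, then unfold the periodicity into windows $I_m$---differs from the paper's, and while the outline is workable, two steps are wrong as written. First, the inequality $h_{T,t_0}(u)\le(1+|\pi_T(u-t_0)|)^{-\delta}$ goes the wrong way: because $h_{T,t_0}$ is even \emph{and} $T$-periodic, its peaks lie at $\{\pm t_0\}+T\Zset$, so in fact $h_{T,t_0}(u)=(1+\mathrm{dist}(u,\{\pm t_0\}+T\Zset))^{-\delta}\ge(1+|\pi_T(u-t_0)|)^{-\delta}$. You must also track the peaks at $-t_0+T\Zset$; this is harmless since $\mathrm{dist}(t,-t_0+T\Zset)\ge|t-t_0|$ for $t,t_0\in[0,T/2]$, but it has to be said. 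Second, the claim $|t-u|\ge c|m|T$ on $I_m$ fails for $|m|=1$ (take $t_0=0$, $t=T/2$, $u=T/2\in I_1$); those two windows must be handled like $m=0$ via your ``standard lemma'', which you should also actually prove (e.g.\ by splitting at $|u-t_0|\le|t-t_0|/2$).

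The paper's argument avoids unfolding entirely and is shorter. It splits at $|t-u|=|t-t_0|/2$ rather than at $1$. For $t\in[t_0+2,T/2]$, if $|t-u|\le|t-t_0|/2$ then $u\in[(t+t_0)/2,\,T-(t+t_0)/2]$, an interval symmetric about $T/2$; since $h_{T,t_0}$ is decreasing on $[t_0,T/2]$ and symmetric about $T/2$, one gets $h_{T,t_0}(u)\le h_{T,t_0}((t+t_0)/2)=(1+|t-t_0|/2)^{-\delta}$ directly. The far piece $|t-u|>|t-t_0|/2$ uses only $h_{T,t_0}\le1$ and the tail bound~\eqref{eq:g-Decrease-assump} on $g$. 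The trivial bound $g\ast h_{T,t_0}\le\|g\|_1$ covers $|t-t_0|\le2$. This exploits the monotonicity and symmetry of $h_{T,t_0}$ on a single period and never needs the window bookkeeping or the auxiliary convolution lemma.
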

\begin{proof}
Let $t_0\in[0,T/2)$.
We shall use the bound, valid for all $t\in\Rset$, 
\begin{equation}
  \label{eq:convolDecreaseSmall-t}
  g\ast\poldecrease_{T,t_0}(t)\leq \|g\|_1 \;.
\end{equation}
This bound yields~(\ref{eq:polynDecrease}) only for $t$ close enough to $t_0$. 
We shall derive a different bound valid only for $t\in[0,T/2]$ with $|t-t_0|\geq2$, namely
\begin{equation}
  \label{eq:convolDecreaseLarge-t}
g\ast\poldecrease_{T,t_0}(t)\leq 2^{1-\delta}\;c_0\;\delta^{-1}\;|t-t_0|^{-\delta}  + \|g\|_1 \; (1+|t-t_0|/2)^{-\delta}\;.
\end{equation}
Applying~(\ref{eq:convolDecreaseSmall-t}) for $|t-t_0|\leq2$ 
and~(\ref{eq:convolDecreaseLarge-t}) for $|t-t_0|\geq2$ yields~(\ref{eq:polynDecrease}).

Hence it only remains to establish~(\ref{eq:convolDecreaseLarge-t}) for $t\in[0,T/2]$ with $|t-t_0|\geq2$. We shall suppose
that $t\in[t_0+2,T/2]$ (the case $t\in[0,t_0-2]$ is obtained similarly).    
Let $u$ such that $|t-u|\leq |t-t_0|/2$. Then  
$t-(t-t_0)/2\leq u\leq t+(t-t_0)/2$ and thus, using that $t\leq T/2$ implies $t+t/2\leq T-t/2$, we get  $(t+t_0)/2\leq u\leq
T-(t+t_0)/2$. Observe that the middle point between $(t+t_0)/2$ and $T-(t+t_0)/2$ is $T/2$. Since
$\poldecrease_{T,t_0}(u)$ is decreasing on $[t_0,T/2]$, and symmetric around $T/2$,
we get $\poldecrease_{T,t_0}(u)\leq\poldecrease_{T,t_0}((t+t_0)/2)=(1+|t-t_0|/2)^{-\delta}$.
Hence we may bound $\poldecrease_{T,t_0}(u)$ by 1 for $|t-u|>|t-t_0|/2$ and by $(1+|t-t_0|/2)^{-\delta}$ otherwise, which gives     
$$
\int_{-\infty}^\infty g(t-u) \poldecrease_{T,t_0}(u) \,du \leq \int_{|s|>|t-t_0|/2} g(s) \;ds + \|g\|_1 \;
(1+|t-t_0|/2)^{-\delta}\;.   
$$
Since $|t-t_0|\geq2$ we may apply the bound~(\ref{eq:g-Decrease-assump}) in the integral of the RHS of the previous
display. Hence we get~(\ref{eq:convolDecreaseLarge-t}), which concludes the proof.
\end{proof}

The following lemma is used, in particular, to bound $\widehat{f}_j$ in the proof of Theorem~\ref{thm:CLTlinearLocale}.
It will be used again in the proof of Lemma~\ref{lem:varlimitCLTlinear} below. 
Applying it, one can bound $g_{j,\decim_j}(\lambda)$ independently of $j$ and $\lambda$, where $g_{j,\decim_j}$ is 
defined as in Lemma~\ref{lem:finiteFolding} with $g$ replaced by $g_j$, and the sequence $g_j$ satisfies a uniform bound of  
the form~(\ref{eq:unfiBoundvfonc}), namely
$$
\sup_{j\geq0}\sup_{\lambda\in[-\pi,\pi)} |g_j(\lambda)|(1+\decim\left| |\lambda| - \lambda_{j}\right|)^\delta <\infty\; ,
$$ 
with $\lambda_j\to\lambda_\infty\in[0,\pi)$ as $j\to\infty$.

\begin{lemma}\label{lem:DomConvArgumenttt}
Let $\delta>1/2$. Then
\begin{equation}\label{eq:supsupgj}
\sup_{t\in\Rset}\sup_{t'\in\Rset}\sum_{p\in\Zset}(1+\left||t+2p\pi|-t'\right|)^{-2\delta} < \infty \;.  
\end{equation}
Moreover, as $u\to\infty$,
\begin{equation}\label{eq:supsupgjLim}
\sup_{t\in\Rset}\sup_{t'\in\Rset}\sum_{p\in\Zset}(1+\left||t+2p\pi|-t'\right|)^{-\delta}(1+\left||t+2p\pi|-t'-u\right|)^{-\delta}
\to0 \;.   
\end{equation}
\end{lemma}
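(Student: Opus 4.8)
The plan is to reduce both statements to elementary estimates on sums of shifted polynomial tails, exploiting the fact that as $p$ ranges over $\Zset$, the points $t+2p\pi$ are $2\pi$-spaced, so only finitely many of them can be within any bounded distance of a fixed target. First I would establish~\eqref{eq:supsupgj}. Fix $t,t'\in\Rset$. For each $p\in\Zset$ write $a_p=|t+2p\pi|$; the key observation is that for $p\geq0$ one has $a_p=t+2p\pi$ or $a_p=-(t+2p\pi)$ depending on the sign of $t+2p\pi$, but in all cases $|a_p-a_{p'}|\geq 2\pi$ whenever $p,p'$ have the same sign and $p\neq p'$, and there are at most two values of $p$ for which $t+2p\pi$ has "small" absolute value. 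Consequently the multiset $\{a_p:p\in\Zset\}$ can be split into at most two increasing arithmetic-progression-like sequences with gaps $\geq 2\pi$. For such a sequence $b_0<b_1<\cdots$ with $b_{m+1}-b_m\geq 2\pi$, the sum $\sum_m (1+|b_m-t'|)^{-2\delta}$ is bounded, uniformly in $t'$, by $C\sum_{m\geq0}(1+2\pi m)^{-2\delta}$, which converges since $2\delta>1$; here one compares $(1+|b_m-t'|)^{-2\delta}$ to a shifted copy of the decreasing sequence $(1+2\pi|m-m_0|)^{-2\delta}$ where $m_0$ is the index nearest to $t'$. Summing the two pieces gives a bound independent of $t$ and $t'$, which is~\eqref{eq:supsupgj}.

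For~\eqref{eq:supsupgjLim} the same decomposition applies, but now I would use the extra parameter $u$ to force the sum to vanish. Fix $t,t',u$ and again reduce to a single gap-$\geq 2\pi$ sequence $(b_m)$. Each term is $(1+|b_m-t'|)^{-\delta}(1+|b_m-t'-u|)^{-\delta}$. Split the index set at the midpoint $t'+u/2$: if $b_m\leq t'+u/2$ then $|b_m-t'-u|\geq u/2$, so that factor is at most $(1+u/2)^{-\delta}$; if $b_m\geq t'+u/2$ then symmetrically $|b_m-t'|\geq u/2$. In either case the product is bounded by $(1+u/2)^{-\delta}(1+|b_m-\text{(the nearer target)}|)^{-\delta}$, and summing over $m$ the residual single-tail sum $\sum_m(1+2\pi|m-m_\ast|)^{-\delta}$ is bounded uniformly (this is where one needs only $\delta>1/2$, since $\delta>1/2$ suffices for finiteness here is actually \emph{not} automatic --- but note $\delta>1/2$ does give $\sum (1+2\pi m)^{-\delta}<\infty$ only when $\delta>1$; to handle $1/2<\delta\leq 1$ one instead keeps a bit of decay from both factors). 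The cleaner route, which I would actually follow, is: bound each product term by $\bigl[(1+|b_m-t'|)^{-2\delta}+(1+|b_m-t'-u|)^{-2\delta}\bigr]\cdot\mathbf{1}(|b_m-t'|\vee|b_m-t'-u|\geq u/2)\cdot(1+u/2)^{0}$ --- more precisely split according to whether $|b_m - t'|\ge u/2$ or $|b_m-t'-u|\ge u/2$, bound the far factor by $(1+u/2)^{-\delta}$ pointwise and keep the near factor, then sum the near factor using~\eqref{eq:supsupgj} with $2\delta$ replaced by... no: one simply bounds the product by $(1+u/2)^{-\delta}\bigl[(1+|b_m-t'|)^{-\delta}+(1+|b_m-t'-u|)^{-\delta}\bigr]$ and then applies a one-target version of the $2\delta$ estimate. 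To make the one-target sum converge with only $\delta>1/2$, I would instead exploit that BOTH factors retain half a power: on the region $|b_m-t'|\geq u/2$ use $(1+|b_m-t'|)^{-\delta}\leq (1+u/2)^{-\delta/2}(1+|b_m-t'|)^{-\delta/2}$ and likewise, reducing to $\sum_m (1+|b_m-t'|)^{-\delta}$ type sums weighted by $(1+u/2)^{-\delta/2}\to 0$; but $\sum_m(1+|b_m-t'|)^{-\delta}$ with $\delta\in(1/2,1]$ still need not converge.

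The honest fix --- and the step I expect to be the only real subtlety --- is to keep one \emph{full} power $2\delta$ on the near factor and extract the smallness from the far factor: for indices with $b_m\le t'+u/2$, bound the product by $(1+|b_m-t'|)^{-\delta}(1+u/2)^{-\delta}$ and note $(1+|b_m-t'|)^{-\delta}\le (1+|b_m-t'|)^{-2\delta}\cdot(1+|b_m-t'|)^{\delta}$ --- no. Rather: for those indices, $(1+|b_m-t'-u|)^{-\delta}\le (1+u-|b_m-t'|)^{-\delta}$ when $b_m - t'\le u$, and one sums $\sum_{m:\,|b_m-t'|\le u/2}(1+|b_m-t'|)^{-\delta}(1+u/2)^{-\delta}\le (1+u/2)^{-\delta}\cdot C\,u^{1-\delta}$ for $\delta<1$, which tends to $0$ iff $2\delta-1>0$, i.e. $\delta>1/2$ --- and for $\delta\ge1$ the same sum is $O((1+u/2)^{-\delta}\log u)\to0$. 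The remaining indices ($|b_m - t'|> u/2$, equivalently the far tail of the first factor) are handled symmetrically, and the genuinely far indices (both $|b_m-t'|$ and $|b_m-t'-u|$ large, say $\ge u/2$ and $\ge$ the gap between them) contribute at most $(1+u/2)^{-\delta}\sum_m(1+|b_m-t'|)^{-\delta}\mathbf 1(|b_m-t'|\ge u/2)$, again $O(u^{1-2\delta}\vee (u^{-\delta}\log u))\to 0$. Taking the supremum over $t,t'$ and over the (at most two) subsequences, all bounds are uniform, so the supremum tends to $0$ as $u\to\infty$, proving~\eqref{eq:supsupgjLim}. The bookkeeping of the case $1/2<\delta\le1$ versus $\delta>1$ in these tail sums is the one place where care is needed; everything else is the routine "shift the worst term to the origin and compare with $\sum(1+2\pi m)^{-s}$" argument.
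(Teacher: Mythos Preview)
Your argument for~\eqref{eq:supsupgj} is sound. For~\eqref{eq:supsupgjLim}, however, there is a real gap in the ``genuinely far'' case. You bound the contribution of indices $m$ with both $|b_m-t'|\ge u/2$ and $|b_m-t'-u|\ge u/2$ by
\[
(1+u/2)^{-\delta}\sum_m(1+|b_m-t'|)^{-\delta}\,\mathbf{1}\bigl(|b_m-t'|\ge u/2\bigr),
\]
and assert this is $O(u^{1-2\delta}\vee u^{-\delta}\log u)$. But for $\delta\in(1/2,1]$ the tail sum on the right diverges: the $b_m$ form an unbounded sequence with gaps $\ge2\pi$, so the sum is comparable to $\sum_{k\ge u/(4\pi)}k^{-\delta}=+\infty$. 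You have discarded too much by replacing the second factor by its crudest value $(1+u/2)^{-\delta}$. The fix is to keep both factors in this region: if $b_m>t'+3u/2$ then $|b_m-t'-u|\ge|b_m-t'|/3$, so the product is at most $3^\delta(1+|b_m-t'|)^{-2\delta}$, and the tail sum with exponent $2\delta>1$ is $O(u^{1-2\delta})\to0$; for $b_m<t'-u/2$ one has $|b_m-t'-u|\ge|b_m-t'|$ directly, giving the same $2\delta$ decay. With this correction your three-region split does work.

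The paper takes a different and tidier route that sidesteps this case analysis: it uses the $2\pi$-periodicity in $t$ to reduce to $|t-t'|\le\pi$, and then bounds $\bigl||t+2p\pi|-t'-b\bigr|$ from below by either $|2p\pi-b|-\pi$ or $|2t'+2p\pi+b|-\pi$ according to the sign of $t+2p\pi$. After a further $\pi$-periodicity reduction in $t'$, the double supremum collapses to the single $(t,t')$-free sum $\sum_{p\in\Zset}(|2p\pi|+\pi)^{-\delta}(|2p\pi+u|+\pi)^{-\delta}$, whose vanishing as $u\to\infty$ follows at once (split $|p|\le M$ versus $|p|>M$ and apply Cauchy--Schwarz on the tail, using that $\sum_p(|2p\pi|+\pi)^{-2\delta}<\infty$). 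This avoids the $\delta\le1$ subtlety you flagged entirely.
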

\begin{proof}
Let $S(t,t')=\sum_{p\in\Zset}(3\pi+\left||t+2p\pi|-t'\right|)^{-2\delta}$. Since, for any $t'$, $t\mapsto S(t,t')$ is
$(2\pi)$--periodic we have 
\begin{equation}\label{eq:2piperiodicInt}
\sup_{t\in\Rset}\sup_{t'\in\Rset}S(t,t')=\sup_{t'\in\Rset}\sup_{t:|t-t'|\leq\pi}S(t,t') \;. 
\end{equation}
Suppose that $t,t'\in\Rset$ are such that $|t-t'|\leq\pi$. 
Then for any $a,b\in\Rset$, we have, if  $t+a\geq0$,
\begin{equation}\label{eq:ttprimLeqPi1}
\left||t+a|-t'-b\right|\geq |a-b|-\pi\;,
\end{equation}
and,  if  $t+a\leq0$,
\begin{equation}\label{eq:ttprimLeqPi2}
\left||t+a|-t'-b\right| = \left| t-t' + 2t' + a+b\right|\geq |2t'+a+b| - \pi \;.
\end{equation}
Adding $3\pi$ to each of the last two displays with $a=2p\pi$ and $b=0$, we get that, for all $|t-t'|\leq\pi$ and $p\in\Zset$,   
\begin{equation}\label{eq:ttprimLeqPi1et2}
(3\pi+\left||t+2p\pi|-t'\right|)^{-2\delta} \leq  (2|p|\pi+2\pi)^{-2\delta} +  (|2t'+2p\pi| + 2\pi )^{-2\delta} \; .
\end{equation}
Since $\sum_{p\in\Zset}(2|p|\pi+\pi)^{-2\delta}<\infty$ and $\sup_{t'\in\Rset}\sum_{p\in\Zset} (|2t'+2p\pi| +
\pi)^{-2\delta}<\infty$, Relation~(\ref{eq:2piperiodicInt}) gives that $\sup_{t\in\Rset}\sup_{t'\in\Rset}S(t,t')<\infty$ and~(\ref{eq:supsupgj}) follows. 

We now prove~(\ref{eq:supsupgjLim}). Let 
$$
S(t,t',u)=\sum_{p\in\Zset}(3\pi+\left||t+2p\pi|-t'\right|)^{-\delta}(3\pi+\left||t+2p\pi|-t'-u\right|)^{-\delta}\;.
$$
As above, we have
\begin{equation}\label{eq:2piperiodicInt2}
\sup_{t\in\Rset}\sup_{t'\in\Rset}S_j(t,t')=\sup_{t'\in\Rset}\sup_{t:|t-t'|\leq\pi}S_j(t,t') \;. 
\end{equation}
Suppose that $t,t'\in\Rset$ are such that $|t-t'|\leq\pi$. 
Adding $3\pi$ to~(\ref{eq:ttprimLeqPi1}) and~(\ref{eq:ttprimLeqPi2}) 
with $a=2p\pi$ and $b=u$, we have
$$
(3\pi+\left||t+2p\pi|-t'-u\right|)^{-\delta}
\leq  (|2p\pi-u|+2\pi)^{-\delta} +  (|2t'+2p\pi+u| + 2\pi )^{-\delta} \; .
$$
Using~(\ref{eq:2piperiodicInt2}),~(\ref{eq:ttprimLeqPi1et2}) and the previous display, we obtain 
\begin{align*}
\sup_{t\in\Rset}\sup_{t'\in\Rset}S(t,t',u)\leq&\sum_{p\in\Zset}(|2p\pi|+2\pi)^{-\delta}(|2p\pi-u|+2\pi)^{-\delta}\\
&+\sup_{t'\in\Rset}\sum_{p\in\Zset}
(|2t'+2p\pi| + 2\pi )^{-\delta}(|2t'+2p\pi+u| + 2\pi )^{-\delta}\\
&+\sup_{t'\in\Rset}\sum_{p\in\Zset}(|2p\pi|+2\pi)^{-\delta}(|2t'+2p\pi+u| + 2\pi )^{-\delta}\\
&+\sup_{t'\in\Rset}\sum_{p\in\Zset}(|2t'+2p\pi| + 2\pi )^{-\delta}(|2p\pi-u|+2\pi)^{-\delta}\;.
\end{align*}
Since 
the three functions in $t'$ appearing in the right-hand side of the last display 
are $\pi$-periodic the $\sup_{t'\in\Rset}$ can be replaced by $\sup_{|t'|\leq\pi/2}$. Since 
$|2t'+2p\pi|\geq |2p\pi|-\pi$ and $|2t'+2p\pi+u|\geq|2p\pi+u|-\pi$
for $|t'|\leq\pi/2$, we thus obtain
$$
\sup_{t\in\Rset}\sup_{t'\in\Rset}S(t,t',u)\leq 
4\sum_{p\in\Zset}(|2p\pi| + \pi )^{-\delta}(|2p\pi+u|+\pi )^{-\delta}\to 0 \quad\text{as $u\to\infty$}\;,
$$
which conclude the proof.
\end{proof}

The following lemma will be used in the proof of Lemma~\ref{lem:varlimitCLTlinear}.

\begin{lemma}\label{lem:empVarL2bound}
Let $p$ be a positive integer. For all $\Cset^p$-valued function $\bg\in L^2(-\pi,\pi)$ and $n\geq1$, define 
\begin{equation}\label{eq:Nn}
M_n(\bg)\eqdef \left\{\sum_{k\in\Zset} \left(1-\frac{|k|}{n}\right)_{\!\!\!+} |\bc_k|^2\right\}^{1/2} =
 \left\{\sum_{k=-n+1}^{n-1} \left(1-\frac{|k|}{n}\right) |\bc_k|^2\right\}^{1/2} 
\;,
\end{equation}
where $\bc_k=(2\pi)^{-1/2}\int_{-\pi}^\pi \bg(\lambda)\,\rme^{\rmi k \lambda}\,\rmd\lambda$ and  $|\cdot|$ denotes the Euclidean
norm in any dimension. Then, for all $\bg_1$ and $\bg_2$ in $L^2(-\pi,\pi)$,   
\begin{equation}\label{eq:NnLipschitz}
|M_n(\bg_1)-M_n(\bg_2)| \leq \left(\int_{-\pi}^\pi |\bg_1(\lambda)-\bg_2(\lambda)|^2\,\rmd\lambda\right)^{1/2} \; .
\end{equation}
Moreover, for all $\bg$ in $L^2(-\pi,\pi)$, as $n\to\infty$, 
\begin{equation}\label{eq:NnLim}
M_n(\bg) \to \left(\int_{-\pi}^\pi |\bg(\lambda)|^2\,\rmd\lambda\right)^{1/2} \; .
\end{equation}
\end{lemma}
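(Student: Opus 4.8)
The plan is to view $M_n$ as the Hilbert-space norm on $\Cset^p$-valued sequences $(\bc_k)_{k\in\Zset}$ induced by the finite weight sequence $w_k^{(n)}\eqdef(1-|k|/n)_+\in[0,1]$, and to exploit that the Fourier map $\bg\mapsto(\bc_k)_{k\in\Zset}$ is linear and isometric from $L^2(-\pi,\pi)$ onto $\ell^2(\Zset;\Cset^p)$ for the normalisation used in~(\ref{eq:Nn}).

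For the Lipschitz bound~(\ref{eq:NnLipschitz}), the key observation is that $M_n$ satisfies the triangle inequality: $M_n(\bg)=\big(\sum_{k\in\Zset} w_k^{(n)}|\bc_k|^2\big)^{1/2}$ is simply the $L^2$-norm of $(\bc_k)$ against the measure on $\Zset$ that puts mass $w_k^{(n)}$ at $k$, so Minkowski's inequality applies verbatim, the fact that the $\bc_k$ are $\Cset^p$-valued and $|\cdot|$ is the Euclidean norm changing nothing. Hence $M_n(\bg_1)\le M_n(\bg_1-\bg_2)+M_n(\bg_2)$ and, symmetrically, $|M_n(\bg_1)-M_n(\bg_2)|\le M_n(\bg_1-\bg_2)$. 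Since $w_k^{(n)}\le1$ for every $k$ and $n$, one has $M_n(\bg)^2\le\sum_{k\in\Zset}|\bc_k|^2=\int_{-\pi}^\pi|\bg(\lambda)|^2\,\rmd\lambda$ by Parseval's theorem; applying this with $\bg_1-\bg_2$ in place of $\bg$ gives~(\ref{eq:NnLipschitz}).

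For the convergence~(\ref{eq:NnLim}), I would fix $\bg\in L^2(-\pi,\pi)$ and note that, for each $k$, $w_k^{(n)}$ is non-decreasing in $n$ and $w_k^{(n)}\to1$ as $n\to\infty$, so the non-negative summands $w_k^{(n)}|\bc_k|^2$ increase to $|\bc_k|^2$. The monotone convergence theorem on $\Zset$ then yields $M_n(\bg)^2=\sum_{k\in\Zset}w_k^{(n)}|\bc_k|^2\to\sum_{k\in\Zset}|\bc_k|^2=\int_{-\pi}^\pi|\bg(\lambda)|^2\,\rmd\lambda$ (Parseval again), and~(\ref{eq:NnLim}) follows by taking square roots. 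One may equally invoke dominated convergence with the summable dominating sequence $|\bc_k|^2$, which avoids checking monotonicity.

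There is no substantial obstacle here: the only points to get right are that $M_n$ genuinely obeys the triangle inequality (Minkowski's inequality in a weighted $\ell^2$ space, valid for $\Cset^p$-valued sequences) and that the normalisation of the Fourier coefficients in~(\ref{eq:Nn}) is exactly the one for which Parseval's identity holds with no extra constant, so that $\sum_{k\in\Zset}|\bc_k|^2=\int_{-\pi}^\pi|\bg|^2$ as used twice above.
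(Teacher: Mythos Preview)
Your argument is correct. The paper itself does not prove this lemma in-text; it merely cites \cite[Lemma~1 (Appendix~B)]{moulines-roueff-taqqu-2007a}. Your self-contained proof---recognising $M_n$ as a weighted $\ell^2$ norm so that the reverse triangle (Minkowski) inequality gives $|M_n(\bg_1)-M_n(\bg_2)|\le M_n(\bg_1-\bg_2)$, then bounding $M_n(\bg_1-\bg_2)$ by the full $\ell^2$ norm via $w_k^{(n)}\le1$ and Parseval, and finally using monotone (or dominated) convergence for~(\ref{eq:NnLim})---is exactly the natural route and is complete as written. The Parseval normalisation check is right: with $\bc_k=(2\pi)^{-1/2}\int_{-\pi}^{\pi}\bg(\lambda)\rme^{\rmi k\lambda}\rmd\lambda$ one indeed has $\sum_k|\bc_k|^2=\int_{-\pi}^{\pi}|\bg|^2$, and the monotonicity $w_k^{(n)}\uparrow1$ holds since $(1-|k|/n)_+$ is nondecreasing in $n$ for each fixed $k$.
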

\begin{proof}
See~\cite[Lemma~1 (Appendix~B)]{moulines-roueff-taqqu-2007a}.
\end{proof}
 
The following lemmas are used to compute the limiting covariances~(\ref{eq:GammaMDef}) and~(\ref{eq:GammaDef}).

\begin{lemma}\label{lem:DecompAjBj}
Let $\{Z_{i,j,k},\;i=1,2, j\geq0, k\in\Zset\}$ be an array of $2$--dimensional decimated linear processes as defined by~(\ref{eq:Zdef}).
Assume~\ref{it:A1}. Then for all $j\geq0$ and all $n\geq1$, one has
\begin{equation}
  \label{eq:AjBjDecomp}
  \frac1n\PCov\left(\sum_{k=0}^{n-1} Z_{1,j,k}^2, \sum_{k=0}^{n-1} Z_{2,j,k}^2\right)
  = 2 A_j(n) + \kappa_4 B_j(n) \; ,
\end{equation}
where
\begin{equation}
  \label{eq:AjDef}
A_j(n)=\sum_{\tau=-n+1}^{n-1} (1-|\tau|/n) \left(\sum_{u\in\Zset} \vseq_{1,j}(u)\vseq_{2,j}(\decim_j\tau+u)\right)^2 
\end{equation}
and
\begin{equation}
  \label{eq:BjDef}
B_j(n)=\sum_{u\in\Zset}\vseq_{1,j}^2(u)\, \sum_{\tau=-n+1}^{n-1} (1-|\tau|/n) \, \vseq_{2,j}^2(\decim_j\tau+u) \; .
\end{equation}
\end{lemma}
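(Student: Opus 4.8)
The plan is to expand each square $Z_{i,j,k}^2$ through the linear representation~(\ref{eq:Zdef}), reduce the covariance to a quadruple sum over the white--noise indices, evaluate the elementary covariances $\PCov(\xi_s\xi_t,\xi_{s'}\xi_{t'})$, and finally sum over $k,k'\in\{0,\dots,n-1\}$ using a Fej\'er--type identity.

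First I would record the elementary fact that, under~\ref{it:A1}, the moment--cumulant relation for the i.i.d.\ centered sequence $\{\xi_t\}$ (with $\PE[\xi_0^2]=1$ and $\kappa_4=\PE[\xi_0^4]-3$) yields, for all integers $s,t,s',t'$,
\[
\PCov\left(\xi_s\xi_t,\xi_{s'}\xi_{t'}\right)=\1(s=s')\1(t=t')+\1(s=t')\1(t=s')+\kappa_4\,\1(s=t=s'=t')\;.
\]
Since $\sum_t\vseq_{i,j}^2(t)<\infty$, a Cauchy--Schwarz estimate on the $\ell^2$ kernels shows that the quadruple sum obtained below is absolutely summable; this both guarantees that each $Z_{i,j,k}^2$ has finite variance (so the left-hand side of~(\ref{eq:AjBjDecomp}) is well defined) and licenses the interchange of expectation and summation via Fubini. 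No appeal to stationarity in $k$ is needed, as the covariance is computed directly from~(\ref{eq:Zdef}).

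Next, writing $Z_{i,j,k}^2=\sum_{s,t\in\Zset}\vseq_{i,j}(\decim_jk-s)\vseq_{i,j}(\decim_jk-t)\,\xi_s\xi_t$ and substituting the displayed formula into
\[
\PCov\left(Z_{1,j,k}^2,Z_{2,j,k'}^2\right)=\sum_{s,t,s',t'}\vseq_{1,j}(\decim_jk-s)\vseq_{1,j}(\decim_jk-t)\vseq_{2,j}(\decim_jk'-s')\vseq_{2,j}(\decim_jk'-t')\,\PCov(\xi_s\xi_t,\xi_{s'}\xi_{t'})\;,
\]
the three indicator terms collapse the quadruple sum. The first two terms each reduce, after the substitution $u=\decim_jk-s$ (resp.\ $u=\decim_jk-t$), to the factorized quantity $\bigl(\sum_{u\in\Zset}\vseq_{1,j}(u)\vseq_{2,j}(\decim_j(k'-k)+u)\bigr)^2$, while the diagonal term gives $\kappa_4\sum_{u\in\Zset}\vseq_{1,j}^2(u)\vseq_{2,j}^2(\decim_j(k'-k)+u)$. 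Thus $\PCov(Z_{1,j,k}^2,Z_{2,j,k'}^2)$ depends on $(k,k')$ only through $\tau=k'-k$ and equals $2\,c_j(\tau)^2+\kappa_4\,d_j(\tau)$, with $c_j(\tau)=\sum_u\vseq_{1,j}(u)\vseq_{2,j}(\decim_j\tau+u)$ and $d_j(\tau)=\sum_u\vseq_{1,j}^2(u)\vseq_{2,j}^2(\decim_j\tau+u)$.

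Finally I would sum over $k,k'\in\{0,\dots,n-1\}$ using $\sum_{k,k'=0}^{n-1}\phi(k'-k)=\sum_{\tau=-n+1}^{n-1}(n-|\tau|)\phi(\tau)$, divide by $n$, and identify the two resulting sums: $n^{-1}\sum_{k,k'}2c_j(k'-k)^2=2\sum_{\tau}(1-|\tau|/n)c_j(\tau)^2=2A_j(n)$ from~(\ref{eq:AjDef}), and, after one further (Tonelli-justified) interchange of the $\tau$-- and $u$--sums, $n^{-1}\sum_{k,k'}\kappa_4 d_j(k'-k)=\kappa_4\sum_u\vseq_{1,j}^2(u)\sum_{\tau}(1-|\tau|/n)\vseq_{2,j}^2(\decim_j\tau+u)=\kappa_4B_j(n)$ from~(\ref{eq:BjDef}). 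This gives~(\ref{eq:AjBjDecomp}). The only genuinely delicate point is the absolute-summability/Fubini step of the first paragraph; once it is in place the rest is bookkeeping.
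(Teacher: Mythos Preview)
Your proposal is correct and follows essentially the same approach as the paper: the paper invokes the cumulant identity $\PCov(Z_{1,j,k}^2,Z_{2,j,k'}^2)=2\PCov^2(Z_{1,j,k},Z_{2,j,k'})+\mathrm{cum}(Z_{1,j,k},Z_{1,j,k},Z_{2,j,k'},Z_{2,j,k'})$ and then evaluates each piece via the linear representation, whereas you derive the same decomposition by expanding $\PCov(\xi_s\xi_t,\xi_{s'}\xi_{t'})$ directly---but the resulting computation and the subsequent Fej\'er summation over $k,k'$ are identical.
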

\begin{proof}
Using a standard formula for cumulants of products, we have
\begin{align*}
\PCov\left(\sum_{k=0}^{n-1} Z_{1,j,k}^2, \sum_{k=0}^{n-1} Z_{2,j,k}^2\right)
= &\sum_{k=0}^{n-1}  \sum_{k'=0}^{n-1} \PCov\left( Z_{1,j,k}^2,Z_{2,j,k'}^2\right)\\
= &2 \, \sum_{k=0}^{n-1}  \sum_{k'=0}^{n-1} \PCov^2 \left( Z_{1,j,k},Z_{2,j,k'}\right)\\
& + \sum_{k=0}^{n-1}  \sum_{k'=0}^{n-1} \mathrm{cum}\left(Z_{1,j,k},Z_{1,j,k},Z_{2,j,k'},Z_{2,j,k'}\right) \; .
\end{align*}
By definition of $\{Z_{i,j,k},\,i=1,2,k\in\Zset\}$ the covariance and the fourth-order cumulant in the previous
display  read respectively
$$
\PCov \left( Z_{1,j,k},Z_{2,j,k'}\right) =\sum_{t\in\Zset}\vseq_{1,j}(\decim_jk-t)\vseq_{2,j}(\decim_jk'-t)
$$
and
$$
\mathrm{cum}\left(Z_{1,j,k},Z_{1,j,k},Z_{2,j,k'},Z_{2,j,k'}\right)=
\kappa_4\sum_{t\in\Zset}\vseq_{1,j}^2(\decim_jk-t)\vseq_{2,j}^2(\decim_jk'-t) \; .
$$
The two last displays thus give~(\ref{eq:AjBjDecomp}).
\end{proof}

\begin{lemma}\label{lem:varlimitCLTlinear}
Let $A_j(n)$ and $B_j(n)$ be defined by~(\ref{eq:AjDef}) and~(\ref{eq:BjDef}), respectively, and $\vfonc_{i,j}$ by~(\ref{eq:vfoncdef}). 
Then the following inequalities hold for all $j\geq0$ and all $n\geq1$ :
\begin{align}
\label{eq:Ajboundvfonc}
A_j(n)&\leq 2\pi\int_{-\pi}^{\pi}\left|\decim_j^{-1}\sum_{p=0}^{\decim_j-1}
[\vfonc_{1,j}\overline{\vfonc_{2,j}}](\decim_j^{-1}(\lambda+2\pi p))\right|^2\,\rmd\lambda\\
\label{eq:Bjboundvfonc}
B_j(n) &\leq \int_{-\pi}^{\pi} \left|\vfonc_{1,j}(\lambda)\right|^2\,\rmd\lambda
\int_{-\pi}^{\pi} \left(\decim_j^{-1}\sum_{p=0}^{\decim_j-1} \left|\vfonc_{2,j}(\decim_j^{-1}(\lambda+2\pi p))\right|
\right)^2\,\rmd\lambda \; .
\end{align}
Suppose moreover that $\decim_j\to\infty$ as $j\to\infty$, $\decim_j$ is an even integer for $j$ large enough and that Condition~\ref{item:FourierConditions} holds for some 
$\lambda_{i,\infty}\in[0,\pi)$ and $\Rset\to\Zset$ functions $\vfonc_{i,\infty}$, $i=1,2$. 
Then, as $(n,j)\to(\infty,\infty)$,
\begin{align}
\label{eq:Ajlimit}
A_j(n) \to 2\pi \; \limcons_{1,2} \;\int_{-\pi}^{\pi} 
\left|\sum_{p\in\Zset} \vfoncsym_{1,2}(\lambda+2\pi p)\right|^2\,\rmd\lambda\;,
\end{align}
where $\limcons_{1,2}$ and $\vfoncsym_{1,2}$ are defined in~(\ref{eq:LimConsDef}) and~(\ref{eq:vfoncsymDef}) respectively.
Moreover,
\begin{align}
\label{eq:Bjlimit}
\lim_{j\to\infty} \sup_{n\geq1} \left|B_j(n)\right| = 0 \; .
\end{align}
\end{lemma}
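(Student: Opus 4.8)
\textbf{Proof strategy for Lemma~\ref{lem:varlimitCLTlinear}.}

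The plan is to treat the four assertions in turn, since the two inequalities~(\ref{eq:Ajboundvfonc})--(\ref{eq:Bjboundvfonc}) are purely ``Parseval plus folding'' manipulations, whereas the two limits~(\ref{eq:Ajlimit})--(\ref{eq:Bjlimit}) require dominated convergence arguments fed by Lemmas~\ref{lem:finiteFolding},~\ref{lem:DomConvArgumenttt} and~\ref{lem:polynomDecrease}.

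First I would establish~(\ref{eq:Ajboundvfonc}). The inner sum $\sum_u \vseq_{1,j}(u)\vseq_{2,j}(\decim_j\tau+u)$ in~(\ref{eq:AjDef}) is a cross-correlation, hence equals the $(\decim_j\tau)$-th Fourier coefficient of $[\vfonc_{1,j}\overline{\vfonc_{2,j}}]$ up to a constant; write it as $(2\pi)^{1/2}$ times the Fourier coefficient of index $\decim_j\tau$ of the $(2\pi)$-periodic function $h_j=[\vfonc_{1,j}\overline{\vfonc_{2,j}}]$. Then $A_j(n)=2\pi\sum_{\tau}(1-|\tau|/n)_+\,|c_{\decim_j\tau}(h_j)|^2\le 2\pi\sum_{\tau\in\Zset}|c_{\decim_j\tau}(h_j)|^2$. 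The coefficients $c_{\decim_j\tau}(h_j)$, $\tau\in\Zset$, are precisely the Fourier coefficients of the folded function $\decim_j^{-1}(h_j)_{\decim_j}$ of Lemma~\ref{lem:finiteFolding} (subsampling in the coefficient domain corresponds to periodization/folding), so Parseval on that function yields exactly the right-hand side of~(\ref{eq:Ajboundvfonc}). The bound~(\ref{eq:Bjboundvfonc}) is even easier: factor $\vseq_{1,j}^2(u)$ out, bound $\sum_\tau(1-|\tau|/n)_+\vseq_{2,j}^2(\decim_j\tau+u)\le\sum_{\tau\in\Zset}\vseq_{2,j}^2(\decim_j\tau+u)$, interchange sums, recognise $\sum_u\vseq_{1,j}^2(u)=\int_{-\pi}^\pi|\vfonc_{1,j}|^2$ by Parseval, and recognise the remaining sum over $u$ and $\tau$ as the $\ell^2$ norm of a folded version of $|\vfonc_{2,j}|$ via Lemma~\ref{lem:finiteFolding}, applied this time to $g=|\vfonc_{2,j}|$ composed appropriately; the triangle inequality inside the fold gives the squared sum of absolute values displayed in~(\ref{eq:Bjboundvfonc}).

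Next, for~(\ref{eq:Bjlimit}) I would start from~(\ref{eq:Bjboundvfonc}). The first factor $\int_{-\pi}^\pi|\vfonc_{1,j}|^2$ is bounded uniformly in $j$ by Parseval and~(\ref{eq:varianceZlim}) (or directly by~(\ref{eq:unfiBoundvfonc}), since $\delta>1/2$). For the second factor, change variables $\lambda=\decim_j^{-1}\xi$ after folding: using~(\ref{eq:unfiBoundvfonc}) to bound $\decim_j^{-1/2}|\vfonc_{2,j}(\decim_j^{-1}\xi+\lambda_{2,j})|\le C(1+|\xi|)^{-\delta}$, together with Lemma~\ref{lem:DomConvArgumenttt} (specifically~(\ref{eq:supsupgj})) to control the folded square sum uniformly, shows the second factor is $O(\decim_j^{-1})$. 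Hence $\sup_n|B_j(n)|\to0$. The key technical point here is that folding a $(1+\decim_j||\lambda|-\lambda_{2,j}|)^{-\delta}$ bound and then squaring and integrating over a period produces, after the natural rescaling, a factor $\decim_j^{-1}$ rather than $O(1)$; this is exactly what Lemma~\ref{lem:DomConvArgumenttt} was designed to deliver.

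Finally, the main obstacle is~(\ref{eq:Ajlimit}), the convergence of $A_j(n)$ as $(n,j)\to(\infty,\infty)$. Here I would argue in two stages. Stage one: show that $A_j(n)$ is close, uniformly in $n$, to the $j$-dependent quantity $\tilde A_j\eqdef 2\pi\sum_{\tau\in\Zset}|c_{\decim_j\tau}(h_j)|^2$ appearing in~(\ref{eq:Ajboundvfonc}); the difference involves $\sum_\tau(|\tau|/n)|c_{\decim_j\tau}(h_j)|^2$ which one must show is negligible — this is where a uniform decay estimate on $\tau\mapsto|c_{\decim_j\tau}(h_j)|$ is needed, obtained by combining~(\ref{eq:unfiBoundvfonc}) (which bounds $h_j$ by a folded product of polynomially-decaying kernels) with Lemma~\ref{lem:polynomDecrease} to get $|c_{\decim_j\tau}(h_j)|\le C(1+\decim_j|\tau|\dots)^{-\delta}$-type control, so that $\sum_\tau|\tau||c_{\decim_j\tau}(h_j)|^2$ stays bounded and the $1/n$ factor kills it; alternatively one invokes Lemma~\ref{lem:empVarL2bound} with the folded function as $\bg$, whose $M_n$ converges to the $L^2$ norm, which is the cleanest route. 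Stage two: identify $\lim_{j}\tilde A_j$ with the right-hand side of~(\ref{eq:Ajlimit}). By Lemma~\ref{lem:finiteFolding}, $\tilde A_j=2\pi\int_{-\pi}^\pi|\decim_j^{-1}(h_j)_{\decim_j}(\lambda)|^2\,\rmd\lambda$; after the substitution $\lambda\mapsto$ rescaled variable around $\decim_j\lambda_{i,\infty}$, the folded sum $\decim_j^{-1}\sum_{p=0}^{\decim_j-1}h_j(\decim_j^{-1}(\lambda+2\pi p))$ converges pointwise to $\limcons_{1,2}\sum_{p\in\Zset}\vfoncsym_{1,2}(\lambda+2\pi p)$ — this is where the three cases of~(\ref{eq:LimConsDef}) enter: when $\lambda_{1,\infty}\ne\lambda_{2,\infty}$ the fold vanishes by the argument already used in~(\ref{eq:limvInJdifferentFreq}); when $\lambda_{1,\infty}=\lambda_{2,\infty}=0$ only the ``$p$ near $0$'' and the symmetric ``$p$ near $\decim_j$'' blocks survive, giving the factor involving both $[\vfonc_{1,\infty}\overline{\vfonc_{2,\infty}}](\xi)$ and its reflected counterpart via~(\ref{eq:LimitvfoncSym}), which combine into $\vfoncsym_{1,2}$ with constant $\limcons=1$ (one surviving copy because $0$ and $\decim_j\cdot 2\pi$ coincide modulo the fold); when $\lambda_{1,\infty}=\lambda_{2,\infty}>0$ there are two distinct surviving blocks, near $\decim_j\lambda_{i,\infty}$ and near $\decim_j(2\pi-\lambda_{i,\infty})$, giving $\limcons=2$. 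The domination needed to pass the limit under the integral is supplied uniformly by~(\ref{eq:unfiBoundvfonc}) and~(\ref{eq:supsupgj}) of Lemma~\ref{lem:DomConvArgumenttt}. Keeping the bookkeeping of which $p$-blocks survive, and checking that the phase functions $\vphase_j$ cancel in the modulus, is the delicate part; once it is done, dominated convergence finishes~(\ref{eq:Ajlimit}), and combining the two stages with the triangle inequality completes the proof.
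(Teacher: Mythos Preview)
Your overall strategy matches the paper's: the four assertions are handled by Parseval/folding for the bounds, and by dominated convergence plus Lemma~\ref{lem:empVarL2bound} for the limits. The bounds~(\ref{eq:Ajboundvfonc})--(\ref{eq:Bjboundvfonc}) and the limit~(\ref{eq:Bjlimit}) go exactly as you describe, with one caveat: your claimed rate $O(\decim_j^{-1})$ for the second factor in~(\ref{eq:Bjboundvfonc}) is only correct for $\delta>1$. For $1/2<\delta\le1$ the paper shows instead that the factor is at most $C\,\decim_j^{-1}\bigl(\sum_{q=0}^{\decim_j/2}(1+q)^{-\delta}\bigr)^2$, which still tends to~$0$. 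This needs a direct estimate on the sum $\sum_p(1+|\cdot|)^{-\delta}$ rather than an appeal to~(\ref{eq:supsupgj}), which controls a fold with exponent $-2\delta$, not~$-\delta$.

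The one genuine gap is in your treatment of~(\ref{eq:Ajlimit}). Your ``Stage one / Stage two'' decomposition compares $A_j(n)$ first to $\tilde A_j=\|g_j\|_2^2$ and then $\tilde A_j$ to its limit. But Stage one requires $M_n(g_j)\to\|g_j\|_2$ \emph{uniformly in $j$} as $n\to\infty$, and~(\ref{eq:NnLim}) in Lemma~\ref{lem:empVarL2bound} only gives this for a fixed $g$; your fallback of proving a uniform-in-$j$ decay of the coefficients $c_{\decim_j\tau}(h_j)$ in $\tau$ would work but is extra labour you have not carried out. The paper sidesteps this entirely by comparing $M_n(g_j)$ not to $\|g_j\|_2$ but to $M_n(g_\infty)$ at the \emph{same} $n$: the Lipschitz bound~(\ref{eq:NnLipschitz}) gives $|M_n(g_j)-M_n(g_\infty)|\le\|g_j-g_\infty\|_2$, which tends to $0$ in $j$ uniformly in $n$ once one has the $L^2$ convergence $g_j\to g_\infty$ (this is essentially your Stage two computation, carried out in the paper after the translation $g_j^{(\pi\decim_j)}(\lambda)=g_j(\lambda-\pi\decim_j)$ so that the surviving $p$-blocks sit at finite indices). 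Then~(\ref{eq:NnLim}) applied to the \emph{fixed} function $g_\infty$ handles the $n\to\infty$ limit. This decoupling of the $j$- and $n$-limits via the Lipschitz property is the clean way to take the joint limit, and it is the one idea your sketch is missing.
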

\begin{proof}
\noindent\textbf{Step~1.} 
Using properties of the convolution of square summable sequences, we have, for all $t\in\Zset$,  
$$
\sum_{u\in\Zset} \vseq_{1,j}(u)\vseq_{2,j}(t+u)  = 
\int_{-\pi}^\pi \vfonc_{1,j}(\lambda) \overline{\vfonc_{2,j}(\lambda)}\,\rme^{\rmi\,t\lambda} \, \rmd\lambda \;.
$$
For any $\tau\in\Zset$, applying Lemma~\ref{lem:finiteFolding} to the $(2\pi)$-periodic function 
$\lambda\mapsto\vfonc_{1,j}(\lambda) \overline{\vfonc_{2,j}(\lambda)}\,\rme^{\rmi\,\decim_j\tau\lambda}$, one gets
$$
\sum_{u\in\Zset} \vseq_{1,j}(u)\vseq_{2,j}(\decim_j\tau+u) = (2\pi)^{-1/2} \int_{-\pi}^\pi 
\left((2\pi)^{1/2}\decim_j^{-1}\sum_{p=0}^{\decim_j-1} \vfonc_{1,j}(\xi_{j,p}(\lambda))\overline{\vfonc_{2,j}(\xi_{j,p}(\lambda))}\right)
\,\rme^{\rmi\,\tau\lambda} \, \rmd\lambda \;, 
$$
where 
\begin{equation}
  \label{eq:XiJPdef}
\xi_{j,p}(\lambda)\eqdef\decim_j^{-1}(\lambda+2\pi p) \;.
\end{equation}
Using the notation of Lemma~\ref{lem:empVarL2bound}, we can express $A_j(n)$ defined in~(\ref{eq:AjDef}), as
\begin{equation}
  \label{eq:AM}
 A_j(n)=\left(M_n(g_j)\right)^2
\end{equation}
where
\begin{equation}
  \label{eq:AjNewDef}
g_j(\lambda)=(2\pi)^{1/2}\decim_j^{-1}\sum_{p=0}^{\decim_j-1} \vfonc_{1,j}(\xi_{j,p}(\lambda))\overline{\vfonc_{2,j}(\xi_{j,p}(\lambda))}\;. 
\end{equation}
The bound~(\ref{eq:NnLipschitz}) in Lemma~\ref{lem:empVarL2bound} with $\bg_1=g_j$ and $\bg_2=0$ thus
gives~(\ref{eq:Ajboundvfonc}).

\noindent\textbf{Step~2.} 
Let us now show~(\ref{eq:Bjboundvfonc}). Since
$$
\vseq_{2,j}(\decim_j\tau+u) = (2\pi)^{-1/2}\;\int_{-\pi}^{\pi} \vfonc_{2,j}(\lambda)\;\rme^{\rmi\,\lambda(\decim_j\tau+u)}\rmd\lambda \; ,
$$
we can apply Lemma~\ref{lem:finiteFolding} to  the $(2\pi)$-periodic function 
$\lambda\mapsto\vfonc_{2,j}(\lambda)\;\rme^{\rmi\,\lambda(\decim_j\tau+u)}$ for all $u$ and $\tau$
in $\Zset$ and get
$$
\vseq_{2,j}(\decim_j\tau+u) = (2\pi)^{-1/2} \int_{-\pi}^{\pi} 
\left(\decim_j^{-1}\sum_{p=0}^{\decim_j-1} \vfonc_{2,j}(\xi_{j,p}(\lambda))\rme^{\rmi\,u\xi_{j,p}(\lambda)} \right)\,\rme^{\rmi\,\tau\lambda} \,
\rmd\lambda \; .
$$
Using the Parseval formula, we get, for all $u\in\Zset$,
\begin{align*}
\sum_{\tau=-n+1}^{n-1} (1-|\tau|/n) \, \vseq_{2,j}^2(\decim_j\tau+u)
& \leq \sum_{\tau\in\Zset}\vseq_{2,j}^2(\decim_j\tau+u) \\
& = \int_{-\pi}^{\pi} \left|\decim_j^{-1}\sum_{p=0}^{\decim_j-1} \vfonc_{2,j}(\xi_{j,p}(\lambda))\rme^{\rmi\,u\xi_{j,p}(\lambda)}
\right|^2\,\rmd\lambda \\
& \leq \int_{-\pi}^{\pi} \left(\decim_j^{-1}\sum_{p=0}^{\decim_j-1} \left|\vfonc_{2,j}(\xi_{j,p}(\lambda))\right|
\right)^2\,\rmd\lambda \; .
\end{align*}
Observing that the resulting bound is independent of $u\in\Zset$ and using the Parseval formula
$\sum_{u\in\Zset}\vseq_{1,j}^2(u) = \int_{-\pi}^{\pi} \left|\vfonc_{1,j}(\lambda)\right|^2\,\rmd\lambda$,
we obtain the bound~(\ref{eq:Bjboundvfonc}) for $B_j(n)$ defined in~(\ref{eq:BjDef}). 

\noindent\textbf{Step~3.} 
We now establish the limit~(\ref{eq:Ajlimit}) successively in the cases 
$\lambda_{1,\infty}\neq\lambda_{2,\infty}$ and $\lambda_{1,\infty}=\lambda_{2,\infty}$.
The $(2\pi)$-periodicity of $\vfonc_{1,j}\overline{\vfonc_{2,j}}$ and
Lemma~\ref{lem:finiteFolding} entail that $g_j$ (defined in~(\ref{eq:AjNewDef})) is $(2\pi)$-periodic.  By definition of
$M_n$ in 
Lemma~\ref{lem:empVarL2bound}, it follows that, for any $j\geq0$ and any $\tau\in\Rset$,
\begin{equation}
  \label{eq:AMbis}
 M_n(g_j)=M_n(g_j^{(\tau)})\quad \text{with}\quad
g_j^{(\tau)}(\lambda)=g_j(\lambda-\tau),\quad\lambda\in\Rset\;,
\end{equation}
since the modulus of the Fourier coefficients of $g_j$ and $g_j^{(\tau)}$ are equal.
In the following we will take $\tau=\pi\decim_j$. 
Observe that, for all $p\in\{0,\dots,\decim_j-1\}$,
$\lambda\in(0,2\pi)$ and $j\geq0$, 
\begin{equation}
  \label{eq:XijpIn-pipi}
  \xi_{j,p}(\lambda-\pi\decim_j)\in(-\pi,\pi) \; .
\end{equation}
Consider the case where
$\lambda_{1,\infty}\neq\lambda_{2,\infty}$, which, by~(\ref{eq:Limitfreq}), implies 
\begin{equation}
  \label{eq:limitfreqcasedifferent}
  \decim_j|\lambda_{1,j}-\lambda_{2,j}|\to\infty\quad\text{as $j\to\infty$}\;.
\end{equation}
Using~(\ref{eq:XijpIn-pipi}),~(\ref{eq:AjNewDef}),~(\ref{eq:AMbis}),~(\ref{eq:XijpIn-pipi}) 
and~(\ref{eq:unfiBoundvfoncSym}), we have, for some constant $C>0$, for all $j\geq0$,
\begin{align*}
\sup_{\lambda\in(0,2\pi)}\left|g_j^{(\pi\decim_j)}(\lambda)\right|&
=\sup_{\lambda\in(0,2\pi)}\left|g_j(\lambda-\pi\decim_j)\right|\\
&\leq C \sum_{p=0}^{\decim_j-1}
\prod_{i=0}^1(1+\decim_j\left||\xi_{j,p}(\lambda-\pi\decim_j)|-\lambda_{i,j}\right|)^{-\delta}\\
&\leq C\sup_{t,t'\in\Rset}\sum_{p\in\Zset}
(1+\left||t+2\pi p|-t'\right|)^{-\delta}\\
&\hspace{2cm}\times(1+\left||t+2\pi p|-t'-\decim_j|\lambda_{1,j}-\lambda_{2,j}|\right|)^{-\delta}\\
&\to 0 \quad\text{as $j\to\infty$},
\end{align*}
by~(\ref{eq:limitfreqcasedifferent}) and~(\ref{eq:supsupgjLim}) in
Lemma~\ref{lem:DomConvArgumenttt}. Applying~(\ref{eq:AM}),~(\ref{eq:AMbis}) and
the bound~(\ref{eq:NnLipschitz}) in Lemma~\ref{lem:empVarL2bound} with $\bg_1=g_j$ and $\bg_2=0$
yields
$$
A_j(n)=(M_n(g_j^{(\pi\decim_j)}))^2\leq
\int_{-\pi}^\pi|g_j^{(\pi\decim_j)}|^2\;\rmd\lambda\;.
$$
The last two displays and the $(2\pi)$-periodicity of $g_j$ imply
$A_j(n)\to0$ as $j\to\infty$. This proves~(\ref{eq:Ajlimit}) since by~(\ref{eq:LimConsDef}), $C_{1,2}=0$ when $\lambda_{1,\infty}\neq\lambda_{2,\infty}$.

We now consider the case $\lambda_{1,\infty}=\lambda_{2,\infty}$. By Condition~(\ref{eq:CondFreqEqualOrDifferent}), we have
$\lambda_{1,j}=\lambda_{2,j}$ for all $j$ large enough. 
Let $p_j= \decim_j\lambda_{1,j}/(2\pi)+\decim_j/2$ so that 
\begin{equation}
  \label{eq:lambda01pj}
\lambda_{1,j}=\lambda_{2,j}=2\pi\decim_j^{-1} p_j-\pi\;. 
\end{equation}
By Condition~(\ref{eq:integerCondition}) and since $\decim_j$ is even for $j$ large enough by assumption, we get that $p_j$ is an integer for $j$ large enough. 
Writing
$$
\sum_{p=0}^{\decim_j-1}=
\sum_{p=0}^{[\decim_j/2]-1}+\sum_{p=[\decim_j/2]}^{\decim_j-1}
=\sum_{q=-(\decim_j-p_j)}^{[\decim_j/2]-(\decim_j-p_j)-1}+\sum_{r=[\decim_j/2]-p_j}^{\decim_j-p_j-1}\;,
$$
where $q=p-(\decim_j-p_j)$ and $r=p-p_j$ and observe that, with these definitions,~(\ref{eq:XiJPdef})
and~(\ref{eq:lambda01pj}), we have $\xi_{j,q}(\lambda)=\decim_j^{-1}(\lambda+2\pi q)=
\decim_j^{-1}[(\lambda-\pi\decim_j)+2\pi p]+2\pi\decim_j^{-1} p_j-\pi
=\xi_{j,p}(\lambda-\pi\decim_j)+\lambda_{1,j}$, and, similarly, 
$\xi_{j,q}(\lambda)=\xi_{j,p}(\lambda-\pi\decim_j)-\lambda_{1,j}$, so that 
$g_j^{(\pi\decim_j)}(\lambda)=g_j(\lambda-\pi\decim_j)$ defined in~(\ref{eq:AMbis}) and~(\ref{eq:AjNewDef}), can be expressed as
\begin{multline}\label{eq:gjinTwoParts}
g_j^{(\pi\decim_j)}(\lambda)=(2\pi)^{1/2}\left[
\sum_{q=-(\decim_j-p_j)}^{[\decim_j/2]-(\decim_j-p_j)-1}
\frac{\vfonc_{1,j}\overline{\vfonc_{2,j}}}{\decim_j}(\xi_{j,q}(\lambda)-\lambda_{1,j})\right.\\
\left.+\sum_{r=[\decim_j/2]-p_j}^{\decim_j-p_j-1}
\frac{\vfonc_{1,j}\overline{\vfonc_{2,j}}}{\decim_j}(\xi_{j,r}(\lambda)+\lambda_{1,j})
\right]\;. 
\end{multline}
Since $\lim_{j\to\infty}\gamma_j=\infty$ and, by Condition~(\ref{eq:Limitfreq}),
$\lim_{j\to\infty}\lambda_{i,j}=\lambda_{i,\infty}\in[0,\pi)$, we have $\lim_{j\to\infty} \decim_j^{-1} p_j \in[1/2,1)$ and thus
\begin{equation}
  \label{eq:limitpjSurDecimjSansCond}
  -(\decim_j-p_j)\to-\infty\quad\text{and}\quad\decim_j-p_j-1\to\infty\;,
\end{equation}
namely, that in~(\ref{eq:gjinTwoParts}), the upper limit of the first sum tends to $\infty$ and the bottom limit of the second
sum  tends to $-\infty$. We now consider the remaining limits.
If $\lambda_{1,\infty}=\lambda_{2,\infty}>0$, then
$\lim_{j\to\infty} \decim_j^{-1} p_j$ falls in the open interval $(1/2,1)$ and thus 
\begin{equation}
  \label{eq:limitpjSurDecimj}
[\decim_j/2]-(\decim_j-p_j)-1\to\infty\quad\text{and}\quad[\decim_j/2]-p_j\to-\infty,\;.
\end{equation}
If $\lambda_{1,\infty}=\lambda_{2,\infty}=0$, using~(\ref{eq:CondFreqEqual0orDifferent}),~(\ref{eq:lambda01pj}) implies
$p_j=\decim_j/2$ and thus, for $j$ large enough so that $p_j$ is integer--valued and $\decim_j$ even,
\begin{equation}
  \label{eq:limitpjSurDecimjFreqZero}
[\decim_j/2]-(\decim_j-p_j)-1=-1\quad\text{and}\quad[\decim_j/2]-p_j=0\;.
\end{equation}
In view of~(\ref{eq:XiJPdef}), Conditions~(\ref{eq:CondFreqEqualOrDifferent}),~(\ref{eq:unfiBoundvfonc}) and~(\ref{eq:Limitvfonc}) 
(which imply~(\ref{eq:LimitvfoncAltern}),~(\ref{eq:unfiBoundvfoncSym})
and~(\ref{eq:LimitvfoncSym})),~(\ref{eq:limitpjSurDecimjSansCond}),~(\ref{eq:limitpjSurDecimj}),~(\ref{eq:limitpjSurDecimjFreqZero})
and dominated convergence yield, for all $\lambda\in(0,2\pi)$,    
\begin{equation}\label{eq:limiteCov2simple}
g_j^{(\pi\decim_j)}(\lambda)\to g_\infty(\lambda)\quad\text{as $j\to\infty$}\;,
\end{equation}
where
$$
g_\infty(\lambda)=(2\pi)^{1/2}
\left[\sum_{q\in\Zset}[\overline{\vfonc_{1,\infty}}\vfonc_{2,\infty}](-\lambda-2\pi q)
+\sum_{r\in\Zset}
[\vfonc_{1,\infty}\overline{\vfonc_{2,\infty}}](\lambda+2\pi r)\right]
$$
if $\lambda_{1,\infty}=\lambda_{2,\infty}>0$, and
$$
g_\infty(\lambda)=(2\pi)^{1/2}
\left[\sum_{q=-\infty}^{-1}[\overline{\vfonc_{1,\infty}}\vfonc_{2,\infty}](-\lambda-2\pi q)
+\sum_{r=0}^\infty
[\vfonc_{1,\infty}\overline{\vfonc_{2,\infty}}](\lambda+2\pi r)\right]
$$
if $\lambda_{1,\infty}=\lambda_{2,\infty}=0$.
By definition of $\vfoncsym$ in~(\ref{eq:vfoncsymDef}) and using~(\ref{eq:symetryFreqZero}),
 one has
$[\overline{\vfonc_{1,\infty}}\vfonc_{2,\infty}](-\lambda)=
[\vfonc_{1,\infty}\overline{\vfonc_{2,\infty}}](\lambda)=\vfoncsym_{1,2}(\lambda)$,  
the two previous displays read 
\begin{equation}
\label{eq:gByvfonc}
g_\infty(\lambda)= (2\pi)^{1/2}\; \limcons_{1,2}\;
\sum_{p\in\Zset} \vfoncsym_{1,2}(\lambda+2\pi p) \;.
\end{equation}
Conditions~(\ref{eq:unfiBoundvfonc}) and~(\ref{eq:Limitvfonc}) imply~(\ref{eq:LimitvfoncBound}), thus that $g_\infty(\lambda)$ is bounded, hence square integrable on $\lambda\in(-\pi,\pi)$. 
Moreover, applying the same dominated argument as above, one has
\begin{align}\label{eq:limiteCov2}
\lim_{j\to0}
\int_{-\pi}^{\pi}
\left|g_j^{(\pi\decim_j)}(\lambda)-g_\infty(\lambda)\right|^2
\,\rmd\lambda = 0 \; .
\end{align}
One gets by~(\ref{eq:AMbis}),~(\ref{eq:NnLipschitz}) and~(\ref{eq:limiteCov2})
\begin{equation}
  \label{eq:limMinJ}
\left|M_n(g_j)-M_n(g_\infty)\right|^2
\leq \int_{-\pi}^{\pi}
\left|g_j(\lambda)-g_\infty(\lambda)\right|^2
\,\rmd\lambda \to0\quad\text{as $j\to\infty$.}
\end{equation}
By applying the limit~(\ref{eq:NnLim}) with $\bg=g_\infty$, one gets 
\begin{equation}
  \label{eq:limMinN}
  M_n(g_\infty)^2\to\int_{-\pi}^\pi|g_\infty(\lambda)|^2\;\rmd\lambda      
\end{equation}
as $n\to\infty$. 
Hence, setting $M_n(g_j)= \left(M_n(g_j)-M_n(g_\infty)\right)+M_n(g_\infty)$,
the limit~(\ref{eq:Ajlimit}) follows from~(\ref{eq:AM}),~(\ref{eq:gByvfonc}),~(\ref{eq:limMinJ}) and~(\ref{eq:limMinN}).     

\noindent\textbf{Step~4.} 
We now establish the limit~(\ref{eq:Bjlimit}). 
By~Condition~(\ref{eq:unfiBoundvfonc}), we have
\begin{equation}
  \label{eq:varianceZBound}
\sup_{j\geq0}\int_{-\pi}^{\pi} \left|\vfonc_{1,j}(\lambda)\right|^2\,\rmd\lambda < \infty \; .  
\end{equation}
Using similar arguments as above and Condition~(\ref{eq:unfiBoundvfonc}), we have
\begin{multline}\label{eq:OtherBjPartBound} 
\int_{-\pi}^{\pi} \left(\decim_j^{-1}\sum_{p=0}^{\decim_j-1} 
\left|\vfonc_{2,j}(\xi_{j,p}(\lambda))\right|\right)^2\,\rmd\lambda 
= \decim_j^{-1}\int_{0}^{2\pi} 
\left(\sum_{p=0}^{\decim_j-1}\decim_j^{-1/2}\left|\vfonc_{2,j}(\xi_{j,p}(\lambda-\pi\decim_j))\right|
\right)^2\,\rmd\lambda \\
\leq C\; \decim_j^{-1}
\int_{0}^{2\pi} \left(\sum_{p=0}^{\decim_j-1}
(1+\left||\lambda+2\pi(p-\decim_j/2)|-\decim_j\lambda_{2,j}\right|)^{-\delta}\right)^2\,\rmd\lambda
\end{multline}
Using that $||a+b|-c|\geq||b|-c|-|a|$ and  $\lambda\in[0,2\pi]$, we have
$$
\left||\lambda+2\pi(p-\decim_j/2)|-\decim_j\lambda_{2,j}\right|
\geq \left|2\pi|p-\decim_j/2|-\decim_j\lambda_{2,j}\right|- 2\pi
$$
Take $j$ large enough so that $\decim_j$ is even.
Since $\lambda_{2,j}\in[0,\pi)$, as $p\in\{0,\dots,\decim_j-1\}$, 
$2\pi|p-\decim_j/2|-\decim_j\lambda_{2,j}$ is a sequence of numbers with lag $2\pi$ and
belonging to $[-\decim_j\pi,\decim_j\pi]$
and can thus be written as a sequence $2\pi q+ c$, where $q$ belongs to $\{-\decim_j/2,\dots,\decim_j/2\}$
and $c$ to $[-\pi,\pi]$ so that
$$
\left|2\pi|p-\decim_j/2|-\decim_j\lambda_{2,j}\right|- 2\pi \geq 2\pi |q| - 3\pi \; .
$$
From the last two displays, we have
$$
\left(5\pi+\left||\lambda+2\pi(p-\decim_j/2)|-\decim_j\lambda_{2,j}\right|\right)^{-\delta}
\leq (2\pi)^{-\delta}(1+|q|)^{-\delta}\;,
$$
with $q$ describing $\{-\decim_j/2,\dots,\decim_j/2\}$ as $p$ describes $\{0,\dots,\decim_j-1\}$. 
Inserting this bound in~(\ref{eq:OtherBjPartBound}), we get
$$
\int_{-\pi}^{\pi} \left(\decim_j^{-1}\sum_{p=0}^{\decim_j-1} 
\left|\vfonc_{2,j}(\xi_{j,p}(\lambda))\right|\right)^2\,\rmd\lambda 
 \leq C\; \decim_j^{-1} \left(\sum_{q=0}^{\decim_j/2}(1+q)^{-\delta}\right)^2
$$
for some constant $C$ not depending on $j\geq0$. Since the last right-hand side of the previous display tends to 0 as
$j\to\infty$ for any $\delta>1/2$, with~(\ref{eq:varianceZBound}) and~(\ref{eq:Bjboundvfonc}), we obtain~(\ref{eq:Bjlimit}).
\end{proof}

\begin{remark}
  The factor $4\pi=2\times2\pi$ in~(\ref{eq:GammaDef}) is due to the factor 2 in the right-hand side
  of~(\ref{eq:AjBjDecomp}) and the presence of $2\pi$ in the  right-hand side
  of~(\ref{eq:Ajlimit}). 
\end{remark}

\begin{corollary}\label{cor:Limcov}
Let $\{Z_{i,j,k},\;i=1,2, j\geq0, k\in\Zset\}$ be an array of $2$--dimensional decimated linear processes as defined by~(\ref{eq:Zdef}).
Assume~\ref{it:A1}, that $\decim_j$ is even for $j$ large enough and that
Condition~\ref{item:FourierConditions} holds for some $\lambda_{i,\infty}\in[0,\pi)$ and $\Rset\to\Zset$ functions $\vfonc_{i,\infty}$,
$i=1,2$. Then, for all $k,k'\in\Zset$, as $j\to\infty$,
\begin{equation}
  \label{eq:LimcovSquare-neq1}
\PCov\left(Z_{1,j,k}^2, Z_{2,j,k'}^2\right)\to
2\,\limcons_{1,2}^2\,\left(\int_{-\infty}^{\infty}\vfoncsym_{1,2}(\lambda)\rme^{\rmi\lambda(k'-k)}\,\rmd\lambda\right)^2\;,
\end{equation}
where $\limcons_{1,2}$ and $\vfoncsym_{1,2}$ are defined in~(\ref{eq:LimConsDef}) and~(\ref{eq:vfoncsymDef}), 
and, as $(n,j)\to(\infty,\infty)$, 
\begin{equation}
  \label{eq:Limcov}
  \frac1n\PCov\left(\sum_{k=0}^{n-1} Z_{1,j,k}^2, \sum_{k=0}^{n-1} Z_{2,j,k}^2\right)\to
4\pi\,\limcons_{1,2}\,\int_{-\pi}^{\pi} 
\left|\sum_{p\in\Zset} \vfoncsym_{1,2}(\lambda+2\pi p)\right|^2\,\rmd\lambda\;.
\end{equation}
\end{corollary}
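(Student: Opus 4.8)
The plan is to read both limits off the decomposition of Lemma~\ref{lem:DecompAjBj} combined with Proposition~\ref{prop:Limcov} and the limits established in Lemma~\ref{lem:varlimitCLTlinear}; no genuinely new estimate should be needed. For~(\ref{eq:Limcov}) I would simply use~(\ref{eq:AjBjDecomp}),
\[
\frac1n\PCov\left(\sum_{k=0}^{n-1} Z_{1,j,k}^2,\sum_{k=0}^{n-1} Z_{2,j,k}^2\right)=2A_j(n)+\kappa_4 B_j(n)\;,
\]
and pass to the limit as $(n,j)\to(\infty,\infty)$. The term $2A_j(n)$ converges to $4\pi\,\limcons_{1,2}\int_{-\pi}^{\pi}\big|\sum_{p\in\Zset}\vfoncsym_{1,2}(\lambda+2\pi p)\big|^2\,\rmd\lambda$ by~(\ref{eq:Ajlimit}); the term $\kappa_4 B_j(n)$ tends to $0$ because $\sup_{n\geq1}|B_j(n)|\to0$ as $j\to\infty$ by~(\ref{eq:Bjlimit}), and since this control is uniform in $n$ it kills the $B_j$ contribution along any joint path $(n,j)\to(\infty,\infty)$. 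Adding the two contributions gives~(\ref{eq:Limcov}).

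For~(\ref{eq:LimcovSquare-neq1}) I would start from the product-of-squares identity already used inside the proof of Lemma~\ref{lem:DecompAjBj}, namely (for centered variables)
\[
\PCov\left(Z_{1,j,k}^2,Z_{2,j,k'}^2\right)=2\,\PCov^2\left(Z_{1,j,k},Z_{2,j,k'}\right)+\kappa_4\sum_{t\in\Zset}\vseq_{1,j}^2(\decim_jk-t)\,\vseq_{2,j}^2(\decim_jk'-t)\;.
\]
Squaring the conclusion~(\ref{eq:Limcov-neq1}) of Proposition~\ref{prop:Limcov} shows that the first term tends to $2\,\limcons_{1,2}^2\big(\int_{-\infty}^{\infty}\vfoncsym_{1,2}(\lambda)\rme^{\rmi\lambda(k'-k)}\,\rmd\lambda\big)^2$, the announced limit. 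For the fourth-cumulant term I would fix $k,k'$, substitute $u=\decim_jk-t$ to rewrite it as $\kappa_4\sum_{u\in\Zset}\vseq_{1,j}^2(u)\,\vseq_{2,j}^2(\decim_j(k'-k)+u)$, and recognise this, up to the factor $(1-|k'-k|/n)$, as precisely the single-lag $\tau=k'-k$ contribution to $B_j(n)$ in~(\ref{eq:BjDef}). Because every summand defining $B_j(n)$ is non-negative, for $n>2|k'-k|$ that contribution is at most $2B_j(n)\le 2\sup_{n\geq1}|B_j(n)|$, which tends to $0$ by~(\ref{eq:Bjlimit}). Letting $j\to\infty$ then yields~(\ref{eq:LimcovSquare-neq1}).

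The argument is essentially bookkeeping rather than analysis, so there is no real obstacle; the one point I would take care over is that the vanishing of the $B_j$ contributions must be invoked in the \emph{uniform-in-$n$} form~(\ref{eq:Bjlimit}), both to dispatch $\kappa_4 B_j(n)$ along the joint limit in~(\ref{eq:Limcov}) and, via the non-negativity of its summands, to dominate the single-lag fourth-cumulant term arising in~(\ref{eq:LimcovSquare-neq1}).
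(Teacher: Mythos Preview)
Your proposal is correct and matches the paper's own proof almost exactly: both parts are read off Lemma~\ref{lem:DecompAjBj} together with~(\ref{eq:Ajlimit}) and~(\ref{eq:Bjlimit}), and the covariance-squared term in~(\ref{eq:LimcovSquare-neq1}) is handled via Proposition~\ref{prop:Limcov}. The only cosmetic difference is that the paper disposes of the fourth-cumulant term by shifting the kernels so as to land on $B_j(1)$ and then quoting~(\ref{eq:Bjlimit}), whereas you dominate it by a single lag of $B_j(n)$; both routes use nothing beyond the non-negativity of the summands and the uniform-in-$n$ vanishing~(\ref{eq:Bjlimit}).
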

\begin{proof}
Setting $n=1$ in~(\ref{eq:AjBjDecomp}) and~(\ref{eq:AjDef}) and replacing $\vseq_{1,j}(-t)$ by $\vseq_{1,j}(\decim_jk-t)$ and $\vseq_{2,j}(-t)$ by $\vseq_{2,j}(\decim_jk'-t)$
so that $Z_{1,j,0}$ is replaced by $Z_{1,j,k}$ and $Z_{2,j,0}$  by $Z_{2,j,k'}$, we get 
\begin{align*}
\PCov\left(Z_{1,j,k}^2,Z_{2,j,k'}^2\right)
&=2A_j(1)+\kappa_4B_j(1)\\
&=2\left(\sum_{t\in\Zset} \vseq_{1,j}(\decim_jk-t)\vseq_{2,j}(\decim_jk'-t)\right)^2 +\kappa_4B_j(1)\;,
\end{align*}
and thus~(\ref{eq:LimcovSquare-neq1}) follows from~(\ref{eq:Bjlimit}),~(\ref{eq:covZ0Z1sum}) and~(\ref{eq:Limcov-neq1}).

Relation~(\ref{eq:Limcov}) is obtained by applying Lemmas~\ref{lem:DecompAjBj} and~\ref{lem:varlimitCLTlinear}. 
\end{proof}
\section{Proof of Theorem~\ref{thm:CLTlinear}}
\label{sec:proof-theor-clt}



We first establish two Central Limit Theorems which will be used in the proof of Theorem~\ref{thm:CLTlinear}.
The first involves a sequence of linear filters of the sequence $\{\xi_t,\,t\in\Zset\}$. 

\begin{proposition}\label{prop:CLTlinearForms} 
Define, for all $i=1,\dots,\dimV$, $j\geq0$,  $k\in\Zset$,
\begin{equation} 
\label{eq:ZdefNotime}
Z_{i,j} = \sum_{t\in\Zset} \vseq_{i,j}(t) \, \xi_t  \;,
\end{equation}
where for all $i=1,\dots,\dimV$ and $j\geq0$,
$\{\vseq_{i,j}(t),\;t\in\Zset\}$ is  real-valued and satisfies $\sum_{t\in\Zset} \vseq_{i,j}^2(t)<\infty$ and
$\{\xi_t,\;t\in\Zset\}$ satisfies~\ref{it:A1-2moments}. Assume that
\begin{align}
\label{eq:LimitvSecSup}
&\lim_{j\to\infty}\sup_{t\in\Zset} |\vseq_{i,j}(t)| = 0 \quad\text{for all}\quad i=1,\dots,\dimV\;, \\
\label{eq:LimitvSecCov}
&\lim_{j\to\infty}\sum_{t\in\Zset} \left(\vseq_{i,j}(t)\vseq_{i',j}(t)\right) = \Sigma_{i,i'} \quad\text{for all}\quad
i,i'=1,\dots,\dimV\;,
\end{align}
where $\Sigma$ is a $\dimV\times\dimV$ given matrix. Then, as $j\to\infty$,
\begin{equation}
\label{eq:CLT_Z_alone}
\left[
\begin{array}{c}
Z_{1,j} \\
\vdots\\
Z_{\dimV,j}
\end{array}
\right]\cl\calN(0,\Sigma) \; .
\end{equation}
\end{proposition}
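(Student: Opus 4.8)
The plan is to reduce the $\dimV$-dimensional statement to a one-dimensional central limit theorem by the Cram\'er--Wold device, and then to prove the latter by the characteristic-function argument underlying the Lindeberg--Feller theorem; Condition~(\ref{eq:LimitvSecSup}) is precisely the uniform asymptotic negligibility of the summands that makes this work.

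\emph{Reduction.} Fix $a=(a_1,\dots,a_{\dimV})\in\Rset^{\dimV}$ and set $c_j(t)=\sum_{i=1}^{\dimV}a_i\vseq_{i,j}(t)$, so that $\sum_{i=1}^{\dimV}a_iZ_{i,j}=\sum_{t\in\Zset}c_j(t)\xi_t=:S_j$, which is a well-defined element of $L^2$ because $\{\xi_t\}$ is i.i.d.\ with unit variance and $c_j\in\ell^2(\Zset)$. By~(\ref{eq:LimitvSecCov}), $\sum_{t\in\Zset}c_j^2(t)\to s^2\eqdef\sum_{i,i'=1}^{\dimV}a_ia_{i'}\Sigma_{i,i'}$ as $j\to\infty$, and by~(\ref{eq:LimitvSecSup}), $b_j\eqdef\sup_{t\in\Zset}|c_j(t)|\le\sum_{i=1}^{\dimV}|a_i|\sup_{t}|\vseq_{i,j}(t)|\to0$. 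It therefore suffices to prove that, for any scalar sequences $c_j\in\ell^2(\Zset)$ with $\sum_tc_j^2(t)\to s^2$ and $\sup_t|c_j(t)|\to0$, one has $S_j\cl\calN(0,s^2)$; the proposition then follows from the Cram\'er--Wold theorem.

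\emph{One-dimensional limit.} The characteristic function of $S_j$ is the (convergent) infinite product $\PE[\rme^{\rmi uS_j}]=\prod_{t\in\Zset}\phi(uc_j(t))$, where $\phi(v)=\PE[\rme^{\rmi v\xi_0}]$; this follows by $L^2$-approximation of $S_j$ by its finite partial sums. Using $\PE[\xi_0]=0$, $\PE[\xi_0^2]=1$ and the elementary bound $|\rme^{\rmi x}-1-\rmi x+x^2/2|\le\min(|x|^3/6,|x|^2)$, one gets, after splitting each $\PE[\min(|uc_j(t)\xi_0|^3/6,|uc_j(t)\xi_0|^2)]$ according to whether $|uc_j(t)\xi_0|\le\ep$ and using $|c_j(t)|\le b_j$, the key estimate
\[
\sum_{t\in\Zset}\Bigl|\phi(uc_j(t))-1+\tfrac{u^2}{2}c_j^2(t)\Bigr|
\le\frac{\ep u^2}{6}\sum_{t}c_j^2(t)+u^2\Bigl(\sum_{t}c_j^2(t)\Bigr)\PE\bigl[\xi_0^2\1(|\xi_0|>\ep/(|u|b_j))\bigr]\,,
\]
valid for every $\ep>0$ and $u\neq0$. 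Since $\sum_tc_j^2(t)$ is bounded, $b_j\to0$, and $\PE[\xi_0^2]<\infty$ by~\ref{it:A1-2moments}, the right-hand side tends to $\ep u^2s^2/6$; letting $\ep\downarrow0$ shows the left-hand side tends to $0$. Writing $z_{j,t}=\phi(uc_j(t))-1$, this estimate together with $\sum_tc_j^2(t)\to s^2$ yields $\sum_{t}z_{j,t}\to-s^2u^2/2$ and $\sup_j\sum_t|z_{j,t}|<\infty$, while $\max_t|z_{j,t}|\to0$ because $b_j\to0$. The standard comparison of the infinite product $\prod_t(1+z_{j,t})$ with $\exp(\sum_tz_{j,t})$ then gives $\PE[\rme^{\rmi uS_j}]\to\rme^{-s^2u^2/2}$, and $S_j\cl\calN(0,s^2)$ follows from L\'evy's continuity theorem.

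\emph{Main obstacle.} The only delicate point is that the sums run over the infinite index set $\Zset$, not over a finite triangular array, so the Lindeberg--Feller theorem cannot be quoted verbatim; moreover one cannot innocently truncate $c_j$ to $\{|t|\le M\}$, since the $\ell^2$-mass of $c_j$ may drift off to infinity as $j\to\infty$ while still obeying $\sup_t|c_j(t)|\to0$ (e.g.\ $c_j^2(t)=j^{-1}\1(j<t\le2j)$). Handling the convergent infinite product directly, as above, avoids this difficulty. Note finally that only the second-moment assumption~\ref{it:A1-2moments} is needed here, not~\ref{it:A1}.
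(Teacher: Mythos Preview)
Your proof is correct. The Cram\'er--Wold reduction is exactly as in the paper, but your treatment of the one-dimensional case differs genuinely from the paper's. The paper does \emph{not} work with the infinite product of characteristic functions; instead it chooses, for each $j$, a truncation level $m_j$ so that $\sum_{|t|\geq m_j}c_j^2(t)\leq 2^{-j}$, applies the Lindeberg--Feller theorem to the finite triangular array $\{c_j(t)\xi_t:\,|t|\leq m_j,\ j\geq0\}$, and (implicitly) uses that the remainder has variance $\leq 2^{-j}\to0$ and hence vanishes in probability. Thus your ``main obstacle'' paragraph is somewhat overstated: a \emph{fixed} truncation $\{|t|\leq M\}$ would indeed fail for the reason you give, but a $j$-dependent truncation works perfectly well and lets one quote Lindeberg--Feller as a black box. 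Your direct characteristic-function argument is more self-contained (it essentially reproves the relevant case of Lindeberg--Feller) and avoids the extra Slutsky step, at the cost of being a bit longer; the paper's route is shorter but relies on the reader filling in that the tail is negligible.
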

\begin{remark}
A study on the weak convergence of such sequence without assuming Assumption~\ref{it:A1-2moments} can be found
in~\cite{lang:soulier:1998}.
\end{remark}

\begin{proof}
This is a standard application of the Lindeberg-Feller Central Limit Theorem. 
Using the Cramér-Wold device for vectorial Central Limit Theorem and since for any column vector $\zeta \in\Rset^{\dimV}$, the linear
combination $\sum_{i=1}^\dimV\zeta_iZ_{i,j}$ can 
be written as in~(\ref{eq:ZdefNotime}) with $\vseq_{i,j}(t)$ replaced by
$\vseq_{j}(t;\zeta)=\sum_{i=1}^\dimV\zeta_i\vseq_{i,j}(t)$, which satisfies
\begin{align*}
&\lim_{j\to\infty}\sup_{t\in\Zset} |\vseq_{j}(t;\zeta)| = 0 \quad\text{for all}\quad i=1,\dots,\dimV\;, \\
&\lim_{j\to\infty}\sum_{t\in\Zset} \left(\vseq_{j}(t;\zeta)\vseq_{j}(t;\zeta)\right) = \zeta^T\Sigma\zeta\;,
\end{align*}
it is sufficient by~(\ref{eq:LimitvSecSup}) and~(\ref{eq:LimitvSecCov}) to prove the result for $\dimV=1$, in which case we simply denote $\vseq_{1,j}(t)$ by  $\vseq_{j}(t)$ and
$\Sigma$ by $\sigma^2$. Let
$(m_j)$ be a sequence of integers tending to infinity with $j$, such that 
$$
\sum_{|t|\geq m_j}\vseq_{j}^2(t)\leq 2^{-j} \;.
$$
We now show that the Lindeberg conditions hold for the sequence $\sum_{|t|\leq m_j} \vseq_{j}(t) \xi_t$. The first holds because,
by~(\ref{eq:LimitvSecCov}), 
$$
\lim_{j\to\infty}\sum_{|t|\leq m_j}\PVar(\vseq_{j}(t) \xi_t)=\lim_{j\to\infty}\sum_{|t|\leq m_j}\vseq_{j}^2(t)\eqdef\sigma^2\;. 
$$
The second holds because, for all $\epsilon>0$,
$$
\sum_{|t|\leq m_j}\PE\left[\vseq_{j}^2(t) \xi^2_t\;\1(|\vseq_{j}(t) \xi_t|\geq\epsilon)\right]
\leq\left(\sum_{t\in\Zset}\vseq_{j}^2(t)\right)\PE\left[\xi^2_0\;\1(|\xi_0|\geq\epsilon/S_j)\right]\;,
$$
where $S_j=\sup_{t\in\Zset} |\vseq_{j}(t)|$, and, by~(\ref{eq:LimitvSecSup}), the right-hand side of the last display
tends to 0 as $j\to\infty$. 
This concludes the proof. 
\end{proof}

The second Central Limit Theorem deals with  $m$-dependent arrays.
Recall that $\{Y_k\}$ is said to be $m$-dependent if, for all $p\geq1$ and all  
$k_1,\dots,k_p$ such that  $k_1+m\leq k_2,\dots,k_{p-1}+m\leq k_p$, $Y_{k_1},\dots,Y_{k_p}$ are independent. 

\begin{proposition}\label{prop:cltMdep}
Let $m$ be a fixed integer and $(n_j)$ a sequence of integers such that $n_j\to\infty$ as $j\to\infty$. Let $\{ Y_{j,k},\;
k=0,\dots,n_j-1,\, j\geq1\}$ be an array of $\Rset^d$-valued random vectors, such that, for each $j\geq0$, $\{ Y_{j,k},\;
k=0,\dots,n_j-1\}$ has zero mean, is strictly stationary and $m$-dependent. Assume that there exists a $d\times d$ matrix 
$\Gamma$ and a centered stationary $\Rset^d$-valued process $\{\bY_k,\; k\geq0\}$ with finite variance such that
\begin{align}
\label{eq:CLTMdep-LimDist}
&Y_{j,\centerdot} \cl \bY_\centerdot\quad\text{as}\quad j\to\infty\;, \\
\label{eq:CLTMdep-LimVar}
&\lim_{j\to\infty}  \PCov\left(Y_{j,k},Y_{j,k'}\right) = \PCov\left(\bY_{k},\bY_{k'}\right) \quad\text{for all}\quad k,k'\geq0 \\
  \label{eq:CLTMdep-LimVarLJ}
&\lim_{l\to\infty}\lim_{j\to\infty}\PCov\left(l^{-1/2}\sum_{k=0}^{l-1}Y_{j,k}\right) = \Gamma \;.
\end{align}
Then we have, as $j\to\infty$,
\begin{equation}
  \label{eq:CLTMDEP}
n_j^{-1/2}\sum_{k=0}^{n_j-1}Y_{j,k}\cl\calN(0,\Gamma) \; .
\end{equation}
\end{proposition}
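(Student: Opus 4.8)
The plan is to use the classical Bernstein big-block/small-block decomposition: for a fixed block length $l$ we reduce the $m$-dependent array to a row-wise i.i.d.\ triangular array, apply the Lindeberg--Feller theorem, and then let $l\to\infty$ via the standard approximation theorem for weak convergence. By the Cram\'er--Wold device it suffices to prove a one-dimensional statement, so fix $\zeta\in\Rset^{d}$ and set $S_j=n_j^{-1/2}\sum_{k=0}^{n_j-1}\zeta^{T}Y_{j,k}$; we must show $S_j\cl\calN(0,\zeta^{T}\Gamma\zeta)$.

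Fix an integer $l\geq m$, put $p=l+m$ and $r=r_j=\lfloor n_j/p\rfloor$, and split $\{0,\dots,n_j-1\}$ into $r$ consecutive ``big blocks'' of length $l$, each followed by a ``small block'' of length $m$, plus a leftover block of length $<p$. Write $U_{j,i}$ (resp.\ $V_{j,i}$) for the sum of the $\zeta^{T}Y_{j,k}$ over the $i$-th big (resp.\ small) block, and $R_j$ for the sum over the leftover block, so that $S_j=n_j^{-1/2}\sum_{i=0}^{r-1}U_{j,i}+n_j^{-1/2}\sum_{i=0}^{r-1}V_{j,i}+n_j^{-1/2}R_j$. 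Consecutive big blocks are separated by $m$ time indices, so by $m$-dependence and strict stationarity $U_{j,0},\dots,U_{j,r-1}$ are i.i.d.\ for each fixed $j$, and likewise for $V_{j,0},\dots,V_{j,r-1}$ since $l\geq m$; also $r_j\to\infty$ because $p$ is fixed and $n_j\to\infty$.

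For the big-block term write $n_j^{-1/2}\sum_iU_{j,i}=(rl/n_j)^{1/2}(rl)^{-1/2}\sum_iU_{j,i}$, where $rl/n_j\to l/p$. The row-wise i.i.d.\ array $\{(rl)^{-1/2}U_{j,i}\}$ has total variance $l^{-1}\PVar(U_{j,1})=l^{-1}\sum_{k,k'=0}^{l-1}\zeta^{T}\PCov(Y_{j,k},Y_{j,k'})\zeta\to l^{-1}\zeta^{T}\Gamma_l\zeta$ as $j\to\infty$ by \eqref{eq:CLTMdep-LimVar}, where $\Gamma_l=\PVar(\sum_{k=0}^{l-1}\bY_k)$; by \eqref{eq:CLTMdep-LimDist}, $U_{j,1}=\sum_{k=0}^{l-1}\zeta^{T}Y_{j,k}\cl\sum_{k=0}^{l-1}\zeta^{T}\bY_k$, and since its second moment converges to that of the limit, $\{U_{j,1}^{2}\}_j$ is uniformly integrable. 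As $r_j\to\infty$, this gives $l^{-1}\PE[U_{j,1}^{2}\,\1(|U_{j,1}|>\ep\sqrt{rl})]\to0$ for every $\ep>0$, i.e.\ the Lindeberg condition, so $(rl)^{-1/2}\sum_iU_{j,i}\cl\calN(0,l^{-1}\zeta^{T}\Gamma_l\zeta)$ and hence $n_j^{-1/2}\sum_iU_{j,i}\cl\calN(0,p^{-1}\zeta^{T}\Gamma_l\zeta)$ as $j\to\infty$. The other two terms are negligible in $L^{2}$: $\PVar(n_j^{-1/2}\sum_iV_{j,i})=(r/n_j)\PVar(V_{j,1})\to p^{-1}\zeta^{T}\PVar(\sum_{k=0}^{m-1}\bY_k)\zeta=:\eta_l$ by \eqref{eq:CLTMdep-LimVar}, while $\PVar(n_j^{-1/2}R_j)\leq p^{2}n_j^{-1}\sup_{j,k}\PVar(\zeta^{T}Y_{j,k})\to0$ as $j\to\infty$ for each fixed $l$. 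Since $p^{-1}\Gamma_l=\tfrac{l}{l+m}(l^{-1}\Gamma_l)$ and $l^{-1}\Gamma_l\to\Gamma$ as $l\to\infty$ by \eqref{eq:CLTMdep-LimVarLJ}, we get $\calN(0,p^{-1}\zeta^{T}\Gamma_l\zeta)\cl\calN(0,\zeta^{T}\Gamma\zeta)$, and $\eta_l\to0$ because $p\to\infty$. Writing $S_j$ as the big-block term $X_j^{(l)}=n_j^{-1/2}\sum_iU_{j,i}$ plus a mean-zero remainder whose variance is at most $2\eta_l+o(1)$, Chebyshev gives $\lim_l\limsup_j\prob(|S_j-X_j^{(l)}|>\ep)=0$; the standard approximation theorem for weak convergence (if $X_j^{(l)}\cl G_l$ as $j\to\infty$ for each $l$, $G_l\cl G$ as $l\to\infty$, and $\lim_l\limsup_j\prob(|S_j-X_j^{(l)}|>\ep)=0$ for all $\ep>0$, then $S_j\cl G$) then yields $S_j\cl\calN(0,\zeta^{T}\Gamma\zeta)$, and Cram\'er--Wold gives \eqref{eq:CLTMDEP}.

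The main obstacle is handling the double limit cleanly: one must arrange the block structure so that the small-block and leftover contributions vanish in $L^{2}$ first as $j\to\infty$ and then as $l\to\infty$, and, above all, verify the Lindeberg condition for the big blocks by extracting uniform integrability of the block sums from the joint use of the f.d.d.\ convergence \eqref{eq:CLTMdep-LimDist} and the covariance convergence \eqref{eq:CLTMdep-LimVar}. Once these are in place, matching $p^{-1}\Gamma_l$ with $\Gamma$ through \eqref{eq:CLTMdep-LimVarLJ} and applying the approximation theorem are routine bookkeeping.
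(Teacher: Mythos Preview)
Your proof is correct and follows essentially the same approach as the paper: reduction to $d=1$ by Cram\'er--Wold, a Bernstein big-block/small-block decomposition with block lengths $l$ and $m$, Lindeberg--Feller on the i.i.d.\ big-block sums (with the Lindeberg condition obtained from uniform integrability via \eqref{eq:CLTMdep-LimDist} and \eqref{eq:CLTMdep-LimVar}), and then the approximation theorem (Billingsley, Theorem~3.2) to pass from $l$ fixed to $l\to\infty$ using \eqref{eq:CLTMdep-LimVarLJ}. The only cosmetic difference is your bound on $\PVar(n_j^{-1/2}R_j)$ via $p^2 n_j^{-1}\sup_{j,k}\PVar(\zeta^T Y_{j,k})$, whereas the paper bounds it by the maximum of finitely many limit variances; both are valid since \eqref{eq:CLTMdep-LimVar} with $k=k'$ gives $\sup_j\PVar(\zeta^T Y_{j,0})<\infty$.
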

\begin{proof}
We may suppose that $d=1$ since the vector case follows by the Cramér-Wold device. For convenience, we set $\Gamma=\sigma^2$.    
Let $s$ be a positive integer larger than $m$.
We decompose of $\sum_{k=0}^{n_j-1}Y_{j,k}$ in sums of random variables spaced by $m$, as follows:
\begin{equation}
  \label{eq:MdepDecomp}
\sum_{k=0}^{n_j-1}Y_{j,k}=\sum_{k=0}^{p_j}S_{j,k}^{(s)}  + \sum_{k=0}^{p_j}T_{j,k}^{(s)}+ R_j^{(s)}\;,  
\end{equation}
where 
$$
S_{j,k}^{(s)}= \sum_{i=0}^{s-1}Y_{j,k(m+s)+i},\quad
T_{j,k}^{(s)}= \sum_{i=0}^{m-1}Y_{j,k(m+s)+s+i}
\quad\text{and}\quad R_j^{(s)}=\sum_{i=0}^{q_j-1}Y_{j,p_j(m+s)+i} \;,
$$
and where 
$p_j$ and $q_j$ are the non-negative integers defined by the Euclidean division 
\begin{equation}
  \label{eq:euclide}
  n_j=p_j(m+s)+q_j\quad\text{with}\quad q_j\in\{0,\dots,m+s-1\}\;.
\end{equation} 

The $m$-dependence and the strict stationarity of the sequences $Y_{j,\centerdot}$ ensure
that for all $j\geq0$ and all $s\geq m$, $S_{j,\centerdot}^{(s)}$ and $T_{j,\centerdot}^{(s)}$ are sequences of centered independent and
identically distributed random variables. Hence, by~(\ref{eq:CLTMdep-LimVar}), we have
\begin{equation}
  \label{eq:Lindeberg-mDep-Cond1}
\lim_{j\to\infty}\sum_{k=0}^{p_j}\PVar\left(p_j^{-1/2}S_{j,k}^{(s)} \right)=
\lim_{j\to\infty}\PVar\left(S_{j,0}^{(s)} \right)=\PVar\left(\bS^{(s)}\right),
\end{equation}
where
\begin{equation*}
\bS^{(s)}\eqdef\sum_{i=0}^{s-1}\bY_{i} \;,
\end{equation*}
\begin{equation}
  \label{eq:Lindeberg-mDep-TjVar}
\lim_{j\to\infty}\PVar\left(p_j^{-1/2}\sum_{k=0}^{p_j}T_{j,k}^{(s)} \right) =
\lim_{j\to\infty}\PVar\left(T_{j,0}^{(s)} \right) = 
\PVar\left(\bS^{(m)}\right)
\end{equation}
and
\begin{equation}
  \label{eq:Lindeberg-mDep-RjVar}
\limsup_{j\to\infty}\PVar\left(R_j^{(s)}\right) \leq \max\left\{\PVar\left(\bS^{(t)}\right)\;:\;t=0,1,\dots,m+s-1\right\} <\infty \; .
\end{equation}
In addition, for any $\epsilon>0$,
$$
\sum_{k=0}^{p_j}\PE\left[(p_j^{-1/2}S_{j,k}^{(s)})^2\1(p_j^{-1/2}|S_{j,k}^{(s)}|>\epsilon) \right]
= \PE\left[(S_{j,0}^{(s)})^2\1(p_j^{-1/2}|S_{j,k}^{(s)}|>\epsilon) \right]
$$
and hence, since $p_j\to\infty$,
\begin{multline}\label{eq:Lindeberg-mDep-Cond2Bound}
\limsup_{j\to\infty}\sum_{k=0}^{p_j}\PE\left[(p_j^{-1/2}S_{j,k}^{(s)})^2\1(p_j^{-1/2}|S_{j,k}^{(s)}|>\epsilon) \right]\\
\leq \inf_{M>0} \limsup_{j\to\infty}\PE\left[(S_{j,0}^{(s)})^2\1(|S_{j,k}^{(s)}|>M) \right] \; .
\end{multline}
But using~(\ref{eq:CLTMdep-LimDist}), we have 
$$
S_{j,0}^{(s)}\cl \bS^{(s)}\quad\text{as}\quad j\to\infty\;.
$$
Hence, denoting by $\phi_M$ some continuous $\Rset_+\to[0,1]$ function satisfying $\1(x\leq M/2)\leq \phi_M(x) \leq\1(x\leq
M)$, so that $x^2\phi_M(x)$ is continuous and bounded, we have 
\begin{align*}
  \PE\left[(S_{j,0}^{(s)})^2\1(|S_{j,k}^{(s)}|\leq M) \right]
&\geq\PE\left[(S_{j,0}^{(s)})^2\phi_M(S_{j,0}^{(s)})\right]\\
&\to \PE\left[(\bS^{(s)})^2\phi_M(\bS^{(s)})\right]\quad\text{as}\quad j\to\infty\\
&\to \PE\left[(\bS^{(s)})^2\right]
\quad\text{as}\quad M\to\infty\; .
\end{align*}
Using~(\ref{eq:CLTMdep-LimVar}), we have
$\PE\left[(S_{j,0}^{(s)})^2\right]\to\PE\left[(\bS^{(s)})^2\right]$ as $j\to\infty$ and hence, for any $M>0$,
$$
 \limsup_{j\to\infty}\PE\left[(S_{j,0}^{(s)})^2\1(|S_{j,k}^{(s)}|>M) \right] 
= \PE\left[(\bS^{(s)})^2\right] - \liminf_{j\to\infty}\PE\left[(S_{j,0}^{(s)})^2\1(|S_{j,k}^{(s)}|\leq M) \right]
$$
The two last displays and~(\ref{eq:Lindeberg-mDep-Cond2Bound}) imply the second Lindeberg Condition, namely,
$$
\limsup_{j\to\infty}\sum_{k=0}^{p_j}\PE\left[(p_j^{-1/2}S_{j,k}^{(s)})^2\1(p_j^{-1/2}|S_{j,k}^{(s)}|>\epsilon) \right] =0 
\quad\text{for any $\epsilon>0$}\;.
$$
Using this and~(\ref{eq:Lindeberg-mDep-Cond1}), we may apply
the Lindeberg-Feller CLT for arrays of independent r.v.'s and we obtain
\begin{equation}
  \label{eq:limbS1}
  p_j^{-1/2}\sum_{k=0}^{p_j}S_{j,k}^{(s)} \cl \calN\left(0,\PVar\left(\bS^{(s)}\right)\right) \; .  
\end{equation}
By~(\ref{eq:CLTMdep-LimVarLJ}), we have
\begin{equation}
  \label{eq:limbS2}
\lim_{s\to\infty}s^{-1} \PVar\left(\bS^{(s)}\right) = \sigma^2 \; .
\end{equation}
Using~(\ref{eq:limbS1}) and~(\ref{eq:euclide}) with $j\to\infty$ and then~(\ref{eq:limbS2}) with $s\to\infty$ yields  
$$
n_j^{-1/2}\sum_{k=0}^{p_j}S_{j,k}^{(s)} \cl_{j\to\infty} \calN\left(0,(m+s)^{-1}\PVar\left(\bS^{(s)}\right)\right)
\cl_{s\to\infty}\calN\left(0,\sigma^2\right)\;.
$$
On the other hand, by~(\ref{eq:MdepDecomp}),~(\ref{eq:Lindeberg-mDep-TjVar}) and~(\ref{eq:Lindeberg-mDep-RjVar}), we have 
$$
\limsup_{j\to\infty}\PE\left[\left(n_j^{-1/2}\sum_{k=0}^{n_j-1}Y_{j,k}-n_j^{-1/2}\sum_{k=0}^{p_j}S_{j,k}^{(s)}\right)^2 \right]
\leq (m+s)^{-1}\PVar\left(\bS^{(m)}\right)\to 0
$$
as $s\to\infty$. Using the last two displays and~\cite[Theorem~3.2]{billingsley:1999}, we obtain~(\ref{eq:CLTMDEP}), which
concludes the proof.
\end{proof}

\begin{proof}[Proof of Theorem~\ref{thm:CLTlinear}]
The proof is in three steps. We show in a first step the convergence of the process 
$[Z_{i,j,k}, i=1,\dots,\dimV,\;k=0,\dots,n_j]$ as $j\to\infty$ towards a Gaussian limit.
In the second step we prove Theorem~\ref{thm:CLTlinear} under the additional assumption that the sequence
$$
Y_{j,k}\eqdef\left[
\begin{array}{c}
\{Z_{1,j,k}^2 -\PE[Z_{1,j,k}^2]\} \\
\vdots\\
\{Z_{\dimV,j,k}^2 -\PE[Z_{\dimV,j,k}^2]\}
\end{array}
\right],\quad k=0,\dots,n_j-1
$$
is $m$-dependent. The third step exhibits an $m$-dependent approximation and extends the $m$-dependent case to the general
case. The proof uses a number of auxiliary results proved in Section~\ref{sec:technical-lemmas}.

\noindent\textbf{Step~1.} We shall apply Proposition~\ref{prop:CLTlinearForms}. By
Relation~(\ref{eq:Limcov-neq1}) in Proposition~\ref{prop:Limcov}, we get, for
all $i,i'=1,\dots,\dimV$ and all $k,k'\in\Zset$, as $j\to\infty$,
\begin{equation}
  \label{eq:CovarianceZ}
  \PCov\left(Z_{i,j,k},Z_{i',j,k'}\right)
\to \limcons_{i,i'}\;
\int_{-\infty}^{\infty}\vfoncsym_{i,i'}(\lambda)\;\rme^{\rmi \lambda(k'-k)}\,\rmd\lambda \;.
\end{equation}
Moreover, by~(\ref{eq:unfiBoundvfonc}), one has, for all $i=1,\dots,\dimV$,
\begin{align*}
\sup_{t\in\Zset}\left|\vseq_{i,j}(t)\right|
&\leq 2\;(2\pi)^{-1/2}\int_{0}^{\pi} \left|\vfonc_{i,j}(\lambda)\right|\,\rmd\lambda \\
&\leq 2\;(2\pi)^{-1/2}\decim_j^{-1/2}\;\int_{-\decim_j\lambda_{i,j}}^{\decim_j(\pi-\lambda_{i,j})}(1+|\lambda|)^{-\delta}\,\rmd\lambda \;,
\end{align*}
which, by~(\ref{eq:Limitfreq}), tends to 0 as $j\to\infty$ for any $\delta>1/2$. Hence, by Proposition~\ref{prop:CLTlinearForms}, for any $p\geq1$, any
$i_1,\dots, i_p\in\{1,\dots,\dimV\}$ and any $k_1,\dots,k_p\in\Zset$, we have, as $j\to\infty$,
\begin{equation}
  \label{eq:CLTZaloneVect}
\left[
\begin{array}{c}
Z_{i_1,j,k_1} \\
\vdots\\
Z_{i_p,j,k_p}
\end{array}
\right]\cl\calN(0,\Sigma) 
\end{equation}
where $\Sigma$ is the covariance matrix with entries  given for all $1\leq n, n'\leq p$ by
\begin{align*}
\Sigma_{n,n'}&=
 \limcons_{i_n,i_{n'}}\;
\int_{-\infty}^{\infty}\vfoncsym_{i_n,i_{n'}}(\lambda)\;\rme^{\rmi \lambda(k_{n'}-k_n)}\,\rmd\lambda \;.
\end{align*}
Expressing this integral as $\sum_p\int_{-\pi+2p\pi}^{\pi+2p\pi}$, we get
\begin{align*}
\Sigma_{n,n'} = \limcons_{i_n,i_{n'}}\;
\int_{-\pi}^{\pi}\left(\sum_{p\in\Zset}\vfoncsym_{i_n,i_{n'}}(\lambda+2p\pi)\right)
\;\rme^{\rmi \lambda(k_{n'}-k_n)}\,\rmd\lambda \; .
\end{align*}
The convergence~(\ref{eq:CLTZaloneVect}) can be written equivalently as
\begin{equation}
  \label{eq:CLTZaloneProc}
\left[
\begin{array}{c}
Z_{1,j,\centerdot} \\
\vdots\\
Z_{\dimV,j,\centerdot}
\end{array}
\right]\cl\bZ_\centerdot \;,
\end{equation}
as $j\to\infty$, where $\{\bZ_k=[Z_{1,\infty,k}\,\,\dots\,\,Z_{\dimV,\infty,k}]^T,\;k\geq0\}$ is a stationary Gaussian $\Rset^{\dimV}$-valued process with spectral density matrix function $D$
with entries
$$
D_{i,i'}(\lambda)= \limcons_{i,i'}\;
\sum_{p\in\Zset}\vfoncsym_{i_n,i_{n'}}(\lambda+2p\pi),\quad 1\leq i, i'\leq \dimV\; .
$$
\noindent\textbf{Step~2.} 
In this step, we prove~(\ref{eq:CenteredZ}), assuming that for each $j\geq1$, $\{Y_{j,k},\;k=0,\dots,n_j-1\}$ is $m$-dependent.
We shall apply Proposition~\ref{prop:cltMdep} under this additional assumption. We thus need to
show that~(\ref{eq:CLTMdep-LimDist})--(\ref{eq:CLTMdep-LimVarLJ}) hold. Relations~(\ref{eq:varianceZlim}),~(\ref{eq:CLTZaloneProc}) and
the continuous mapping theorem imply~(\ref{eq:CLTMdep-LimDist}) with 
$$
\bY_k=
\left[
\begin{array}{c}
Z_{1,\infty,k}^2-\PE[Z_{1,\infty,k}^2] \\
\vdots\\
Z_{\dimV,\infty,k}^2-\PE[Z_{\dimV,\infty,k}^2]
\end{array}
\right] \;.
$$
Since $\bZ_\centerdot$ is Gaussian, we have, for all $k,k'\geq0$ and all $i,i'=1,\dots,\dimV$,
\begin{align*}
\PCov\left(\bY_{k,i},\bY_{k',i'}\right)&=2\PCov^2\left(Z_{i,\infty,k},Z_{i',\infty,k'}\right)\\
&
=2\;
\limcons_{i,i'}^2\;
\left(\int_{-\infty}^{\infty}\vfoncsym_{i,i'}(\lambda)\;\rme^{\rmi \lambda(k'-k)}\,\rmd\lambda\right)^2\;.
\end{align*}
Hence Relation~(\ref{eq:LimcovSquare-neq1}) in Corollary~\ref{cor:Limcov} and the previous display
yield~(\ref{eq:CLTMdep-LimVar}). The final condition~(\ref{eq:CLTMdep-LimVarLJ}) follows from Relation~(\ref{eq:Limcov}) of
Corollary~\ref{cor:Limcov} with a covariance matrix $\Gamma$ with entries~(\ref{eq:GammaDef}). Applying
Proposition~\ref{prop:cltMdep} then yields~(\ref{eq:CenteredZ}), with $\Gamma$ given by~(\ref{eq:GammaDef}).

\medskip
\noindent\textbf{Step~3.} 
Let $K(t)$ be a non-negative infinitely differentiable function defined on $t\in\Rset$ whose support is included in $[-1/2,1/2]$
and such that $K(0)=1$. We will denote by $\hat{K}$ its Fourier transform,
$$
\hat{K}(\xi)=\int_{-\infty}^\infty K(t) \rme^{-\rmi \xi t} \, \rmd t \; .
$$
Observe that, by the assumptions on $K$, $\hat{K}(\xi)$ decreases faster than any polynomial as $|\xi|\to\infty$. 
In particular $\hat{K}(\xi)$ is integrable on $\xi\in\Rset$ and, for all $t\in\Rset$, 
\begin{align}
\label{eq:FourierInvK}
 K(t) =\frac1{2\pi}\int_{-\infty}^\infty \hat{K}(\xi) \rme^{\rmi \xi t} \, \rmd\xi \; .
\end{align}

The function $K$ will be used to approximate the $\vseq^{(m)}_{i,j}$ sequence by a sequence whose dependence structure can be
controlled. We thus define, for any $i=1,\dots,\dimV$, $j\geq0$ and $m\geq1$,
$$
\vseq^{(m)}_{i,j}(t)= \vseq_{i,j}(t) \,K(t/(m\decim_j)), 
$$
which vanishes for all $|t|\geq m\decim_j/2$,
and we define $Z^{(m)}_{i,j,k}$ and ${\vfonc_{i,j}}^{(m)}(\lambda)$ accordingly. 

Let $m$ be a fixed integer. Then, for all $j\geq0$, 
$$
\{[Z^{(m)}_{1,j,k}\,\,\dots\,\,Z^{(m)}_{\dimV,j,k}]^T,\,k\in\Zset\}
$$ 
is an $m$-dependent sequence of vectors.
We shall now show that $\{{\vfonc_{i,j}}^{(m)},\,j\geq0\}$ satisfy conditions similar to~(\ref{eq:unfiBoundvfonc}) 
and~(\ref{eq:Limitvfonc}) and then apply Step~2. Using~(\ref{eq:FourierInvK}) and~(\ref{eq:vfoncdef}) in the equation
$$
{\vfonc_{i,j}}^{(m)}(\lambda)= (2\pi)^{-1/2} \, \sum_{t\in\Zset} \vseq_{i,j}(t) \,K(t/(m\decim_j)) \rme^{-\rmi \lambda t}  \;, 
$$
we get that
\begin{align}
\label{eq:vfondMdep}
{\vfonc_{i,j}}^{(m)}(\lambda)=\frac{m}{2\pi}\int_{-\infty}^\infty \hat{K}(m\xi)\vfonc_{i,j}(\lambda-\decim_j^{-1}\xi)\,\rmd\xi \; .
\end{align}
It follows from Condition~(\ref{eq:unfiBoundvfonc}) that there exists a
constant $C>0$ such that for all $j\geq0$ and $\lambda\in[0,\pi)$, 
\begin{equation}
  \label{eq:vfoncBoundhT}
  \left|\vfonc_{i,j}(\lambda)\right|\leq 
C\; \decim_j^{1/2} (1+|\decim_j\lambda-\decim_j\lambda_{i,j}|)^{-\delta} 
\; .   
\end{equation}
Using~(\ref{eq:symVfonc}) and the $(2\pi)$-periodicity of $\vfonc_{i,j}$, we can express~(\ref{eq:vfoncBoundhT}) using the
symmetric   $(2\pi\decim_j)$-periodic function 
$\poldecrease_{2\pi\decim_j,\decim_j\lambda_{i,j}}(\xi)$ defined in Lemma~\ref{lem:polynomDecrease} and equal to
$(1+|\xi-\decim_j\lambda_{i,j}|)^{-\delta}$ for $0\leq\xi\leq\pi\decim_j$. With $\xi=\decim_j\lambda$, one gets
\begin{equation*}
\left|\vfonc_{i,j}(\lambda)\right|\leq C\; \decim_j^{1/2} \; \poldecrease_{2\pi\decim_j,\decim_j\lambda_{i,j}}(\decim_j\lambda)
\; ,\quad j\geq0,\;\lambda\in\Rset\;. 
\end{equation*}
Let $g(t)=m|\hat{K}(mt)|$ and observe that $\|g\|_1=\|\hat{K}\|_1<\infty$ and
$$
g(t)\leq c_0\;m\;(m|t|)^{-\delta-1}\leq c_0|t|^{-\delta-1}\quad\text{for all $|t|\geq1$ and $m\geq1$}\;,
$$
where $c_0$ is a positive constant such that $|\hat{K}(u)|\leq c_0|u|^{-\delta-1}$ for $|u|\geq1$.
Applying these bounds to~(\ref{eq:vfondMdep}) gives
$$
\left|{\vfonc_{i,j}}^{(m)}(\lambda)\right|\leq C\decim_j^{1/2}
\int_{-\infty}^{\infty}g(\xi)\;
\poldecrease_{2\pi\decim_j,\decim_j\lambda_{i,j}}(\decim_j\lambda-\xi)\;\rmd\xi \;.
$$
Applying Lemma~\ref{lem:polynomDecrease} to this convolution, we get
$$
\left|{\vfonc_{i,j}}^{(m)}(\lambda)\right|
\leq C\;\decim_j^{1/2}\;(1+\decim_j|\lambda-\lambda_{i,j}|)^{-\delta} \;,
$$
for different constants $C$ depending neither on $m\geq1$, $j\geq0$ nor on $\lambda\in[0,\pi)$. One has therefore the
following version of~(\ref{eq:unfiBoundvfonc}) for ${\vfonc_{i,j}}^{(m)}$, uniform in $m\geq1$:
\begin{align}
\label{eq:unfiBoundvfoncMdep}
\sup_{j\geq0} \;\;\sup_{m\geq1} \;\;\sup_{\lambda\in[0,\pi)} \decim_j^{-1/2}|{\vfonc_{i,j}}^{(m)}(\lambda)|(1+\decim_j|\lambda-\lambda_{i,j}|)^{\delta} < \infty \; .
\end{align}
To get a version of~(\ref{eq:Limitvfonc}) for ${\vfonc_{i,j}}^{(m)}$, observe that, by~(\ref{eq:vfondMdep}), we have
$$
\decim_j^{-1/2}{\vfonc_{i,j}}^{(m)}(\decim_j^{-1}\lambda+\lambda_{i,j})=\frac{m}{2\pi}\int_{-\infty}^\infty
\hat{K}(m\xi)\left[\decim_j^{-1/2}\vfonc_{i,j}(\decim_j^{-1}(\lambda-\xi)+\lambda_{i,j})\right]\,\rmd\xi. 
$$
Condition~(\ref{eq:unfiBoundvfonc}) implies that the term in brackets is bounded independently of $\xi$ and $j$
and hence by~(\ref{eq:Limitvfonc}) and dominated convergence, one has
\begin{align}
\label{eq:LimitvfoncMdep}
\lim_{j\to\infty}\decim_j^{-1/2}{\vfonc_{i,j}}^{(m)}(\decim_j^{-1}\lambda+\lambda_{i,j}) = 
{\vfonc_{i,\infty}}^{(m)}(\lambda) \quad\text{for all}\quad\lambda\in\Rset\;,
\end{align}
where
\begin{align}
\label{eq:DefLimitvfoncMdep}
{\vfonc_{i,\infty}}^{(m)}(\lambda) \eqdef
 \frac{m}{2\pi}\int_{-\infty}^\infty\hat{K}(m\xi)\vfonc_{i,\infty}(\lambda-\xi)\,\rmd\xi \; .
\end{align}
Note that ${\vfonc_{i,\infty}}^{(m)}(\lambda)$ is an approximating sequence of ${\vfonc_{i,\infty}}(\lambda)$ in the sense
that,
since $\vfonc_{i,\infty}$ is bounded (by~(\ref{eq:LimitvfoncBound})) and  continuous (by hypothesis), and 
since $(2\pi)^{-1}\int_{-\infty}^\infty\hat{K}(u)\,du=K(0)=1$, for all $\lambda\in\Rset$, 
\begin{align}
\label{eq:vFoncMToVfonc}
{\vfonc_{i,\infty}}^{(m)}(\lambda)=\frac{1}{2\pi}\int_{-\infty}^\infty\hat{K}(u)\vfonc_{i,\infty}(\lambda-u/m)\,du
\to \vfonc_{i,\infty}(\lambda)\quad\text{as $m\to\infty$}\;.
\end{align}
Relations~(\ref{eq:unfiBoundvfoncMdep}) and~(\ref{eq:LimitvfoncMdep}) are the corresponding versions of
Conditions~(\ref{eq:unfiBoundvfonc}) and~(\ref{eq:Limitvfonc}) for ${\vfonc_{i,j}}^{(m)}$. 
and since, we are in the $m$-dependent case, we may apply the result proved in Step~2, and obtain, as $j\to\infty$, 
\begin{equation}
\label{eq:CenteredZmDep}
n_j^{-1/2}
\left[
\begin{array}{c}
\sum_{k=0}^{n_j-1} \{Z_{1,j,k}^{(m)\,2} -\PE[Z_{1,j,k}^{(m)\,2}]\} \\
\vdots\\
\sum_{k=0}^{n_j-1} \{Z_{\dimV,j,k}^{(m)\,2} -\PE[Z_{\dimV,j,k}^{(m)\,2}]\}
\end{array}
\right]\cl \calN(0,\Gamma^{(m)})\;,
\end{equation}
where $\Gamma^{(m)}$ is the covariance matrix with entries
\begin{equation}
  \label{eq:GammaMDef}
\Gamma^{(m)}_{i,i'}\eqdef
2\pi\,\limcons_{i,i'}\, \int_{-\pi}^\pi \left|\sum_{p\in\Zset}
  {\vfoncsym_{i,i'}}^{(m)}(\lambda+2p\pi)\right|^2 \, \rmd\lambda ,
\quad 1\leq i, i'\leq \dimV\; ,
\end{equation}
where ${\vfoncsym_{i,i'}}^{(m)}$ is the equivalent of $\vfoncsym_{i,i'}$ in~(\ref{eq:vfoncsymDef}),
\begin{equation*}
  {\vfoncsym_{i,i'}}^{(m)}(\lambda)=
\frac12\left[\overline{{\vfonc_{i,\infty}}^{(m)}(-\lambda)}{\vfonc_{i',\infty}}^{(m)}(-\lambda)
+{\vfonc_{i,\infty}}^{(m)}(\lambda)\overline{{\vfonc_{i',\infty}}^{(m)}(\lambda)}\right],\quad\lambda\in\Rset\;.
\end{equation*}

To obtain the corresponding result~(\ref{eq:CenteredZ}) for the $\{Z_{i,j,k}\}$ sequence, we apply
\cite[Theorem~3.2]{billingsley:1999} as follows. We show that 
\begin{equation}
\label{eq:ApproxLimitCLT}
\lim_{m\to\infty} \Gamma^{(m)}_{i,i'}= \Gamma_{i,i'}\quad 1\leq i, i'\leq \dimV
\end{equation}
and, for all $i=1,\dots,\dimV$,
\begin{equation}
\label{eq:ApproxLimitCLTinterm}
\lim_{m\to\infty}\lim_{j\to\infty} 
\PVar\left(n_j^{-1/2}\sum_{k=0}^{n_j-1} Z_{i,j,k}^{(m)\,2}- 
n_j^{-1/2}\sum_{k=0}^{n_j-1} Z_{i,j,k}^{2}  \right) = 0 \; .
\end{equation}
Relation~(\ref{eq:ApproxLimitCLT}) says that the RHS of~(\ref{eq:CenteredZmDep}) converges in distribution to the RHS
of~(\ref{eq:CenteredZ}) and Relation~~(\ref{eq:ApproxLimitCLTinterm}) says that the LHS of~(\ref{eq:CenteredZmDep}) is a good approximation
to the LHS~(\ref{eq:CenteredZ}) by choosing $m$ arbitrary large.

To prove~(\ref{eq:ApproxLimitCLTinterm}), it is sufficient to establish the following equalities for all $i=1,\dots,\dimV$.
\begin{multline*}
\lim_{m\to\infty}\lim_{j\to\infty}\PCov\left(n_j^{-1/2}\sum_{k=0}^{n_j-1} Z_{i,j,k}^{(m)\,2}, n_j^{-1/2}\sum_{k=0}^{n_j-1}
  Z_{i,j,k}^{2} \right)\\
=\lim_{m\to\infty}\lim_{j\to\infty} \PVar\left(n_j^{-1/2}\sum_{k=0}^{n_j-1} Z_{i,j,k}^{(m)\,2} \right)
= \lim_{j\to\infty} \PVar\left(n_j^{-1/2}\sum_{k=0}^{n_j-1} Z_{i,j,k}^{2}\right)\; .
\end{multline*}
Using Relation~(\ref{eq:Limcov}) of Corollary~\ref{cor:Limcov}, the limits as $j\to\infty$ (an hence $n_j\to\infty$) in the
previous display are, respectively, 
$$
 \Gamma_{i,i}^{(m,\infty)}\eqdef
2\pi\,\limcons_{i,i'}\,\int_{-\pi}^\pi
\left|\sum_{p\in\Zset} {\vfoncsym_{i,i'}}^{(m,\infty)}(\lambda+2p\pi)\right|^2 \,
\rmd\lambda,\quad \Gamma_{i,i}^{(m)}
\quad\text{and}\quad \Gamma_{i,i} \; ,
$$
where $\Gamma_{i,i}^{(m)}$ is defined in~(\ref{eq:GammaMDef}) and $\Gamma_{i,i}$ in~(\ref{eq:GammaDef}) and
$$
{\vfoncsym_{i,i'}}^{(m,\infty)}(\lambda)=
\frac12\left[\overline{{\vfonc_{i,\infty}}^{(m)}(-\lambda)}{\vfonc_{i',\infty}}(-\lambda)
+{\vfonc_{i,\infty}}^{(m)}(\lambda)\overline{{\vfonc_{i',\infty}}(\lambda)}\right],\quad\lambda\in\Rset\;.
$$
Hence to prove~(\ref{eq:ApproxLimitCLT}) and~(\ref{eq:ApproxLimitCLTinterm}), it is sufficient to show that
\begin{equation}
  \label{eq:Cov-ConvLimInm}
  \lim_{m\to\infty}  \Gamma_{i,i'}^{(m,\infty)}=\lim_{m\to\infty}  \Gamma_{i,i'}^{(m)}=\Gamma_{i,i'},\quad i,i'=1,\dots,\dimV \;.  
\end{equation}
Observe first that Relations~(\ref{eq:unfiBoundvfoncMdep}) and~(\ref{eq:LimitvfoncMdep}) imply
\begin{align}
\label{eq:LimitvfoncMBound}
  \sup_{\lambda\in\Rset} \;\;\sup_{m\geq1}  \left|{\vfonc_{i,\infty}}^{(m)}(\lambda)\right|\; (1+|\lambda|)^{\delta} < \infty \;,
\end{align}
which is the uniform version of~(\ref{eq:LimitvfoncBound}). Eq.~(\ref{eq:Cov-ConvLimInm}) now follows
bfrom~(\ref{eq:vFoncMToVfonc}),~(\ref{eq:unfiBoundvfoncMdep}) and~(\ref{eq:LimitvfoncMBound}), and dominated convergence.
\end{proof}

\section{Proof of Theorem~\ref{thm:CLTlinearLocale}}
\label{sec:proof-theor-cltLocal}
The following proposition is the key point for proving Theorem~\ref{thm:CLTlinearLocale} since it shows how 
Condition~(\ref{eq:unfiBoundvfonc}) in Theorem~\ref{thm:CLTlinear} can be recovered for an approximation of the sample 
mean square, when using the alternative Condition~(\ref{eq:unfiBoundvfoncLocale}).
Condition~(\ref{eq:BoundvfoncLocaleNonZero}) in Theorem~\ref{thm:CLTlinearLocale} can then be used to control the sharpness
of the approximation.
\begin{proposition}
  \label{prop:ApproxLocalCondition}
Let $\{Z_{1,j,k},\;i=1,2, j\geq0, k\in\Zset\}$ be an array of $1$--dimensional decimated linear processes as defined
by~(\ref{eq:Zdef}). Assume that $\{\vseq_{1,j}(t),\;j\geq0,\;t\in\Zset\}$ satisfies~(\ref{eq:unfiBoundvfoncLocale}) for
$\delta>1/2$, a sequence $(\lambda_{1,j})$ taking its values in $[0,\pi)$ and some $\varepsilon>0$. 
Then there exists an array $\{\widehat{\vseq_{1,j}}(t),\;j\geq0,\;t\in\Zset\}$ whose Fourier series coincide with those
of $\{\vseq_{1,j}(t),\;j\geq0,\;t\in\Zset\}$ in $\varepsilon$-neighborhoods of the frequencies $\{\lambda_{1,j},\;j\geq0\}$
and satisfying~(\ref{eq:unfiBoundvfonc}), that is, such that
\begin{align}
\label{eq:vfoncWithHatequalsInNeighb}
&\widehat{\vfonc_{1,j}}(\lambda)=\vfonc_{1,j}(\lambda)
\quad\text{for all $\lambda\in(-\pi,\pi)$ such that $|\lambda-\lambda_{1,j}|\leq\varepsilon$}\;, \\
\label{eq:unfiBoundvfoncWithHat}
&\sup_{j\geq0} \sup_{\lambda\in[0,\pi)} \decim_j^{-1/2}|\widehat{\vfonc_{1,j}}(\lambda)|(1+\decim_j|\lambda-\lambda_{i,j}|)^\delta < \infty \;, 
\end{align}
and the following approximation holds.
\begin{equation}
  \label{eq:Rjdef}
  n_j^{-1/2}\left(\sum_{k=0}^{n_j-1} \{Z_{1,j,k}^{2} -\PE[Z_{1,j,k}^{2}]
\right)=
n_j^{-1/2}\sum_{k=0}^{n_j-1} \{\widehat{Z}_{1,j,k}^{2} -\PE[\widehat{Z}_{1,j,k}^{2}]\}
+R_j\;,
\end{equation}
where
\begin{equation}\label{eq:defZhat}
\widehat{Z}_{1,j,k}=\sum_{t\in\Zset}\widehat{\vseq_{1,j}}(\decim_jk-t)\xi_t\;,
\end{equation}
and, for some positive constant $C$ not depending on $j$,
\begin{equation}\label{eq:RjBound}
\PE\left[\left|R_j\right|\right]\leq C\;
\left[n_j^{1/2}I_j+I_j^{1/2}\right]\;,
\end{equation}
where
\begin{equation}\label{eq:Ijdef}
I_j\eqdef\int_{0}^\pi\1(|\lambda-\lambda_{1,\infty}|>\varepsilon)\;\left|\vfonc_{1,j}(\lambda)\right|^2\;\rmd\lambda \;.
\end{equation}
\end{proposition}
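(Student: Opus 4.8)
The plan is to realise $\widehat{\vseq_{1,j}}$ by a truncation on the Fourier side and then to control $R_j$ by splitting it into a ``pure residual'' part, estimated directly in $\mathrm{L}^1$, and a ``cross'' part, estimated through its variance. For the construction I would take $\widehat{\vfonc_{1,j}}=\vfonc_{1,j}\,\chi_j$, with $\chi_j$ the even, $2\pi$-periodic indicator (or a smooth version) of an $\varepsilon$-neighbourhood of $\{\lambda_{1,j},-\lambda_{1,j}\}$; evenness keeps $\widehat{\vseq_{1,j}}$ real-valued, and $\widehat{\vfonc_{1,j}}\in\ltwo(-\pi,\pi)$ so $\widehat{Z}_{1,j,k}$ in~(\ref{eq:defZhat}) is well defined. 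Then~(\ref{eq:vfoncWithHatequalsInNeighb}) holds by construction, and~(\ref{eq:unfiBoundvfoncWithHat}) follows from~(\ref{eq:unfiBoundvfoncLocale}): on the support of $\chi_j$ one has $|\lambda-\lambda_{1,\infty}|\le\varepsilon+|\lambda_{1,j}-\lambda_{1,\infty}|$, so after shrinking $\varepsilon$ slightly and using $\lambda_{1,j}\to\lambda_{1,\infty}$ this support lies where~(\ref{eq:unfiBoundvfoncLocale}) applies. Writing $\widetilde{\vseq}_{1,j}=\vseq_{1,j}-\widehat{\vseq_{1,j}}$ for the residual kernel, its Fourier series $\widetilde{\vfonc}_{1,j}=\vfonc_{1,j}(1-\chi_j)$ vanishes near $\pm\lambda_{1,j}$, and Parseval gives $\PE[\widetilde{Z}_{1,j,0}^2]=\|\widetilde{\vseq}_{1,j}\|_2^2=\int_{-\pi}^\pi|\widetilde{\vfonc}_{1,j}|^2\le C\,I_j$ with $I_j$ as in~(\ref{eq:Ijdef}); likewise $\sup_j\|\widehat{\vseq_{1,j}}\|_2^2<\infty$ by~(\ref{eq:unfiBoundvfoncWithHat}).

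From $Z_{1,j,k}=\widehat{Z}_{1,j,k}+\widetilde{Z}_{1,j,k}$ one has $Z_{1,j,k}^2-\widehat{Z}_{1,j,k}^2=2\widehat{Z}_{1,j,k}\widetilde{Z}_{1,j,k}+\widetilde{Z}_{1,j,k}^2$, so in~(\ref{eq:Rjdef}), $R_j=R_j^{(1)}+R_j^{(2)}$ with $R_j^{(2)}=n_j^{-1/2}\sum_{k=0}^{n_j-1}(\widetilde{Z}_{1,j,k}^2-\PE[\widetilde{Z}_{1,j,k}^2])$ and $R_j^{(1)}=2n_j^{-1/2}\sum_{k=0}^{n_j-1}(\widehat{Z}_{1,j,k}\widetilde{Z}_{1,j,k}-\PE[\widehat{Z}_{1,j,k}\widetilde{Z}_{1,j,k}])$. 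Since $\{\widetilde{Z}_{1,j,k}\}_k$ is stationary and $\PE|X-\PE X|\le 2\PE X$ for $X\ge0$, the residual part obeys $\PE|R_j^{(2)}|\le n_j^{1/2}\cdot 2\PE[\widetilde{Z}_{1,j,0}^2]\le C\,n_j^{1/2}I_j$, which is the first term in~(\ref{eq:RjBound}); this needs no variance computation.

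For the cross part I would use $\PE|R_j^{(1)}|\le 2\bigl\{\PVar(n_j^{-1/2}\sum_k\widehat{Z}_{1,j,k}\widetilde{Z}_{1,j,k})\bigr\}^{1/2}$ and prove the variance is $\le C\,I_j$. The key structural fact is that $\widehat{\vfonc_{1,j}}$ and $\widetilde{\vfonc}_{1,j}$ have disjoint supports, whence $\PCov(\widehat{Z}_{1,j,k},\widetilde{Z}_{1,j,k'})=\int\widehat{\vfonc_{1,j}}\overline{\widetilde{\vfonc}_{1,j}}\,\rme^{\rmi\decim_j(k'-k)\lambda}\rmd\lambda=0$ for all $k,k'$. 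Expanding $\PCov(\widehat{Z}_{1,j,k}\widetilde{Z}_{1,j,k},\widehat{Z}_{1,j,k'}\widetilde{Z}_{1,j,k'})$ by the product--cumulant identity used in the proof of Lemma~\ref{lem:DecompAjBj} (Assumption~\ref{it:A1} being needed for the $\kappa_4$ term), the mixed covariance term drops and one is left with $\sum_{k,k'}\widehat{\gamma}_j(\decim_j(k-k'))\widetilde{\gamma}_j(\decim_j(k-k'))+\kappa_4\sum_{t}c_{j,t}^2$, where $\widehat{\gamma}_j,\widetilde{\gamma}_j$ are the autocovariances of the undecimated filtered sequences and $c_{j,t}=\sum_k\widehat{\vseq_{1,j}}(\decim_jk-t)\widetilde{\vseq}_{1,j}(\decim_jk-t)$. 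Setting $\rho_j=\widehat{\vseq_{1,j}}\widetilde{\vseq}_{1,j}$, the $\kappa_4$ sum is $\sum_t c_{j,t}^2=\sum_{\tau}(n_j-|\tau|)_+\gamma_{\rho_j}(\decim_j\tau)\le n_j\|\gamma_{\rho_j}\|_{\ell^1}\le n_j\|\rho_j\|_{\ell^1}^2\le n_j\|\widehat{\vseq_{1,j}}\|_2^2\|\widetilde{\vseq}_{1,j}\|_2^2\le C\,n_jI_j$. For the first sum, passing to the frequency side one gets $\int_{-\pi}^\pi\!\!\int_{-\pi}^\pi|\widehat{\vfonc_{1,j}}(\lambda)|^2|\widetilde{\vfonc}_{1,j}(\mu)|^2\bigl|\sum_{k=0}^{n_j-1}\rme^{\rmi\decim_j k(\lambda+\mu)}\bigr|^2\rmd\lambda\,\rmd\mu$; inserting $|\widehat{\vfonc_{1,j}}(\lambda)|^2\le C\decim_j(1+\decim_j|\lambda-\lambda_{1,j}|)^{-2\delta}$, the substitution $u=\decim_j(\lambda-\lambda_{1,j})$, the fact from~(\ref{eq:integerCondition}) that $\decim_j\lambda_{1,j}\in2\pi\Zset$ (so it disappears from the exponential), and the elementary uniform bound $\int_\Rset(1+|u|)^{-2\delta}\bigl|\sum_{k=0}^{n-1}\rme^{\rmi k(u+\theta)}\bigr|^2\rmd u\le C\,n$, valid for $\delta>1/2$, this is $\le C\,n_j\int_{-\pi}^\pi|\widetilde{\vfonc}_{1,j}|^2\le C\,n_jI_j$. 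Hence $\PVar(n_j^{-1/2}\sum_k\widehat{Z}_{1,j,k}\widetilde{Z}_{1,j,k})\le C\,I_j$, so $\PE|R_j^{(1)}|\le C\,I_j^{1/2}$, and adding the two parts gives~(\ref{eq:RjBound}).

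The delicate point I expect to be the bottleneck is exactly this cross-term variance: a crude bound only gives $O(n_jI_j)$ for $\PVar(n_j^{-1/2}\sum_k\widehat{Z}_{1,j,k}\widetilde{Z}_{1,j,k})$, hence $O(n_j^{1/2}I_j^{1/2})$ for $\PE|R_j^{(1)}|$, which is too weak to be absorbed by~(\ref{eq:BoundvfoncLocaleNonZero}). One genuinely has to exploit both the exact $\ltwo(\prob)$-orthogonality created by the disjoint Fourier supports (to annihilate the mixed covariance) and the polynomial concentration of $|\widehat{\vfonc_{1,j}}|^2$ about $\decim_j\lambda_{1,j}\in2\pi\Zset$ (via the Dirichlet-kernel estimate, a folding argument in the spirit of Lemma~\ref{lem:finiteFolding}) to recover the missing factor $n_j^{-1}$; since the residual then enters every surviving term only through $\|\widetilde{\vseq}_{1,j}\|_2^2=\PE[\widetilde{Z}_{1,j,0}^2]\le C\,I_j$, the estimate can be phrased in terms of $I_j$. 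The bookkeeping around which $\varepsilon$ is used in the cutoff versus in~(\ref{eq:Ijdef}), and the finitely many small $j$ for which $\lambda_{1,j}$ is not yet close to $\lambda_{1,\infty}$, is routine and can be handled by shrinking $\varepsilon$ at the outset.
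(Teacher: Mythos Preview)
Your proposal is correct and follows essentially the same route as the paper: the same Fourier-side indicator truncation, the same split of $R_j$ into a pure-residual term (bounded directly in $\mathrm{L}^1$ by $Cn_j^{1/2}I_j$) and a cross term (bounded through its variance, which is decomposed via the product--cumulant identity into a covariance part and a $\kappa_4$ part, each shown to be $\le C I_j$). The only minor difference is in how the covariance part is handled: the paper passes to the spectral density $\widehat{f}_j$ of the \emph{decimated} process $\widehat{Z}_{1,j,\cdot}$ and shows $\sup_{j,\lambda}\widehat{f}_j(\lambda)<\infty$ via Lemmas~\ref{lem:finiteFolding} and~\ref{lem:DomConvArgumenttt}, whereas you stay on the undecimated side and use the equivalent Dirichlet-kernel estimate---and note that your appeal to~(\ref{eq:integerCondition}) is neither among the proposition's hypotheses nor needed, since your bound is already uniform in $\theta$.
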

\begin{proof}
Let $\localset=[-\lambda_{1,\infty}-\varepsilon,-\lambda_{1,\infty}+\varepsilon]\cup[\lambda_{1,\infty}-\varepsilon,\lambda_{1,\infty}+\varepsilon]$. 
We write
$$
\vfonc_{1,j}(\lambda)=\widehat{\vfonc_{1,j}}(\lambda)+\widetilde{\vfonc_{1,j}}(\lambda),\quad\lambda\in(-\pi,\pi)\;,
$$
where $\widehat{\vfonc_{1,j}}(\lambda)=\1_{\localset}(\lambda)\vfonc_{1,j}(\lambda)$ so
that~(\ref{eq:vfoncWithHatequalsInNeighb}) holds.
We define $\widehat{\vseq_{1,j}}$, $\widetilde{\vseq_{1,j}}$ accordingly, so that 
$\vseq_{1,j}(t)=\widehat{\vseq_{1,j}}(t)+\widetilde{\vseq_{1,j}}(t)$ and, since $\widehat{\vfonc_{1,j}}$ and $\widetilde{\vfonc_{1,j}}$
are in $L^2(-\pi,\pi)$, $\widehat{\vseq_{1,j}}$ and $\widetilde{\vseq_{1,j}}$ are in $l^2(\Zset)$. Hence
$Z_{1,j,k}=\widehat{Z}_{1,j,k} + \widetilde{Z}_{1,j,k}$ with $\widehat{Z}_{1,j,k}$ defined by~(\ref{eq:defZhat}) and
\begin{equation}\label{eq:defZtilde}
\widetilde{Z}_{1,j,k}=\sum_{t\in\Zset}\widetilde{\vseq}_{1,j}(\decim_jk-t)\xi_t.
\end{equation}
Moreover, by~(\ref{eq:unfiBoundvfoncLocale}) and the definition of $\widehat{\vfonc_{1,j}}$,
Condition~(\ref{eq:unfiBoundvfoncWithHat}) holds.

We now show that the remainder $R_j$ defined by~(\ref{eq:Rjdef}) satisfies~(\ref{eq:RjBound}).
Observe that $\widehat{Z}_{1,j,k}$ and $\widetilde{Z}_{1,j,k}$ are centered and, since  
$$
\PE[\widehat{Z}_{1,j,k}\widetilde{Z}_{1,j,k}]=\int_{-\pi}^\pi\widehat{\vfonc_{1,j}}(\lambda)\widetilde{\vfonc_{1,j}}(\lambda)\rmd\lambda=0\;,
$$  
uncorrelated. Thus we get
$\PE[Z_{1,j,k}^{2}]=\PE[\widehat{Z}_{1,j,k}^{2}]+\PE[\widetilde{Z}_{1,j,k}^{2}]$ and hence
the remainder $R_j$ defined by~(\ref{eq:Rjdef}) is 
\begin{equation}
  \label{eq:RPQ}
R_j=P_j+2Q_j\quad\text{with}
\end{equation}
\begin{equation}
  \label{eq:PjQjdef}
P_j=n_j^{-1/2}\sum_{k=0}^{n_j-1}\{\widetilde{Z}_{1,j,k}^{2}-\PE[\widetilde{Z}_{1,j,k}^{2}]\}
\quad\text{and}\quad
Q_j=n_j^{-1/2}\sum_{k=0}^{n_j-1}\widetilde{Z}_{1,j,k}\widehat{Z}_{1,j,k}\;.
\end{equation}
We have
\begin{equation}
   \label{eq:PjBound}
  \PE[|P_j|]\leq 2 n_j^{1/2}\PE[\widetilde{Z}_{1,j,0}^{2}] 
= 2 n_j^{1/2}\sum_{t\in\Zset}\widetilde{\vseq}_{1,j}^2(t)
= 4
n_j^{1/2}\;I_j
\;.  
\end{equation}
by the Parseval Theorem and the definitions of $\widetilde{\vfonc_{1,j}}$ and $I_j$.

Using that  $\widehat{Z}_{1,j,k}$ and $\widetilde{Z}_{1,j,k'}$ are centered and uncorrelated, we have, 
using a standard formula for cumulants of products, for all $k,k'\in\Zset$,
\begin{eqnarray*}
\PCov\left(\widetilde{Z}_{1,j,k}\widehat{Z}_{1,j,k},\widetilde{Z}_{1,j,k'}\widehat{Z}_{1,j,k'}\right) =
 &\PCov\left(\widetilde{Z}_{1,j,k},\widetilde{Z}_{1,j,k'}\right)\PCov\left(\widehat{Z}_{1,j,k},\widehat{Z}_{1,j,k'}\right)\\
& +\mathrm{cum}\left(\widetilde{Z}_{1,j,k},\widehat{Z}_{1,j,k},\widetilde{Z}_{1,j,k'},\widehat{Z}_{1,j,k'}\right) \; .
\end{eqnarray*}
Hence, $\PVar\left(Q_j\right)=A_j+B_j$ where
$$
A_j=n_j^{-1}\sum_{k=0}^{n_j-1}\sum_{k'=0}^{n_j-1}\PCov\left(\widetilde{Z}_{1,j,k},\widetilde{Z}_{1,j,k'}\right)\PCov\left(\widehat{Z}_{1,j,k},\widehat{Z}_{1,j,k'}\right)
$$
and
$$
B_j=n_j^{-1}
\sum_{k=0}^{n_j-1}\sum_{k'=0}^{n_j-1}\mathrm{cum}\left(\widetilde{Z}_{1,j,k},\widehat{Z}_{1,j,k},\widetilde{Z}_{1,j,k'},\widehat{Z}_{1,j,k'}\right)
\;. 
$$
Denote by $\widehat{f}_j$ and $\widetilde{f}_j$ the respective spectral densities of the weakly stationary processes
$\widehat{Z}_{1,j,\centerdot}$ and $\widetilde{Z}_{1,j,\centerdot}$. Replacing the covariances in the defintion of $A_j$
by their respective expressions as Fourier coefficients of the spectral density, \eg 
$\PCov\left(\widetilde{Z}_{1,j,k},\widetilde{Z}_{1,j,k'}\right)=\int_{-\pi}^\pi\rme^{\rmi(k-k')\lambda}\widetilde{f}_j(\lambda)\rmd\lambda$, we get 
$$
A_j=n_j^{-1}\int_{-\pi}^\pi\int_{-\pi}^\pi\widehat{f}_j(\lambda)\widetilde{f}_j(\lambda')\left|\sum_{k=0}^{n_j-1}\rme^{\rmi
    k(\lambda+\lambda')}\right|^2 \;\rmd\lambda\rmd\lambda'\;,
$$
which implies that
\begin{equation}
\label{eq:AjhattildeBound}
0\leq A_j\leq 2\pi \;
\sup_{\lambda\in(-\pi,\pi)}\widehat{f}_j(\lambda)\times\int_{-\pi}^\pi\widetilde{f}_j(\lambda')\rmd\lambda'\;,
\end{equation}
where, in the last inequality, we used that, for any $\lambda'$, $\int_{-\pi}^\pi\left|\sum_{k=0}^{n_j-1}\rme^{\rmi
    k(\lambda+\lambda')}\right|^2\rmd\lambda=2\pi n_j$.
Observe that, by definition of $\widehat{Z}_{1,j,k}$ in \eqref{eq:defZhat},
$$
\PCov\left(\widehat{Z}_{1,j,0},\widehat{Z}_{1,j,k}\right)=
(2\pi)\int_{-\pi}^\pi\left|\widehat{\vfonc_{1,j}}(\lambda)\right|^2\rme^{\rmi\decim_jk\lambda}\;\rmd\lambda
$$
Using Lemma~\ref{lem:finiteFolding} with the $(2\pi)$-periodic function
$g(\lambda)=\left|\widehat{\vfonc_{1,j}}(\lambda)\right|^2\rme^{\rmi\decim_jk\lambda}$, we get 
$$
\PCov\left(\widehat{Z}_{1,j,0},\widehat{Z}_{1,j,k}\right)=
(2\pi)\decim_j^{-1}\int_{-\pi}^\pi\left(\sum_{p=0}^{\decim_j-1}
\left|\widehat{\vfonc_{1,j}}(\decim_j^{-1}(\lambda+2p\pi))\right|^2\right)\;\rme^{\rmi k\lambda}\;\rmd\lambda \; .
$$
Hence we have
$$
\widehat{f}_j(\lambda)=(2\pi)^{-1}\decim_j^{-1}\sum_{p=0}^{\decim_j-1}\left|\widehat{\vfonc_{1,j}}(\decim_j^{-1}(\lambda+2p\pi))\right|^2\;.
$$
Using~(\ref{eq:unfiBoundvfoncWithHat}), since  
$|\decim_j^{-1}(\lambda+2p\pi)|< \pi$ for $p=0,\dots,\decim_j-1$ and $\lambda\in(-\decim_j\pi,-\decim_j\pi+2\pi)$, we get 
$$
\widehat{f}_j(\lambda)\leq
C\;\sum_{p=0}^{\decim_j-1}(1+\left||\lambda+2p\pi|-\decim_j\lambda_{1,j}\right|)^{-2\delta},\quad \lambda\in(-\decim_j\pi,-\decim_j\pi+2\pi)\;.
$$
Using~(\ref{eq:supsupgj}) in Lemma~\ref{lem:DomConvArgumenttt} and that $\widehat{f}_j$ is $(2\pi)$-periodic, we obtain
\begin{equation}
  \label{eq:Bounfwidehatfj}
\sup_{j\geq0}\sup_{\lambda\in(-\pi,\pi)}\widehat{f}_j(\lambda) <\infty \;.  
\end{equation}
Moreover, we have
\begin{equation}
  \label{eq:Bounftildehatfj}
\int_{-\pi}^\pi\widetilde{f}_j(\lambda')\rmd\lambda'=\PVar\left(\widetilde{Z}_{1,j,0}\right)=
\sum_{t\in\Zset}\widetilde{\vseq}_{1,j}^2(t)=\int_{-\pi}^\pi\left|\widetilde{\vfonc_{1,j}}(\lambda)\right|^2\;\rmd\lambda\;,
\end{equation}
by the Parseval Theorem. Hence by~(\ref{eq:AjhattildeBound}), there is a positive constant $C$ such that
\begin{equation}
  \label{eq:AjBOUND}
  \left|A_j\right|\leq C\,\int_{-\pi}^\pi\left|\widetilde{\vfonc_{1,j}}(\lambda)\right|^2\;\rmd\lambda\: .
\end{equation}

We now consider $B_j$. Using \ref{it:A1} and the definitions of $\widetilde{Z}_{1,j,k}$ and $\widehat{Z}_{1,j,k}$ in
\eqref{eq:defZhat} and~(\ref{eq:defZtilde}), we have 
$$
\mathrm{cum}\left(\widetilde{Z}_{1,j,k},\widehat{Z}_{1,j,k},\widetilde{Z}_{1,j,k'},\widehat{Z}_{1,j,k'}\right)
=\kappa_4\sum_{t\in\Zset}\widetilde{\vseq}_{1,j}(\decim_jk-t)\widehat{\vseq}_{1,j}(\decim_jk-t)\widetilde{\vseq}_{1,j}(\decim_jk'-t)\widehat{\vseq}_{1,j}(\decim_jk'-t)\; .
$$
Hence
\begin{align}
\nonumber
|B_j|&\leq \kappa_4\sum_{t,\tau\in\Zset}\left|\widetilde{\vseq}_{1,j}(t)\widehat{\vseq}_{1,j}(t)
\widetilde{\vseq}_{1,j}(t+\decim_j\tau)\widehat{\vseq}_{1,j}(t+\decim_j\tau)\right|\\
\nonumber
&\leq  \kappa_4\left(\sum_{t\in\Zset}\left|\widetilde{\vseq}_{1,j}(t)\widehat{\vseq}_{1,j}(t)\right|\right)^2\\
\nonumber
&\leq
\kappa_4\;\sum_{t\in\Zset}\left|\widetilde{\vseq}_{1,j}(t)\right|^2\times\sum_{t\in\Zset}\left|\widehat{\vseq}_{1,j}(t)\right|^2\\
\label{eq:BjtildeatBound}
&=\kappa_4\;\int_{-\pi}^\pi\left|\widetilde{\vfonc_{1,j}}(\lambda)\right|^2\;\rmd\lambda \times 
\int_{-\pi}^\pi\left|\widehat{\vfonc_{1,j}}(\lambda)\right|^2\;\rmd\lambda
\end{align}
By definition of $\widehat{\vfonc_{1,j}}$ and~(\ref{eq:Bounfwidehatfj}), we have
$$
\int_{-\pi}^\pi\left|\widehat{\vfonc_{1,j}}(\lambda)\right|^2\;\rmd\lambda=\PVar\left(\widehat{Z}_{1,j,0}\right)=
\int_{-\pi}^\pi\widehat{f}_j(\lambda)\;\rmd\lambda\leq C\;,
$$
and hence
\begin{equation}
  \label{eq:BjBOUND}
  B_j\leq C \int_{-\pi}^\pi\left|\widetilde{\vfonc_{1,j}}(\lambda)\right|^2\;\rmd\lambda
\end{equation}
where $C$ denotes a positive constant not depending on $j$.
The bounds~(\ref{eq:AjBOUND}) and~(\ref{eq:BjBOUND}) and the definition of $\widetilde{\vfonc_{1,j}}$ yield
\begin{align*}
\PE\left[Q_j^2\right]=  \PVar(Q_j)=A_j+B_j&\leq 
2C\int_{-\pi}^\pi\left|\widetilde{\vfonc_{1,j}}(\lambda)\right|^2\;\rmd\lambda\\
& \leq 
4C\int_{0}^\pi\1\left(|\lambda-\lambda_{1,\infty}|>\varepsilon\right)
\left|\vfonc_{1,j}(\lambda)\right|^2\;\rmd\lambda=4C\;I_j
\end{align*}
by the definitions of $\widetilde{\vfonc_{1,j}}$ and $I_j$. This, with~(\ref{eq:RPQ}),~(\ref{eq:PjBound})
and Jensen's inequality yields
(\ref{eq:RjBound}), which concludes the proof.
\end{proof}
\begin{proof}[Proof of Theorem~\ref{thm:CLTlinearLocale}]
The general case can easily be adapted from the case $\dimV=1$, which we assume here. 
We apply Proposition~\ref{prop:ApproxLocalCondition}.
It follows from~(\ref{eq:vfoncWithHatequalsInNeighb}) and~(\ref{eq:unfiBoundvfoncWithHat}) that the assumptions of
Theorem~\ref{thm:CLTlinear} are verified for $\widehat{Z}_{1,j,k}$ and we obtain 
$$
n_j^{-1/2}\left(\sum_{k=0}^{n_j-1} \{\widehat{Z}_{1,j,k}^{2} -\PE[\widehat{Z}_{1,j,k}^{2}]\}
\right)\cl \calN(0,\Gamma)\;.
$$
It follows from~(\ref{eq:BoundvfoncLocaleNonZero}), that $R_j\cp0$ as $j \to\infty$,. Hence~(\ref{eq:Rjdef}) yields the
CLT~(\ref{eq:CenteredZ}), which concludes the proof. 
\end{proof}

\section{Proof of Theorem~\ref{thm:appl-spectr-dens}}
\label{sec:proof-theor-appli}

We shall use the following lemmas.
\begin{lemma}\label{lem:BiasControl}
Assume~\ref{item:betaAssump} and~\ref{item:appl-spectr-dens}. For $\varepsilon>0$ small enough, if $2\beta-1>2$, then
\begin{equation}
  \label{eq:BiasControl}
 \left|\int_{-\varepsilon}^{\varepsilon} \decim_j\left|\hat{W}(\decim_j\lambda)\right|^2\,f(\lambda)
   \; \rmd\lambda - f(0)\right| = O\left(\decim_j^{-2}\right) \; .
\end{equation}
\end{lemma}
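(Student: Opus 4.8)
The plan is to insert $f(\lambda)=f(0)+\bigl(f(\lambda)-f(0)\bigr)$ into the integral and treat the two resulting pieces separately, in each case performing the change of variable $\xi=\decim_j\lambda$, so that $\int_{-\varepsilon}^{\varepsilon}\decim_j|\hat{W}(\decim_j\lambda)|^2\,g(\lambda)\,\rmd\lambda=\int_{-\decim_j\varepsilon}^{\decim_j\varepsilon}|\hat{W}(\xi)|^2\,g(\xi/\decim_j)\,\rmd\xi$. I fix $\varepsilon>0$ small enough that the expansion in~\ref{item:betaAssump} yields $|f(\lambda)-f(0)|\leq c_1\lambda^2$ for $|\lambda|\leq\varepsilon$; from~\ref{item:appl-spectr-dens} I also use $|\hat{W}(\xi)|\leq c_2(1+|\xi|)^{-\beta}$ for all $\xi\in\Rset$.

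For the constant part, the substitution together with the normalization~(\ref{eq:WhatNorm}) gives
\[
\int_{-\varepsilon}^{\varepsilon}\decim_j|\hat{W}(\decim_j\lambda)|^2\,f(0)\,\rmd\lambda
=f(0)\int_{-\decim_j\varepsilon}^{\decim_j\varepsilon}|\hat{W}(\xi)|^2\,\rmd\xi
=f(0)-f(0)\int_{|\xi|>\decim_j\varepsilon}|\hat{W}(\xi)|^2\,\rmd\xi\;.
\]
The tail term is at most $c_2^2\int_{|\xi|>\decim_j\varepsilon}(1+|\xi|)^{-2\beta}\,\rmd\xi=O\bigl((\decim_j\varepsilon)^{1-2\beta}\bigr)$, which is $O(\decim_j^{-2})$ since $1-2\beta<-2$ by the hypothesis $2\beta-1>2$. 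Hence this piece equals $f(0)+O(\decim_j^{-2})$.

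For the remainder part I bound $|f(\lambda)-f(0)|\leq c_1\lambda^2$ on $[-\varepsilon,\varepsilon]$ and apply the same substitution:
\[
\left|\int_{-\varepsilon}^{\varepsilon}\decim_j|\hat{W}(\decim_j\lambda)|^2\bigl(f(\lambda)-f(0)\bigr)\,\rmd\lambda\right|
\leq c_1\decim_j^{-2}\int_{-\decim_j\varepsilon}^{\decim_j\varepsilon}|\hat{W}(\xi)|^2\,\xi^2\,\rmd\xi
\leq c_1\,c_2^2\,\decim_j^{-2}\int_{-\infty}^{\infty}(1+|\xi|)^{-2\beta}\xi^2\,\rmd\xi\;.
\]
The last integral converges precisely because $2\beta-2>1$, i.e. $2\beta-1>2$, which is exactly the standing assumption; thus this piece is $O(\decim_j^{-2})$, and adding the two estimates gives~(\ref{eq:BiasControl}). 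There is no serious obstacle here; the only point worth flagging is that the single hypothesis $2\beta-1>2$ does double duty, making the $\ltwo$-tail of $\hat{W}$ beyond $\decim_j\varepsilon$ negligible at rate $o(\decim_j^{-2})$ while simultaneously ensuring finiteness of the second moment $\int_{-\infty}^{\infty}|\hat{W}(\xi)|^2\xi^2\,\rmd\xi$ needed to convert the $O(|\lambda|^2)$ local smoothness of $f$ from~\ref{item:betaAssump} into the claimed $O(\decim_j^{-2})$ bias rate.
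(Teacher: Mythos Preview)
Your proof is correct and follows essentially the same approach as the paper: split $f(\lambda)=f(0)+O(|\lambda|^2)$, handle the constant piece via the normalization~(\ref{eq:WhatNorm}) plus a tail estimate of order $\decim_j^{1-2\beta}$, and control the $O(\lambda^2)$ piece using $\int|\hat{W}(\xi)|^2\xi^2\,\rmd\xi<\infty$. The paper is slightly terser and does not spell out the substitution $\xi=\decim_j\lambda$, but the structure and estimates are identical.
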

\begin{proof}
Using~(\ref{eq:fSmooth}),  for $\varepsilon>0$ small enough, the left-hand side of~(\ref{eq:BiasControl}) is at most 
\begin{equation}
  \label{eq:LHSbiascontrol}
  \left|f(0)\,\int_{-\varepsilon}^{\varepsilon} \decim_j\left|\hat{W}(\decim_j\lambda)\right|^2 \; \rmd\lambda - f(0)\right| 
+ C\left|\int_{-\varepsilon}^{\varepsilon} \decim_j\left|\hat{W}(\decim_j\lambda)\right|^2 \,\lambda^2\; \rmd\lambda \right| \;,
\end{equation}
where $C$ is a positive constant.
To evaluate the first integral in~(\ref{eq:LHSbiascontrol}) we write $\int_{-\varepsilon}^{\varepsilon}=\int_{-\infty}^\infty-
\int_{-\infty}^{-\varepsilon}-\int_{\varepsilon}^\infty$. Using~\ref{item:appl-spectr-dens}, one gets
$$
\int_{-\varepsilon}^{\varepsilon} 
\decim_j\left|\hat{W}(\decim_j\lambda)\right|^2  \;\rmd\lambda =1 +O\left(\decim_j^{1-2\beta} \right)\; .
$$
The second integral in~(\ref{eq:LHSbiascontrol})  is bounded by
$$
\int_{-\infty}^{\infty}\decim_j\left|\hat{W}(\decim_j\lambda)\right|^2\,
 \lambda^2\; \rmd\lambda \leq C' \decim_j^{-2} \; ,
$$
where $C'=\int_{-\infty}^{\infty}\left|\hat{W}(\lambda)\right|^2\ \lambda^2\; \rmd\lambda<\infty$
by~\ref{item:appl-spectr-dens}. 
\end{proof}
\begin{lemma}\label{lem:BijApproxBound}
Assume~\ref{item:appl-spectr-dens} and define $B_j$ as in~(\ref{eq:defBij}). Then there is a positive constant $C$, such
that, for all $j\geq0$ and $\lambda\in(-\pi,\pi)$,
\begin{align}
  \label{eq:BijApprox}
\left|\decim_j^{-1/2}B_{j}(\lambda) - \hat{W}(\decim_j\lambda)\right| & \leq C
\;\decim_j^{-\beta} \;,\\
  \label{eq:BijSquareApprox}
\left|\left|B_{j}(\lambda)\right|^2 - \decim_j\left|\hat{W}(\decim_j\lambda)\right|^2\right| & \leq
C\,\left[\decim_j^{1-\beta}\,\left|\hat{W}(\decim_j\lambda)\right|
+\decim_j^{1-2\beta}\right]\;,
\end{align}
and, for any positive $\epsilon$,
\begin{equation}
  \label{eq:BijBound}
\sup_{\lambda\in[0,\pi)}\1(|\lambda|>\epsilon)\;\left|B_{j}(\lambda) \right| =
O(\decim_j^{1/2-\beta}) \;.
\end{equation}
\end{lemma}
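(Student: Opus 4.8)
The plan is to recognise $\decim_j^{-1/2}B_j(\lambda)$ as a sampling approximation of $\hat{W}(\decim_j\lambda)$ and to control the error not through the smoothness of $W$ but through the decay of $\hat{W}$, by means of the aliasing (Poisson summation) formula. From~(\ref{eq:defBij}), $\decim_j^{-1/2}B_j(\lambda)=\decim_j^{-1}\sum_{u\in\Zset}W(\decim_j^{-1}u)\,\rme^{-\rmi\lambda u}$, i.e.\ $\decim_j^{-1}$ times the discrete-time Fourier transform of the sequence $u\mapsto W(\decim_j^{-1}u)$, which is $W(\decim_j^{-1}\cdot)$ sampled on $\Zset$. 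The first step is to justify Poisson summation here: by~\ref{item:appl-spectr-dens}, $\hat{W}\in\mathrm{L}^1(\Rset)$ (since $\beta>1$), so $W$ admits the continuous representative $t\mapsto(2\pi)^{-1}\int\hat{W}(\xi)\rme^{\rmi\xi t}\,\rmd\xi$; being continuous and supported in $[-1,0]$, and with $\hat{W}$ summable along $2\pi\decim_j\Zset$, the function $t\mapsto W(\decim_j^{-1}t)\rme^{-\rmi\lambda t}$ (whose Fourier transform is $\xi\mapsto\decim_j\hat{W}(\decim_j(\lambda+\xi))$) meets the hypotheses of the Poisson summation formula. This yields, for every $\lambda\in\Rset$,
\begin{equation}
\label{eq:aliasBj}
\decim_j^{-1/2}B_j(\lambda)=\sum_{p\in\Zset}\hat{W}\bigl(\decim_j(\lambda+2\pi p)\bigr)\;,
\end{equation}
the series converging absolutely, with the term $p=0$ equal to $\hat{W}(\decim_j\lambda)$.

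Granting~(\ref{eq:aliasBj}), the three bounds become routine estimation. For~(\ref{eq:BijApprox}) I subtract the $p=0$ term and estimate the tail: for $|\lambda|\leq\pi$ and $|p|\geq1$ one has $|\lambda+2\pi p|\geq\pi|p|$, so, with $|\hat{W}(\xi)|\leq C(1+|\xi|)^{-\beta}$ from~\ref{item:appl-spectr-dens},
$$
\Bigl|\decim_j^{-1/2}B_j(\lambda)-\hat{W}(\decim_j\lambda)\Bigr|\leq C\sum_{|p|\geq1}(\pi\decim_j|p|)^{-\beta}=C\pi^{-\beta}\decim_j^{-\beta}\sum_{|p|\geq1}|p|^{-\beta}=O(\decim_j^{-\beta})\;,
$$
uniformly in $\lambda\in(-\pi,\pi)$, since $\beta>1$; this is~(\ref{eq:BijApprox}). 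Writing then $B_j(\lambda)=\decim_j^{1/2}\bigl(\hat{W}(\decim_j\lambda)+r_j(\lambda)\bigr)$ with $\sup_{|\lambda|<\pi}|r_j(\lambda)|\leq C\decim_j^{-\beta}$, the elementary identity $|a+r|^2-|a|^2=2\,\mathrm{Re}(a\bar r)+|r|^2$ with $a=\hat{W}(\decim_j\lambda)$ gives
$$
\Bigl|\,|B_j(\lambda)|^2-\decim_j|\hat{W}(\decim_j\lambda)|^2\,\Bigr|\leq 2C\decim_j^{1-\beta}|\hat{W}(\decim_j\lambda)|+C^2\decim_j^{1-2\beta}\;,
$$
which is~(\ref{eq:BijSquareApprox}). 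Finally, for~(\ref{eq:BijBound}) restrict to $\lambda\in(\epsilon,\pi)$: in~(\ref{eq:aliasBj}) the term $p=0$ is now also small, $|\hat{W}(\decim_j\lambda)|\leq C(1+\decim_j\epsilon)^{-\beta}=O(\decim_j^{-\beta})$, while the remaining terms are $O(\decim_j^{-\beta})$ exactly as above; hence $\sup_{\epsilon<\lambda<\pi}|B_j(\lambda)|\leq\decim_j^{1/2}\,O(\decim_j^{-\beta})=O(\decim_j^{1/2-\beta})$, which is~(\ref{eq:BijBound}).

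I expect the only genuinely delicate point to be the rigorous justification of the aliasing identity~(\ref{eq:aliasBj}); once it is in place, everything else is bookkeeping, and the key conceptual point is that a naive Riemann-sum error bound would be useless because of the fast oscillation of $\rme^{-\rmi\decim_j\lambda t}$ when $\decim_j\lambda$ is large, whereas~(\ref{eq:aliasBj}) turns the error into genuinely small aliasing terms governed by the decay of $\hat{W}$. As a fallback that sidesteps Poisson summation one can use that $\mathrm{supp}\,W\subseteq[-1,0]$ makes the defining sum of $B_j(\lambda)$ finite (at most $\decim_j+1$ terms) and compare it interval by interval with $\hat{W}(\decim_j\lambda)=\sum_u\int_{\decim_j^{-1}u}^{\decim_j^{-1}(u+1)}W(t)\rme^{-\rmi\decim_j\lambda t}\,\rmd t$; but this requires extra regularity of $W$ and does not exploit the decay of $\hat{W}$, so I would keep the Poisson/aliasing argument as the main line.
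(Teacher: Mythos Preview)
Your argument is correct in substance and matches the paper's proof almost line for line, but you have misread the definition of $B_j$. In~(\ref{eq:defBij}) the paper \emph{defines}
\[
B_j(\lambda)=\sum_{p\in\Zset}\decim_j^{1/2}\,\hat{W}\bigl(\decim_j(\lambda+2p\pi)\bigr)\;,
\]
so your formula~(\ref{eq:aliasBj}) is not something to be proved---it \emph{is} the definition (divided by $\decim_j^{1/2}$). The Poisson summation step you spend effort justifying is therefore unnecessary here; it does appear in the paper, but in the proof of the next lemma (Lemma~\ref{lem:SpecDensEstConditions}), where the discrete Fourier series $\sum_u W(\decim_j^{-1}u)\rme^{-\rmi\lambda u}$ arising from~(\ref{eq:vfoncSpecDef}) is identified with $\decim_j^{1/2}B_j(\lambda)$.

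Once this is set straight, your three estimates coincide with the paper's: isolate the $p=0$ term, bound the tail $\sum_{p\neq0}$ by $O(\decim_j^{1/2-\beta})$ using $|\hat W(\xi)|\leq C(1+|\xi|)^{-\beta}$ (the paper writes the bound as $C\decim_j^{1/2}\sum_{p>0}(1+(2p-1)\decim_j\pi)^{-\beta}$, you use $|\lambda+2\pi p|\geq\pi|p|$; both work), then deduce~(\ref{eq:BijSquareApprox}) from the elementary inequality $\bigl||z_1|^2-|z_2|^2\bigr|\leq 2|z_2|\,|z_1-z_2|+|z_1-z_2|^2$, and~(\ref{eq:BijBound}) by observing that on $|\lambda|>\epsilon$ the $p=0$ term is also $O(\decim_j^{-\beta})$.
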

\begin{proof}
By~(\ref{eq:defBij}), we have $B_{j}(\lambda) =\decim_j^{1/2}\hat{W}(\decim_j\lambda)
+ R_{j}(\lambda)$, where
\begin{equation*}
R_{j}(\lambda) = \sum_{p\neq0}\decim_j^{1/2}\hat{W}(\decim_j(\lambda+2p\pi))\;.
\end{equation*}
Using~\ref{item:appl-spectr-dens}, since $\beta>1$, we have
\begin{equation}
  \label{eq:RijBound}
\sup_{\lambda\in(-\pi,\pi)}\left|R_{j}(\lambda)\right|\leq C\;\decim_j^{1/2}\sum_{p>0}(1+(2p-1)\decim_j\pi)^{-\beta}
=O(\decim_j^{1/2-\beta})\;.
\end{equation}
The bound~(\ref{eq:RijBound}) gives~(\ref{eq:BijApprox}), which yield~(\ref{eq:BijBound}) by
using~\ref{item:appl-spectr-dens}.
The bound~(\ref{eq:BijSquareApprox}) follows from~(\ref{eq:BijApprox}) and 
$$
\left||z_1|^2-|z_2|^2\right|\leq
2|z_2|\times|z_1-z_2|+|z_1-z_2|^2
$$
applied with $z_1=B_{j}(\lambda)$ and $z_2=\decim_j^{1/2}\hat{W}(\decim_j\lambda)$. 
\end{proof}

\begin{lemma}\label{lem:SpecDensEstConditions}
Suppose that the assumptions of Theorem~\ref{thm:appl-spectr-dens} hold. Then for some arbitrary small $\varepsilon>0$,
\begin{equation}
  \label{eq:SpecEstUnifLocalBound}
  \sup_{j\geq0}\sup_{|\lambda|\leq\varepsilon}\decim_j^{-1/2}\left|\vfonc_{1,j}(\lambda)\right|(1+\decim_j|\lambda|)^{\beta}<\infty\;,
\end{equation}
\begin{equation}
  \label{eq:SpecEstVfoncLim}
  \lim_{j\to\infty}\decim_j^{-1/2}\vfonc_{1,j}(\decim_j^{-1}\lambda)=\vfonc_{1,\infty}(\lambda)
\quad\text{for all}\quad\lambda\in\Rset\;,
\end{equation}
with
\begin{equation}
  \label{eq:vfoncLimEstSpec}
  \vfonc_{1,\infty}(\lambda) = \afonc(0)\hat{W}(\lambda),\quad\lambda\in\Rset\;,
\end{equation}
and, as $j\to\infty$,
\begin{equation}
\label{eq:BoundvfoncLocalBeta}
\int_{0}^\pi\1(|\lambda|>\varepsilon)\;
|\vfonc_{1,j}(\lambda)|^2\;\rmd\lambda  =O\left(\decim_j^{1-2\beta}\right)\;.
\end{equation}
\end{lemma}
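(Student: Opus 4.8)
The plan is to express $\vfonc_{1,j}$ through the transfer function of $X$ and the Fourier transform of $W$, and then to read off all three properties from the bounds on $B_{j}$ already established in Lemma~\ref{lem:BijApproxBound}. The starting point is the identity recorded in~(\ref{eq:defBij}), namely $\vfonc_{1,j}(\lambda)=B_{j}(\lambda)\,\afonc(\lambda)$ for $\lambda\in(-\pi,\pi)$, whence $|\vfonc_{1,j}(\lambda)|^{2}=|B_{j}(\lambda)|^{2}f(\lambda)$. I would also record two elementary facts about $f=|\afonc|^{2}$: by~(\ref{eq:fSmooth}) it is bounded on a small enough neighbourhood $\{|\lambda|\le\varepsilon\}$ of the origin, and it is integrable on $(-\pi,\pi)$, since $\int_{-\pi}^{\pi}f(\lambda)\,\rmd\lambda=\sum_{t}\aseq^{2}(t)<\infty$.

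For~(\ref{eq:SpecEstUnifLocalBound}) I would start from~(\ref{eq:BijApprox}), which gives $\decim_{j}^{-1/2}|B_{j}(\lambda)|\le|\hat{W}(\decim_{j}\lambda)|+C\decim_{j}^{-\beta}$ uniformly in $\lambda\in(-\pi,\pi)$ and $j$; multiplying by $|\afonc(\lambda)|(1+\decim_{j}|\lambda|)^{\beta}$ and restricting to $\{|\lambda|\le\varepsilon\}$, the term $|\hat{W}(\decim_{j}\lambda)|(1+\decim_{j}|\lambda|)^{\beta}$ is controlled by the polynomial-decay hypothesis $\sup_{\xi}|\hat{W}(\xi)|(1+|\xi|)^{\beta}<\infty$ of~\ref{item:appl-spectr-dens}, the term $\decim_{j}^{-\beta}(1+\decim_{j}|\lambda|)^{\beta}=(\decim_{j}^{-1}+|\lambda|)^{\beta}$ is at most $(1+\varepsilon)^{\beta}$ since $\decim_{j}\ge1$, and $|\afonc(\lambda)|=f(\lambda)^{1/2}$ is bounded there. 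For the rescaled limit~(\ref{eq:SpecEstVfoncLim})--(\ref{eq:vfoncLimEstSpec}) I would write $\decim_{j}^{-1/2}\vfonc_{1,j}(\decim_{j}^{-1}\lambda)=\bigl(\decim_{j}^{-1/2}B_{j}(\decim_{j}^{-1}\lambda)\bigr)\afonc(\decim_{j}^{-1}\lambda)$: by~(\ref{eq:BijApprox}) the first factor is $\hat{W}(\lambda)+O(\decim_{j}^{-\beta})\to\hat{W}(\lambda)$, while $\decim_{j}^{-1}\lambda\to0$ and continuity of $\afonc$ at the origin give $\afonc(\decim_{j}^{-1}\lambda)\to\afonc(0)$, so the product converges to $\vfonc_{1,\infty}(\lambda)=\afonc(0)\hat{W}(\lambda)$. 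For the tail bound~(\ref{eq:BoundvfoncLocalBeta}) I would combine $|\vfonc_{1,j}|^{2}=|B_{j}|^{2}f$ with~(\ref{eq:BijBound}), namely $\sup_{\lambda\in[0,\pi)}\1(|\lambda|>\varepsilon)|B_{j}(\lambda)|=O(\decim_{j}^{1/2-\beta})$, and with $\int_{0}^{\pi}f<\infty$, to get $\int_{0}^{\pi}\1(|\lambda|>\varepsilon)|\vfonc_{1,j}(\lambda)|^{2}\,\rmd\lambda\le\bigl(\sup_{\lambda\in[0,\pi)}\1(|\lambda|>\varepsilon)|B_{j}(\lambda)|\bigr)^{2}\int_{0}^{\pi}f(\lambda)\,\rmd\lambda=O(\decim_{j}^{1-2\beta})$.

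The only step that calls for genuine care is the identity $\vfonc_{1,j}=B_{j}\afonc$ behind~(\ref{eq:defBij}): it is obtained by interchanging the two sums in $Z_{1,j,k}=\decim_{j}^{-1/2}\sum_{u}W(k-\decim_{j}^{-1}u)\sum_{t}\aseq(u-t)\xi_{t}$ — permissible because $W$ is compactly supported, so only finitely many $u$ contribute and the resulting kernel $\vseq_{1,j}$ is a finite linear combination of shifts of $\aseq$, hence in $\ell^{2}$ — and then by Poisson's summation formula to rewrite $\sum_{w}W(-\decim_{j}^{-1}w)\rme^{\rmi\lambda w}$ as $\decim_{j}\sum_{p\in\Zset}\hat{W}(\decim_{j}(\lambda+2p\pi))$, which is exactly $\decim_{j}^{1/2}B_{j}(\lambda)$ in the notation of Lemma~\ref{lem:BijApproxBound}; everything past this identity is the routine bookkeeping indicated above. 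One caveat worth noting: the continuity of $\afonc$ at $0$ used for~(\ref{eq:vfoncLimEstSpec}) is part of the standing setup, but since $\dimV=1$ any residual phase of $\afonc(\decim_{j}^{-1}\lambda)$ could instead be carried by the phase function $\vphase_{j}$ allowed in~(\ref{eq:Limitvfonc}), without changing the limiting covariance.
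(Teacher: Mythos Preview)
Your proof is correct and follows essentially the same route as the paper: derive the factorization $\vfonc_{1,j}(\lambda)=\afonc(\lambda)B_j(\lambda)$ via the Poisson summation formula, then read off~(\ref{eq:SpecEstUnifLocalBound}) from~(\ref{eq:BijApprox}) and the decay of $\hat W$, (\ref{eq:SpecEstVfoncLim}) from~(\ref{eq:BijApprox}) and the behaviour of $\afonc$ at the origin, and~(\ref{eq:BoundvfoncLocalBeta}) from~(\ref{eq:BijBound}) together with $\int f<\infty$. Two minor remarks: the factorization you invoke is~(\ref{eq:vfoncSpecEst}) rather than~(\ref{eq:defBij}) (which merely defines $B_j$), and your closing caveat about the phase of $\afonc$ is in fact more careful than the paper, which simply appeals to~\ref{item:betaAssump} for $\afonc(\decim_j^{-1}\lambda)\to\afonc(0)$.
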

\begin{proof}
We have, for all $u\in\Zset$,
\begin{align*}
  W(\decim_j^{-1}u)&=\frac1{2\pi}\int_{-\infty}^\infty \hat{W}(\xi)\;\rme^{\rmi \decim_j^{-1}u\xi}\;\rmd\xi\\
&=\frac1{2\pi}\int_{-\pi}^\pi \left[\sum_{p\in\Zset}\decim_j\hat{W}(\decim_j(\lambda+2p\pi))\right]
\;\rme^{\rmi u\lambda}\;\rmd\lambda\;,
\end{align*}
hence the term in brackets is the Fourier series of $\{W(\decim_j^{-1}u),\;u\in\Zset\}$ and thus
$$
\sum_{u\in\Zset}W(\decim_j^{-1}u)\rme^{-\rmi\lambda u}=\sum_{p\in\Zset}\decim_j\hat{W}(\decim_j(\lambda+2p\pi))\;,
$$
which is some times called the Poisson formula. Inserting this in~(\ref{eq:vfoncSpecDef}), we obtain
\begin{equation}
  \label{eq:vfoncSpecEst}
\vfonc_{1,j}(\lambda) =\afonc(\lambda)\; B_{j}(\lambda),\quad \lambda\in(-\pi,\pi)\;,
\end{equation}
where $B_{j}$ is a $(2\pi)$--periodic function defined by
\begin{equation}
  \label{eq:defBij}
  B_{j}(\lambda) = \sum_{p\in\Zset}\decim_j^{1/2}\hat{W}(\decim_j(\lambda+2p\pi)) \; .
\end{equation}

Applying~(\ref{eq:vfoncSpecEst}) and~(\ref{eq:BijApprox}) in Lemma~\ref{lem:BijApproxBound},~\ref{item:appl-spectr-dens} and
that $|\afonc(\lambda)|$ is bounded in a neighborhood of the origin by~\ref{item:betaAssump}, we get, for some
arbitrary small $\varepsilon>0$, if $|\lambda|\leq\varepsilon$,
$$
\decim_j^{-1/2}|\vfonc_{1,j}(\lambda)|\leq C|\hat{W}(\decim_j\lambda)|+O\left(\decim_j^{-\beta}\right)
\leq C\left(1+\decim_j|\lambda|\right)^{-\beta}+O\left(\decim_j^{-\beta}\right)\;,
$$
where $C$ and the $O$-term do not depend on $\lambda$, which implies~(\ref{eq:SpecEstUnifLocalBound}).
 
Applying~(\ref{eq:vfoncSpecEst}) and~(\ref{eq:BijApprox}) in Lemma~\ref{lem:BijApproxBound}, we have,  as $j\to\infty$,
$$
\decim_j^{-1/2}\vfonc_{1,j}(\decim_j^{-1}\lambda)=
\afonc(\decim_j^{-1}\lambda)\hat{W}(\lambda)+O(\decim_j^{-\beta})
\to\afonc(0)\hat{W}(\lambda),\quad\lambda\in\Rset\;,
$$
where the limit holds by~\ref{item:betaAssump}. This gives~(\ref{eq:SpecEstVfoncLim}).

Applying~(\ref{eq:vfoncSpecEst}),~(\ref{eq:BijBound}) in Lemma~\ref{lem:BijApproxBound} and
$\int_{-\pi}^\pi|\afonc(\lambda)|^2\rmd\lambda<\infty$, we obtain~(\ref{eq:BoundvfoncLocalBeta}). 
\end{proof}

\begin{proof}[Proof of Theorem~\ref{thm:appl-spectr-dens}]
By~(\ref{eq:Xdef}) and~(\ref{eq:ZestSpecDef}), we have
$$
Z_{1,j,k}=\sum_{t\in\Zset}\decim_j^{-1/2}\sum_{v\in\Zset}W(k - \decim_j^{-1}t - \decim_j^{-1} v )\aseq(u-t)\,\xi_t
=\sum_{t\in\Zset}\vseq_{1,j}(\decim_j k-t)\,\xi_t\;,
$$
where
$$
\vseq_{1,j}(s) = \decim_j^{-1/2}\sum_{v\in\Zset}W(\decim_j^{-1}(s-v))\,\aseq(v),\quad s\in\Zset \;.
$$
Thus $\{Z_{1,j,t},\;j\geq0,t\in\Zset\}$ is an array of one-dimensional decimated linear processes as
in Definition~\ref{def:decimatedlinearprocess}. Moreover, the Fourier series~(\ref{eq:vfoncdef}) of $\vseq_{1,j}(s)$ is
\begin{equation}
  \label{eq:vfoncSpecDef}
\vfonc_{1,j}(\lambda) =\decim_j^{-1/2}\,\afonc(\lambda)\;\sum_{u\in\Zset}W(\decim_j^{-1}(u))\rme^{-\rmi\lambda u},\quad \lambda\in(-\pi,\pi)\;. 
\end{equation}
We let $\dimV=1$ and $\lambda_{1,j}=\lambda_{1,\infty}=0$ for all $j\geq0$, which yields (\ref{eq:integerCondition}),~(\ref{eq:Limitfreq}),~(\ref{eq:CondFreqEqual0orDifferent})
and~(\ref{eq:CondFreqEqualOrDifferent}) in Condition~\ref{item:FourierConditions}.
In view of Lemma~\ref{lem:SpecDensEstConditions}, Relation~(\ref{eq:Limitvfonc}) in Condition~\ref{item:FourierConditions} holds, as well as
Relations~(\ref{eq:unfiBoundvfoncLocale}) and~(\ref{eq:BoundvfoncLocaleNonZero}) in Theorem~\ref{thm:CLTlinearLocale} (recall
that in that theorem, Relation~(\ref{eq:unfiBoundvfoncLocale}) replaces Relation~(\ref{eq:unfiBoundvfonc}) in
Condition~\ref{item:FourierConditions}). Hence we may apply  Theorem~\ref{thm:CLTlinearLocale}.

We are now in a position to show first~(\ref{eq:limitEsp}) then~(\ref{eq:nCLTCondEstSpec}). 
Applying~(\ref{eq:covZ0Z1}), we have
\begin{equation}
  \label{eq:VarZSpecEst}
  \PE\left[Z_{1,j,0}^2\right] = \PVar\left(Z_{1,j,0}\right)
=\int_{-\pi}^{\pi} \left|\vfonc_{1,j}(\lambda)\right|^2\,\rmd\lambda\;.
\end{equation}
Using~(\ref{eq:vfoncSpecEst}),~(\ref{eq:BoundvfoncLocalBeta}) and then~(\ref{eq:BijSquareApprox}), this gives for any
$\epsilon>0$ small enough
\begin{align*}
\PE\left[Z_{1,j,0}^2\right] & =\int_{-\epsilon}^{\epsilon} \left|\afonc(\lambda)\,B_j(\lambda)\right|^2\,\rmd\lambda
+O\left(\decim_j^{1-2\beta}\right)\\
&= \int_{-\epsilon}^{\epsilon} \decim_j\left|\afonc(\lambda)\hat{W}(\decim_j\lambda)\right|^2\,\rmd\lambda
+C\,\int_{-\epsilon}^{\epsilon}
\decim_j^{1-\beta}\,\left|\hat{W}(\decim_j\lambda)\right|\rmd\lambda
+O\left(\decim_j^{1-2\beta}\right)\;.
\end{align*}
In the last line, since $\left|\afonc(\lambda)\right|^2=f(\lambda)$, by Lemma~\ref{lem:BiasControl}, the first term is
$f(0)+O\left(\decim_j^{-2}\right)$ and, by a change of variable, the second term is less than
$C\decim_j^{-\beta}\|\hat{W}\|_1$. 
Hence~(\ref{eq:limitEsp}) follows since $\beta>2$.

The bound~(\ref{eq:BoundvfoncLocalBeta}) yields~(\ref{eq:BoundvfoncLocaleNonZero}) under the condition $\decim_j^{1-2\beta}=o(n_j^{-1/2})$. 
Since $n_j$ is given by~(\ref{eq:njDefEstSpec}), the assumption~(\ref{eq:njCondEstSpec}) implies that
condition. Since~\ref{it:A1} holds as well, we may apply Theorem~\ref{thm:CLTlinearLocale} with $j=n$ and
obtain~(\ref{eq:nCLTCondEstSpec}). 
\end{proof}

\bibliography{lrd}

\begin{thebibliography}{1}

\bibitem{anderson:1994}
T.~W. Anderson.
\newblock {\em The statistical analysis of time series}.
\newblock John Wiley \& Sons Inc., New York, 1994.

\bibitem{bhansali:giraitis:kokoszka:2007}
R.~J. Bhansali, L.~Giraitis, and P.~S. Kokoszka.
\newblock Approximations and limit theory for quadratic forms of linear
  processes.
\newblock {\em Stochastic Process. Appl.}, 117(1):71--95, 2007.

\bibitem{billingsley:1999}
P.~Billingsley.
\newblock {\em Convergence of probability measures}.
\newblock Wiley Series in Probability and Statistics: Probability and
  Statistics. John Wiley \& Sons Inc., New York, second edition, 1999.
\newblock A Wiley-Interscience Publication.

\bibitem{lang:soulier:1998}
G.~Lang and P.~Soulier.
\newblock Convergence de mesures spectrales al\'eatoires et applications \`a
  des principes d'invariance.
\newblock {\em Stat. Inference Stoch. Process.}, 3(1-2):41--51, 2000.
\newblock 19th ``Rencontres Franco-Belges de Statisticiens'' (Marseille, 1998).

\bibitem{mallat:1998}
S.~Mallat.
\newblock {\em A wavelet tour of signal processing}.
\newblock Academic Press Inc., San Diego, CA, 1998.

\bibitem{moulines-roueff-taqqu-2007a}
E.~Moulines, F.~Roueff, and M.S. Taqqu.
\newblock On the spectral density of the wavelet coefficients of long memory
  time series with application to the log-regression estimation of the memory
  parameter.
\newblock {\em J. Time Ser. Anal.}, 28(2), 2007.
\newblock
  \href{http://www.blackwell-synergy.com/doi/abs/10.1111/j.1467-9892.2006.0050%
2.x?prevSearch=allfield%3A%28roueff%29}{[DOI]}.

\bibitem{roueff:taqqu:2008b}
F.~Roueff and M.~S. Taqqu.
\newblock Asymptotic normality of wavelet estimators of the memory parameter
  for linear processes.
\newblock Technical report, HAL/arxiv, 2008.

\end{thebibliography}
\bibliographystyle{plain}

\end{document}